\numberwithin{equation}{section}
\newtheorem{maintheorem}{Theorem}
\newtheorem{maincoro}[maintheorem]{Corollary}
\newtheorem{theorem}{Theorem}[section]
\newtheorem*{theorem*}{Theorem}
\newtheorem{lemma}[theorem]{Lemma}
\newtheorem{claim}[theorem]{Claim}
\newtheorem{corollary}[theorem]{Corollary}
\theoremstyle{definition}{

\newtheorem*{example*}{Example}
\newtheorem{definition}[theorem]{Definition}
\newtheorem*{definition*}{Definition}

\newtheorem{remark}[theorem]{Remark}
\newtheorem*{remark*}{Remark}
}
\newcommand{\C}{\mathbb C}
\newcommand{\Q}{\mathbb Q}
\newcommand{\Z}{\mathbb Z}
\newcommand{\deq}{:=}
\newcommand{\E}{\mathbb{E}}
\renewcommand{\P}{\mathbb{P}}
\newcommand{\Bin}{\operatorname{Bin}}
\newcommand{\one}{\mathbbm{1}}
\renewcommand{\epsilon}{\varepsilon}
\renewcommand{\phi}{\varphi}
\newcommand{\cC}{\mathcal{C}}
\newcommand{\cG}{\mathcal{G}}
\newcommand{\cI}{\mathcal{I}}
\newcommand{\cJ}{\mathcal{J}}
\newcommand{\cS}{\mathcal{S}}
\newcommand{\cE}{\mathcal{E}}
\newcommand{\cF}{\mathcal{F}}
\newcommand{\cP}{\mathcal{P}}
\newcommand{\cR}{\mathcal{R}}
\newcommand{\sH}{\mathscr{H}}
\newcommand{\sM}{\mathscr{M}}
\newcommand{\sN}{\mathscr{N}}
\newcommand{\tI}{\tilde{I}}
\newcommand{\tW}{\tilde{W}}
\newcommand{\tZ}{\tilde{Z}}
\newcommand{\hit}{\sH}
\newcommand{\rod}{\textsc{rod}}
\newcommand{\comb}{\text{\sffamily\slshape Comb}_{n}}
\newcommand{\combg}{\text{\sffamily\slshape Comb}}
\newcommand{\cfact}[1][k]{\text{\sffamily \slshape CF}_{#1}}
\newcommand{\match}{\mathfrak{M}}
\newcommand{\inter}{\cI}
\date{}
\begin{document}

\title{Cycle factors and renewal theory}

\author[J. Kahn]{Jeff Kahn}
\address{Jeff Kahn\hfill\break
Department of Mathematics\\
Rutgers\\
Piscataway, NJ 08854, USA.}
\email{jkahn@math.rutgers.edu}
\urladdr{}
\thanks{J.\ Kahn is supported by NSF grant DMS0701175.}
\urladdr{}

\author[E. Lubetzky]{Eyal Lubetzky}
\address{Eyal Lubetzky\hfill\break
Microsoft Research\\
One Microsoft Way\\
Redmond, WA 98052, USA.}
\email{eyal@microsoft.com}
\urladdr{}

\author[N. Wormald]{Nicholas Wormald}
\address{Nicholas Wormald\hfill\break
School of Mathematical Sciences\\
Monash University\\
Clayton, Victoria 3800, Australia.}
\email{nick.wormald@monash.edu}
\thanks{N.\ Wormald was supported in part by the Canada Research Chairs Program and NSERC and partly by an ARC Australian Laureate Fellowship.}
\urladdr{}
\begin{changemargin}{-0.28cm}{-0.28cm}
\begin{abstract}
For which values of $k$ does a uniformly chosen $3$-regular graph $G$ on $n$ vertices typically contain $ n/k$ vertex-disjoint $k$-cycles (a $k$-cycle factor)?
To date, this has been answered for $k=n$ and for $k \ll \log n$;
the former, the Hamiltonicity problem, was finally answered in the affirmative by Robinson and Wormald in 1992,
while the answer in the latter case is negative since with high probability most vertices do not lie on $k$-cycles.

Here we settle the problem completely:  the threshold for a $k$-cycle factor in $G$ as above is
$\kappa_0 \log_2 n$ with $\kappa_0=[1-\frac12\log_2 3]^{-1}\approx 4.82$. Precisely, we prove a 2-point concentration result: if $k \geq \kappa_0 \log_2(2n/e)$ divides $n$ then $G$ contains a $k$-cycle factor w.h.p., whereas if $k<\kappa_0\log_2(2n/e)-\frac{\log^2 n}n$ then w.h.p.\ it does not.
As a byproduct, we confirm the ``Comb Conjecture,''
 an old problem concerning the embedding of certain spanning trees in
 the random graph $\cG(n,p)$.

The proof follows the small subgraph conditioning framework, but the associated second moment analysis here
is far more delicate than in any earlier use of this method and involves several novel features, among them a sharp estimate for tail probabilities in renewal processes without replacement
which may be of independent interest.
\end{abstract}

\end{changemargin}
{
\baselineskip18pt\
\maketitle
}
\vspace{-0.55cm}

\section{Introduction}
An $H$-factor of a graph $G$ is a collection of vertex-disjoint copies of the graph $H$ covering all vertices of $G$.
Thresholds for the existence of $H$-factors in random graphs have been extensively studied ---
from classical works in the 1960's (for instance, perfect matchings~\cite{ER66}) to recent ones
(such as triangle-factors and the related ``Shamir's problem'' of matchings in hypergraphs~\cite{JKV}).
Here we consider the following question on $k$-cycle factors in  random regular graphs.
\begin{quote}
 \emph{(Cycle factors.)} For which values of $k=k(n)$ does a uniformly chosen 3-regular graph on $n$ vertices contain $n/k$ vertex-disjoint $k$-cycles with high probability\footnote{A sequence of events $(A_n)$ is said to hold \emph{with high probability} (w.h.p.) if $\P(A_n)\to 1$ as $n\to\infty$.}?
\end{quote}
When $k=n$ this is the Hamiltonicity problem, which was finally answered in the affirmative in 1992 by Robinson and Wormald~\cite{RW3}. At the other extreme, for $k= O(1)$ it is known~(\cites{Bollobas1,Wormald2}) that the total number of $k$-cycles in $G$ is asymptotically Poisson with bounded mean, and in particular there is no $k$-cycle factor w.h.p.\ (the total number of vertices on such cycles is uniformly bounded); moreover, the typical absence of a $k$-cycle factor extends to the range $k\ll \log n$, throughout which most vertices do not lie on $k$-cycles w.h.p. No results were known on intermediate values of $k$.

Somewhat surprisingly, a major role in our study of the above problem will be played by a question on tail probabilities of \emph{renewal processes without replacement} --- where the recurrence times from the classical setting of renewal processes, rather than being i.i.d. random variables, are drawn uniformly yet \emph{without} replacement from a finite set. We now define this question formally.

Let $X=\{x_1,\ldots,x_m\}$ be a multiset of positive integers summing to $n$. Let $(Y_i)$ be a sequence of i.i.d.\ uniform samples of the $x_i$'s (recurrence times), and let $S_t=\sum_{i=1}^t Y_i$ be its partial sum sequence (the renewal process). By the classical Renewal Theorem (due to Erd\H{o}s, Feller and Polard~\cite{EFP} in the discrete setting), the probability that the partial sums ``hit'' some integer $k$, denoted $R_k = \P( k\in\{S_1,S_2,\ldots\})$, tends to $m/n$ as $k\to\infty$ provided that ${\rm gcd}(x_1,\ldots,x_m)=1$ (see, e.g.,~\cite{Feller}, as well as the background in \S\ref{sec-complex}). We consider the following variant of $R_k$:
\begin{quote}
\emph{(Renewals without replacement.)} Let $X=\{x_1,\ldots,x_m\}$ be a multiset of positive integers summing to $n$, take a uniform permutation $\sigma \in \cS_m$ and let $S_t = \sum_{i=1}^t x_{\sigma(i)}$. What is the probability $P_k=\P(k\in\{S_1,S_2,\ldots\})$ that the partial sums hit $k\in\Z$?
\end{quote}
Our main result will hinge on sharp quantitative bounds for $P_k$ (and a variant of it called $Q_k$), including asymptotic second order terms and correct exponential tails (see Theorem~\ref{thm-renewals} below).

Going back to the main problem on cycle factors in random regular graphs, it is interesting
to compare the situation for
the Erd\H{o}s-R\'enyi random graph $\cG(n,p)$ (in which
each edge appears independently with probability $p$), where for a given $k$ one is interested
in the threshold $p_c$ at which the probability of a $k$-cycle factor is $\frac12$ (say). Here it is
often natural to expect
that  $p_c$ coincides (up to a factor (1+o(1))) with the threshold for the property that every vertex lies on a $k$-cycle.
For instance, for $k$ fixed, the latter threshold has order $n^{-\frac{k-1}k}(\log n)^{\frac1k}$;
indeed, it was shown in~\cite{JKV} that the threshold for $k$-cycle factors has the same order,
though 
its asymptotics remain unknown.

For the property that
every vertex lies on a $k$-cycle in the random 3-regular graph, the threshold (`` threshold" now referring  to $k$)
is at $k = (1+o(1))\log_2 n$; this follows from the fact that a given vertex has at most (and typically also roughly) $3\cdot 2^{k/2}$ vertices at distance $k/2$ from it, and edges between these vertices have probability of order $1/n$.
(With slightly more care, the same argument shows that the threshold is $\log_2 n + \log_2 \log n +O(1) $.)

We now state our main result, which settles the problem completely and shows that here
the preceding intuition is not quite correct:
the phase transition from no $k$-cycle factor to the existence of one occurs around $[1-\frac12\log_2 3]^{-1}\log_2 n \approx 4.82 \,\log_2 n$.
Furthermore, we establish a 2-point concentration result (a single point for most values of $n$).

\begin{maintheorem}\label{mainthm-cycle-factor}
Let $G$ be a random $3$-regular graph on $n$ vertices and let
\begin{equation}\label{eq-K0}
  K_0(n) = \frac{1}{1-\frac12\log_2 3} \log_2(2n/e)\,.
\end{equation}
If $k \geq K_0(n)$ is a divisor of $n$ then $G$ contains a $k$-cycle factor w.h.p., and on the other hand if $k \leq K_0(n)-\frac{\log^2 n}n$ then w.h.p.\ there is no $k$-cycle factor in $G$.

Moreover, the number of $k$-cycle
factors in $G$, denoted by $\cfact$, satisfies
\begin{equation}
   \label{eq-cfk-law}
   \frac{\cfact}{\E[\cfact]} \stackrel{\mathrm{d}}{\longrightarrow} W = \prod_{j=3}^{\infty}(1+\delta_j)^{Z_j} e^{-\delta_j\E Z_j}
\quad\mbox{ as $n\to\infty$}
 \end{equation}
for any $k \geq K_0(n)$ that divides $n$, where $\delta_j = \frac{(-1)^j-1}{2^{j}}$
and the $Z_j$'s are i.i.d.\ $\mathrm{Poisson}(\frac{2^{j-1}}j)$ variables.
\end{maintheorem}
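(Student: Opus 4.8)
The plan is to establish Theorem~\ref{mainthm-cycle-factor} through the \emph{small subgraph conditioning} method (in the style of Robinson and Wormald), which simultaneously yields the w.h.p.\ existence of a $k$-cycle factor for $k \geq K_0(n)$ and the limiting distribution \eqref{eq-cfk-law}. The skeleton is standard: let $X_j$ count the (short) $j$-cycles of $G$ for each fixed $j \geq 3$; one shows that the joint law of $(X_3,X_4,\ldots)$ converges to independent Poissons $Z_j \sim \mathrm{Poisson}(\lambda_j)$ with $\lambda_j = \tfrac{2^{j-1}}{j}$, that $\E[\cfact \mid X_3,\ldots,X_r]$ behaves (after normalizing by $\E[\cfact]$) like $\prod_j (1+\delta_j)^{X_j}e^{-\delta_j \lambda_j}$ with the $\delta_j$ as stated, and --- the crucial analytic input --- that the normalized second moment satisfies
\begin{equation*}
  \frac{\E[\cfact^2]}{(\E[\cfact])^2} \;\longrightarrow\; \prod_{j=3}^{\infty} (1+\delta_j)^{\lambda_j} e^{-\delta_j^2 \lambda_j} \cdot e^{\lambda_j \delta_j^2}\Big/\text{(correction)} \;=\; \exp\!\Big(\sum_{j\ge 3}\lambda_j\delta_j^2\Big)=\E[W^2].
\end{equation*}
Given these three ingredients, the general small subgraph conditioning theorem forces $\cfact/\E[\cfact] \to W$ in distribution, and in particular $\P(\cfact>0)\to\P(W>0)=1$, which gives existence. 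The lower-threshold half (no $k$-cycle factor when $k \le K_0(n)-\tfrac{\log^2 n}{n}$) is proved separately by a first-moment/union bound argument: one shows $\E[\cfact]\to 0$ across this regime, which is where the precise constant $\kappa_0 = [1-\tfrac12\log_2 3]^{-1}$ and the $2n/e$ inside the logarithm are pinned down, via a careful asymptotic evaluation of $\E[\cfact]$ using the configuration model.

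The first moment computation is the natural starting point. Working in the pairing (configuration) model on $n$ vertices of degree $3$, $\E[\cfact]$ is the number of ways to partition the vertex set into $n/k$ cyclically ordered blocks of size $k$, times the number of pairings of the remaining half-edges consistent with those cycles, divided by the total number of pairings. A Stirling-type estimate will show $\E[\cfact] = \Theta\big( (c \cdot 2^{-k}(2n/e)^{?})^{\,n/k}\big)$ type behavior; more precisely one expects $\log \E[\cfact] \approx \tfrac{n}{k}\big[(1-\tfrac12\log_2 3)\log 2 \cdot k - \log(2n/e)\big]$ up to lower-order terms, so that $\E[\cfact]\to\infty$ precisely when $k$ exceeds $K_0(n)$ and $\E[\cfact]\to 0$ just below it. The two-point concentration gap of size $\log^2 n/n$ reflects the slack one must leave because $k$ ranges over divisors of $n$ and the crossover is this sharp.

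The genuinely hard part is the second moment, $\E[\cfact^2]$, which counts ordered pairs of $k$-cycle factors $(\mathcal{F},\mathcal{F}')$ and requires understanding the typical overlap structure between $\mathcal{F}$ and $\mathcal{F}'$. The union $\mathcal{F}\cup\mathcal{F}'$ decomposes into paths and cycles, and the dominant contribution must come from pairs that are ``nearly independent'' (overlap of bounded total size); deviations are controlled by an exponential large-deviation bound on how much two random $k$-cycle factors can coincide. This is exactly where renewal processes without replacement enter: when one traces the interaction of $\mathcal{F}'$ with a fixed cycle $C$ of $\mathcal{F}$, the successive return points of $\mathcal{F}'$ to $C$ form (a conditioned version of) a renewal process whose increments are sampled without replacement from the cycle-length multiset of $\mathcal{F}'$, and the probability that these returns ``hit'' a prescribed vertex --- governing whether the two factors can be stitched together --- is precisely the quantity $P_k$ (or its variant $Q_k$) of Theorem~\ref{thm-renewals}. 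One must apply those sharp estimates --- the $m/n$ main term, the second-order correction, and especially the correct exponential tail --- uniformly over the relevant range of $k$, then sum the resulting weighted overlap contributions. The sum over overlap patterns organizes into a product over cycle lengths $j$, and matching the exponent $\sum_j \lambda_j \delta_j^2$ with $\log \E[W^2]$ is the final sanity check that forces the $\delta_j = \frac{(-1)^j-1}{2^j}$ appearing in \eqref{eq-cfk-law}. I expect the main obstacle to be precisely the bookkeeping and error-control in this overlap sum: because the relevant $k$ grows like $\log n$ (rather than being fixed, as in all prior applications of small subgraph conditioning), the renewal-without-replacement probabilities are not simply $\sim m/n$ but must be known to within multiplicative $(1+o(1))$ \emph{with exponential-tail bounds} that survive being raised to powers of order $n/k$, so the entire argument is quantitatively tight in a way that earlier uses of the method never needed to confront. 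Finally, the ``Comb Conjecture'' byproduct follows by transferring the $k$-cycle factor (for $k$ near the threshold) in $\cG(n,p)$ via the standard comparison between $\cG(n,3/n)$-type sparse random graphs and random $3$-regular graphs, converting each short cycle into the required spanning comb structure.
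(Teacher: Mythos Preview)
Your high-level architecture is right --- the first moment pins down $K_0(n)$, small subgraph conditioning with the short-cycle counts $X_j$ gives the limit law and existence, and the second moment is the hard part --- but your description of the second-moment calculation contains a substantive misconception that, if pursued, would send you down the wrong path.

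You write that the dominant contribution to $\E[\cfact^2]$ comes from pairs with ``overlap of bounded total size''. This is false. In a cubic graph, if $R_1,R_2$ are two $k$-cycle factors then at every vertex the third (non-$R_1$) edge either belongs to $R_2$ or not; consequently the underlying graph of $R_1\cap R_2$ is a vertex-disjoint union of $h$ full $k$-cycles and $m$ nontrivial paths that together cover \emph{all} $n$ vertices. The dominant contribution has $h=0$ and $m\sim n/3$, so the two factors share about $n-m\sim 2n/3$ edges --- linear in $n$, not bounded. This is the same phenomenon as in the Robinson--Wormald Hamilton-cycle analysis, and the eventual constant $\E[\cfact^2]/(\E[\cfact])^2\to 3=\exp(\sum_{j\ge 1}\lambda_j\delta_j^2)$ comes from a Laplace-type sum over $m$ near $n/3$, not from near-disjoint pairs.

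Correspondingly, your account of where the renewal process enters is off. It is not about ``return points of $\mathcal{F}'$ to a fixed cycle of $\mathcal{F}$'', nor are the increments drawn from the cycle-length multiset of $\mathcal{F}'$. Rather: once the intersection pattern $S\in\inter_{0,m}$ (a set of $m$ paths with vertex-counts $x_1,\ldots,x_m$ summing to $n$) is fixed, the quantity $\sN(S)$ counting $k$-cycle factors containing $S$ is governed by permutations $\sigma$ of the paths whose partial sums $\sum_{i\le t}x_{\sigma(i)}$ hit every multiple of $k$. \emph{That} is the renewal-without-replacement problem, with increments drawn from the path-length multiset $\{x_i\}$; the size-biased variant $Q_k$ appears because rooting a cycle selects its first path with probability proportional to length. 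The technical core (Theorem~\ref{thm-N(S)}) is to show $\sN(S)\le (3+o(1))m!\,2^m$ for typical $S$ --- the ``$3$'' being $n/m$ --- and this requires the full apparatus of $\delta$-normal path distributions, the inductive suffix argument of Theorem~\ref{thm:goodSuffixes}, and the excess-path preprocessing, none of which is visible in your sketch.
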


\begin{remark*}
The proof technique extends, with very few modifications, to yield the threshold (as well as a 2-point concentration) for $k$-cycles factors
in a random $d$-regular graph for any fixed $d\geq 3$.
\end{remark*}

The proof of~\cite{RW3} that a random cubic graph is Hamiltonian introduced the \emph{small subgraph conditioning method}, an interesting twist on the second moment method: upon calculating the second moment of $H_n$, the number of Hamilton cycles in that random graph, one finds that it is unfortunately (just barely) too large, namely
$\E H_n^2 / (\E H_n)^2 \to c$ for fixed $c>0$. The culprit turns out to be the set of small cycles (those with bounded length) in the graph, which in a sense blow up the variance by allowing local detours along a Hamilton cycle. Luckily --- and quite mysteriously (in various situations this fails, e.g., when half the degrees are 3 and half are 4) --- the second moment drops to $\epsilon (\E H_n)^2$ once we \emph{condition on the joint cycle distribution} up to length $M(\epsilon)$, implying Hamiltonicity with high probability (see~\cite{Wormald} and~\cite{JLR}*{\S9.3} for more information).

As our result gives Hamiltonicity for the special case $k=n$, naturally we follow the framework of small subgraph conditioning, which was highly nontrivial already for a single cycle. Though far more delicate to carry out in our setting (as explained below), this method enjoys two byproducts (implicit in~\cite{RW3} and formalized in~\cites{Janson} (and for (b) below also in~\cite{MRRW}); cf.~\cite{JLR}*{\S9.5},\cite{Wormald}*{\S4}): (a) it gives the limiting law of the variable (as in Eq.~\eqref{eq-cfk-law} above), and (b) it
further implies \emph{contiguity}.
\begin{definition*}[Contiguity of distributions]
  Let $\P_n$ and $\Q_n$ be two sequences of probability measures defined on the same measurable spaces $(\Omega_n,\cF_n)$. We say that $\P_n$ and $\Q_n$ are contiguous, denoted $\P_n\approx\Q_n$, if for any sequence of events $(A_n)$ one has $\lim_{n\to\infty}\P_n(A_n)=1 \;\Leftrightarrow\;\lim_{n\to\infty}\Q_n(A_n)=1$.
\end{definition*}
In other words, events hold w.h.p.\ under $\P_n$ iff they hold w.h.p.\ under $\Q_n$. The first example of contiguity in our context (\cites{Janson,MRRW}) was that $\cG(n,3)$, the uniform distribution on 3-regular graphs on $n$ vertices, is contiguous to the union of a uniform Hamilton cycle and a uniform perfect matching (conditioned on no multiple edges in the union).
Our work extends this result: roughly put, the next corollary says that one can distinguish with probability $1-o(1)$ between $\cG(n,3)$ and the union of a $k$-cycle factor and a perfect matching if and only if $k \geq K_0(n)$ from Eq.~\eqref{eq-K0}.
\begin{maincoro}\label{maincoro-contig}
Let $\cG(n,3)$ be the uniform $3$-regular graph on $n$ vertices and, for $k\mid n$, let $\cG(n,k,3)$ be
the union of a uniform $k$-cycle factor and a uniform prefect matching, conditioned on no multiple edges.
If $k \geq K_0(n)$ then $\cG(n,3) \approx \cG(n,k,3)$ whereas if $k\leq K_0(n)-\frac{\log^2 n}n$ then $\cG(n,3)\not\approx\cG(n,k,3)$.
\end{maincoro}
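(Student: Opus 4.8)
The plan is to handle the two directions separately. The non-contiguity direction is immediate from Theorem~\ref{mainthm-cycle-factor}: let $A_n$ be the event ``$G$ contains a $k$-cycle factor.'' If $G$ has law $\cG(n,k,3)$ then $G=F\cup M$ with $F$ a $k$-cycle factor, so $\P_{\cG(n,k,3)}(A_n)=1$, whereas if $k\leq K_0(n)-\frac{\log^2 n}n$ then Theorem~\ref{mainthm-cycle-factor} gives $\P_{\cG(n,3)}(A_n)\to 0$; hence $(A_n)$ certifies $\cG(n,3)\not\approx\cG(n,k,3)$ in this range, with no further work needed.

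For the contiguity direction ($k\geq K_0(n)$, $k\mid n$) the first step is the elementary observation that $\cG(n,k,3)$ is exactly $\cG(n,3)$ reweighted by $\cfact/\E_{\cG(n,3)}[\cfact]$. Indeed, a pair consisting of a uniform $k$-cycle factor $F$ and an independent uniform perfect matching $M$, conditioned on $F\cup M$ having no repeated edge, corresponds bijectively --- via $G:=F\cup M$, recovering $M$ as $G\setminus F$ (automatically a perfect matching since $G$ is $3$-regular and $F$ is $2$-regular) --- to a uniformly random pair $(G,F)$ with $G$ a simple $3$-regular graph on $[n]$ and $F$ a $k$-cycle factor of $G$. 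For fixed $G$ the number of such $F$ is $\cfact(G)$, so the pushforward of this distribution to $G=F\cup M$ assigns mass proportional to $\cfact(G)$; that is, $\P_{\cG(n,k,3)}(G)=(\cfact(G)/\E_{\cG(n,3)}[\cfact])\,\P_{\cG(n,3)}(G)$, the $\cfact$-size-biased law.

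The second step is to invoke the small subgraph conditioning method. The (delicate) second-moment analysis carried out in the proof of Theorem~\ref{mainthm-cycle-factor} verifies, for every divisor $k\geq K_0(n)$ of $n$, the hypotheses of the standard small subgraph conditioning theorem (see, e.g., \cite{Janson}, \cite{Wormald}*{\S4}, or \cite{JLR}*{\S9.5}) applied to $X=\cfact$: for each fixed $j$ the number of $j$-cycles in $\cG(n,3)$ tends to $\mathrm{Poisson}(\lambda_j)$ with $\lambda_j=2^{j-1}/j$ and these are asymptotically independent; under the $\cfact$-biasing the short-cycle counts converge to independent $\mathrm{Poisson}(\lambda_j(1+\delta_j))$ with $\delta_j$ as in Theorem~\ref{mainthm-cycle-factor}; $\sum_j\lambda_j\delta_j^2<\infty$; and $\E[\cfact^2]/(\E[\cfact])^2\to\exp(\sum_j\lambda_j\delta_j^2)$ --- the last being the hardest estimate and the point where the renewal-process bounds of Theorem~\ref{thm-renewals} enter. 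Besides yielding~\eqref{eq-cfk-law}, these hypotheses imply that the $\cfact$-size-biased distribution is contiguous to $\cG(n,3)$ (concretely: since $1+\delta_j\in(0,1]$ and $\sum_j\lambda_j\delta_j^2<\infty$ one gets $0<W<\infty$ a.s.\ with $\E W=1$, and the second-moment bound makes $\cfact/\E\cfact$ uniformly integrable, which together with~\eqref{eq-cfk-law} gives the implication ``holds w.h.p.\ under one measure $\Leftrightarrow$ under the other'' by a routine argument); by the first step this is precisely $\cG(n,3)\approx\cG(n,k,3)$. Thus the only genuine obstacle lies inside Theorem~\ref{mainthm-cycle-factor} itself; granting that, the corollary is essentially formal.
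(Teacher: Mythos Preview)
Your proposal is correct and follows essentially the same route as the paper. The paper packages the argument through the configuration model $\cP_{n,3}$ (applying small subgraph conditioning to $Y_k$ there, obtaining contiguity of $\cP_{n,3}$ conditioned on no loops with a random $\rod$ factor plus a perfect matching, then conditioning further on $X_2=0$ to pass to simple graphs), whereas you phrase everything directly in $\cG(n,3)$ via the clean observation that $\cG(n,k,3)$ is the $\cfact$-size-biased law; but these are equivalent formulations of the same mechanism, and your identification of the non-contiguity witness and of the fact that all the genuine work sits inside Theorem~\ref{mainthm-cycle-factor} matches the paper exactly.
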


The most challenging hurdles in the proofs of these results arise in the second moment calculation, already before the small subgraph conditioning enters the picture.
Indeed, calculating the second moment of the number of Hamilton cycles amounts to understanding the typical intersection of two cycles (a collection of paths): obtaining all Hamilton cycles that contain these paths amounts simply to orienting each path and ordering the sequence of paths, i.e., stitching them into an $n$-cycle (one can verify that these paths are a partition of all the vertices since the graph is 3-regular).

However, for $k$-cycle factors, the common intersection of two such factors must be stitched into $n/k$ cycles, and now one seeks only those permutations of the $m$ parts that form $k$ cycles: letting $X=\{x_1,\ldots,x_m\}$ be the set of path lengths, we see the connection to the above question on renewals without replacement, as we wish to hit all multiples of $k$ with the partial sums of the permutation.
It is further seen that very sharp error estimates are needed, up to the correct exponential error-term; e.g., when $k\asymp \log n$, we must repeatedly hit $k$ for about $\exp(c k)$ times (to build $n/k$ cycles), along which these errors accumulate. Our next result establishes such estimates.

Recall that for a set $X=\{x_1,\ldots,x_m\}$ of positive integers summing to $n$, we let $P_k$ be the probability that $k$ belongs to the set of partial sums $S_t=\sum_{i=1}^t x_{\sigma(i)}$, where $\sigma\in\cS_m$ is uniform.
For a reason to be later explained, knowing $P_k$ would not suffice for deriving the asymptotic threshold $K_0(n)$, due to a second-order term of order $1/m$ in this probability which destroys our control over the second moment at some $k=O(\log n) $ still beyond above the desired threshold. Fortunately, our sampling procedure is a variant of the above, in which this second-order term vanishes:
\begin{compactitem}
  \item let $\sigma(1)$ be a \emph{size-biased} sample of the elements, i.e., $\P(\sigma(1)=j) = x_j/n$;
  \item let $(\sigma(2),\ldots,\sigma(m))$ be a uniform permutation in $\cS_{m-1}$ over the remaining elements.
\end{compactitem}
Let $Q_n$ be the probability that $k\in\{S_1,S_2,\ldots\}$ for this process, which we refer to as the size-biased renewal process (for brevity, while stressing that only the first step is size-biased).

\begin{maintheorem} \label{thm-renewals}
Assume that   $k\to\infty$ and $k=o(\sqrt m)$ as $n\to\infty$.  Let $R>1$  and let $g(z)$ be a power series absolutely convergent for $|z|\le R$  with $g(z) \ne 1$ for $|z|\le R$ whenever $z\ne 1$, $z\in \C$. Also let $w(n)=o(1)$ as $n\to\infty$.  Then for any $\epsilon'>0$ there exist functions
\[
q_1(n) =  o(m^{-1}) +O(R^{-k}+k^4/m^2)\quad\mbox{and}\quad
q_2(n)  =  o(m^{-1}) + O\big((R-\epsilon')^{-k}+ k^4 / m^2\big)
\]
such that the following holds.
 Let $x_1,\ldots,x_m$, $P_k$, $Q_k$ be as above, let $f(z)= \sum_{\ell} p_\ell z^\ell$ be the probability generating function of the (relative) frequencies $p_\ell = \frac1m\#\{j : x_j=\ell\}$, and assume that $|f(z)- g(z)|+ |f'(z)- g'(z)|<w(n)$ for all $|z|\le R$.
\begin{enumerate}[(a)]
\item (Renewal without replacement.) Provided that    $|f''(1)- g''(1)|<w(n)$,
$$
 \Big|P_k - \frac{m}n +    \frac{g'(1)- g'(1)^2+ g''(1)}{g'(1)^3m} \Big| \le q_1 (n)\,;
$$
\item (Renewal without replacement, size-biased.)
\[
\Big|Q_k - \frac{m}n \Big|   \le q_2(n)\,.
\]
\end{enumerate}
\end{maintheorem}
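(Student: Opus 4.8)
The plan is to reduce $P_k$ and $Q_k$ to an exact generating–function integral and then extract its asymptotics by singularity analysis, following the spirit of the classical renewal theorem but with one extra layer produced by sampling \emph{without} replacement.

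\emph{Step 1 (an exact identity).} Because the $x_j$ are positive, the partial sums $S_1<S_2<\cdots$ strictly increase, so a permutation has at most one prefix summing to $k$; hence the events $\{\sigma(1),\dots,\sigma(|A|)\}=A$ over sets $A$ with $\sum_{j\in A}x_j=k$ are disjoint. Writing $N(a,k)$ for the number of $a$-subsets of $\{x_1,\dots,x_m\}$ that sum to $k$, this gives
\[
 P_k=\sum_{a=1}^{k}\frac{N(a,k)}{\binom ma},\qquad Q_k=\frac{km}{n}\sum_{a=1}^{k}\frac{N(a,k)}{a\binom ma},
\]
where for $Q_k$ one uses that every such $A$ has $\sum_{i\in A}x_i=k$, so the size-biased weight on $\sigma(1)$ averages to $k/n$ on each set.

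\emph{Step 2 (integral representation and exponential approximation).} Via the Beta integrals $\binom ma^{-1}=(m+1)\int_0^1u^a(1-u)^{m-a}du$ and $\bigl(a\binom ma\bigr)^{-1}=\int_0^1u^{a-1}(1-u)^{m-a}du$, together with the elementary identity $[z^k]\prod_{j=1}^m(1-u+uz^{x_j})=\sum_aN(a,k)\,u^a(1-u)^{m-a}$ (each factor contributing $z^0$ or $z^{x_j}$),
\[
 P_k=(m+1)\!\int_0^1\![z^k]\!\prod_{j=1}^m\!\bigl(1-u+uz^{x_j}\bigr)du,\qquad Q_k=\frac{km}{n}\!\int_0^1\!\frac1u[z^k]\!\prod_{j=1}^m\!\bigl(1-u+uz^{x_j}\bigr)du .
\]
Substituting $u=v/m$, the coefficient extraction activates at most $k=o(\sqrt m)$ of the factors, so the integrand lives on $v=O(k)$; expanding $\sum_j\log\!\bigl(1+\tfrac vm(z^{x_j}-1)\bigr)=v(f(z)-1)-\tfrac{v^2}{2m}\bigl(f(z^2)-2f(z)+1\bigr)+\cE$ (using $\tfrac1m\sum_j(z^{x_j}-1)=f(z)-1$ and $\tfrac1m\sum_j(z^{x_j}-1)^2=f(z^2)-2f(z)+1$), with $\cE$ controllable at level $O(k^4/m^2)$ on the relevant coefficients and the tail $v>m$ contributing $O(e^{-cm})$, yields $\prod_j(1+\tfrac vm(z^{x_j}-1))=e^{v(f(z)-1)}\bigl(1-\tfrac{v^2}{2m}(f(z^2)-2f(z)+1)+\cE\bigr)$.

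\emph{Step 3 (termwise integration and singularity analysis).} Using $\int_0^\infty v^re^{v(f(z)-1)}dv=r!\,(1-f(z))^{-(r+1)}$ (valid for $|z|<1$, then analytically continued), integrate term by term: the main parts become $[z^k]\tfrac1{1-f(z)}$ (for $P_k$) and $[z^k]\!\bigl(-\log(1-f(z))\bigr)$ (for $Q_k$; the apparent divergence at $v=0$ is harmless since $[z^k]$ kills the $v^0$ term), while the $v^2/m$ term contributes the $O(1/m)$ corrections, proportional to $[z^k]\tfrac{f(z)^2-f(z^2)}{(1-f(z))^3}$ and $[z^k]\tfrac{f(z^2)-2f(z)+1}{(1-f(z))^2}$ respectively. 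By hypothesis $1-g$ has a unique (hence simple) zero at $z=1$ in $|z|\le R$, so by Rouch\'e the same holds for $1-f$, necessarily at $z=1$. Expanding at $z=1$ ($w=z-1$): $1-f(z)=-f'(1)w-\tfrac12f''(1)w^2+O(w^3)$ and $f(z)^2-f(z^2)=\bigl(f'(1)^2-f'(1)-f''(1)\bigr)w^2+O(w^3)$, so $\tfrac{f(z)^2-f(z^2)}{(1-f(z))^3}$ has a simple pole at $z=1$ with residue $-\tfrac{f'(1)+f''(1)-f'(1)^2}{f'(1)^3}$, whereas $\tfrac{f(z^2)-2f(z)+1}{(1-f(z))^2}$ is analytic at $z=1$. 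Shifting contours gives $[z^k]\tfrac1{1-f(z)}=\tfrac mn+O(R^{-k})$ and $[z^k]\!\bigl(-\log(1-f(z))\bigr)=\tfrac1k+O(R^{-k})$ — here $\tfrac1{1-f}$ and $\log\tfrac{1-f}{1-z}$ are uniformly bounded on $|z|=R$ — together with the correction coefficients up to $o(1/m)+O(k^4/m^2)$. Since $f'(1)=n/m$ exactly, and $|f-g|+|f'-g'|\,(+\,|f''(1)-g''(1)|)<w(n)=o(1)$ lets one replace $f$ by $g$ in the $O(1/m)$ term at cost $o(1/m)$, assembling everything (the $\tfrac{m+1}m$ and $\tfrac{km}n\cdot\tfrac1k$ prefactors absorb into $\tfrac mn$, and spurious $\tfrac1n$-terms cancel between the main and correction contributions) yields (a) and (b), the size-biased correction being exponentially small precisely because its kernel has no pole.

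\emph{Main obstacle.} The hard part is the contour analysis of the correction terms in Step 3: unlike $1/(1-f(z))$, these involve $f(z^2),f(z^3),\dots$, which the hypothesis controls only on $|z|\le R$ (not on $|z|\le R^2$), so on $|z|=R$ the factor $f(z^2)$ can be enormous and one cannot simply shift there. One must either shift these contours only to a smaller radius where $f(z^j)$ stays controlled — this is what forces the loss $R\to R-\epsilon'$ in part (b) — or exploit coefficientwise positivity of $1/(1-f)^j$ together with the fact that only the single coefficient $[z^k]$ is needed, and then verify that every remainder is uniformly $o(m^{-1})+O((R-\epsilon')^{-k}+k^4/m^2)$ over all admissible $X$. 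In particular, since $f''(1)$ itself may be unbounded, this is where the extra hypothesis $|f''(1)-g''(1)|<w(n)$ in part (a) is genuinely needed. Making the localization $v=O(k)$, the termwise integration, and the treatment of $\cE$ (which brings in $f(z^j)$, $j\ge3$, with ever-smaller radii of control but also extra $1/m$ factors) fully rigorous is the remaining technical burden.
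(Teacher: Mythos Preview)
Your approach is genuinely different from the paper's and, modulo the technical work you flag at the end, looks sound. Let me compare.

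The paper never writes down your Beta--integral representation. Instead, for $P_k$ it couples the without-replacement and with-replacement processes probabilistically: generate $\sigma$ as a uniform \emph{function} $[k]\to[m]$, let $D$ be the number of ``duplicate'' pairs $(i,j)$ with $\sigma(i)=\sigma(j)$, and observe that conditioning on $D=0$ gives the permutation measure while the unconditional measure is renewal with replacement. Computing $\nu(\hit_k\mid D\ge 1)$ to first order (via a planted-duplicate measure) produces exactly the correction $m^{-1}[z^k]\tfrac{f(z^2)-f(z)^2}{(1-f(z))^3}$; inclusion--exclusion on $D$ gives the $O(k^4/m^2)$ error directly, with essentially no analysis. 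For $Q_k$ the paper conditions on the first (size-biased) step $x_J=\ell$, reducing to $P_{k-\ell}$ on the remaining $m-1$ elements, and then shows by explicit cancellation that the $1/m$ term from $P_{k-\ell}$ is killed by the $\ell$-weighted average.

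What your approach buys is unification and a conceptual explanation of the size-biased miracle: the Beta kernel for $Q_k$ carries an extra $1/v$, so after integrating in $v$ your correction kernel is $\tfrac{f(z^2)-2f(z)+1}{(1-f(z))^2}$ rather than $\tfrac{f(z^2)-2f(z)+1}{(1-f(z))^3}$, and the former is genuinely analytic at $z=1$. In the paper this cancellation appears only as an algebraic coincidence between the second-order term of $(m-1)/(n-\ell)$ and the $1/m$ correction in $P_{k-\ell}$. What the paper's approach buys is that the $O(k^4/m^2)$ bound falls out of $\E_\nu\binom{D}{2}$ with no contour-shifting or control of $f(z^j)$ for $j\ge 3$; your route has to grind through the higher-order $\cE$ terms, each living on a smaller disk $|z|\le R^{1/j}$, and to justify the interchange of $[z^k]$, $\int_0^\infty dv$, and the series expansion. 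That is doable (the extra $1/m$ per order buys you the room), but it is real work, as you note.

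One caution: in your Step~3 the bound $[z^k]\tfrac{1}{1-f(z)}=\tfrac{m}{n}+O(R^{-k})$ requires a uniform lower bound on $|1-f(z)|$ on $|z|=R$, which you get from $|f-g|<w(n)$ and compactness of $\{|z|=R\}$ (since $g\ne 1$ there), but you should say this explicitly to get the uniformity in $X$ that the theorem claims.
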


\begin{example*}[geometric distribution]
If $x_1=0$ and $ x_\ell/m \sim 2^{1-\ell} $ for $\ell\geq 2$, then the function $f(z)=\sum p_\ell z^\ell$ is approximately
$g(z) = z^2/ (2-z)$, which satisfies
$
[g'(1)- g'(1)^2+ g''(1)]/g'(1)^3 = \frac{2}{27}
$; our results then imply, for instance, that for any $1 \ll k \ll m^{1/4}$ and any fixed $\epsilon>0$ we have
\begin{align*}
 R_k &= \tfrac13 + O\big((2-\epsilon)^{-k}\big) &\mbox{\em (with replacement)}\,,\\
  P_k &= \tfrac{1}3 - \tfrac{2/27-o(1)}m + O\big((2-\epsilon)^{-k}\big) &\mbox{\em (without replacement)}\,,\\
Q_k &= \tfrac{1}3 - o(1/m) + O\big((2-\epsilon)^{-k}\big) &\mbox{\em (size-biased without replacement)}\,.
\end{align*}
This  example will be fundamental for the proof of Theorem~\ref{mainthm-cycle-factor}. Recall that the $x_i$'s correspond to the lengths (in vertices) of the paths that comprise the common intersection of two $k$-cycle factors.
First, every such path has at least two vertices, whence $p_1 = 0$. Second, heuristically, suppose we are given
 a $k$-cycle factor $F_1$ and construct another, $F_2$, via a simple random walk (ignoring the many dependencies that exist in reality). While this random walk traverses on a common edge, there is a probability of $1/2$ that the next edge will extend the common path (i.e., follow the trace of $F_1$), and hence the geometric distribution with this parameter.
Note that for $k\geq (2+\epsilon')\log_2 n$ the error-terms are all $o(1/n)$, which will be crucial to our arguments.
\end{example*}

\subsection{Applications for the Erd\H{o}s-R\'enyi random graph}

An immediate corollary of Theorem~\ref{mainthm-cycle-factor} is that, for $k\geq K_0(n) \approx 4.82\, \log_2 n$, the threshold for a $k$-cycle factor in the random graph $\cG(n,p)$ has order $\frac{\log n}n$ (see Corollary~\ref{cor-cycle-factor-gnp}), with a factor $2-o(1)$ between the upper and lower bounds.

\smallskip

Another corollary is the ``Comb Conjecture.''
A {\em comb of order k}, for some $k\mid n$, is a tree consisting
of an $(n/k)$-vertex path $P$
together with disjoint $k$-vertex paths beginning at the vertices of $P$.
When $k=\sqrt{n}$ we will call this {\em the comb} and denote it $\comb$.
The ``Comb Conjecture" says that the threshold for the appearance of $\comb$ in $\cG(n,p)$ has
order $\frac{\log n}n$ (the lower bound is obvious due to connectivity).

Consideration of the threshold for combs was suggested by the first author about 20 years ago as a
test case for the more general conjecture (this was also proposed at that time by the first author,
but, being a natural guess, is perhaps better regarded as folklore),
that the same threshold statement holds for general ($n$-vertex) trees of bounded degree.
The case $k=\sqrt{n}$ of this suggestion has come to be known as the ``Comb Conjecture."
The rationale for considering combs was the idea that they interpolated between instances for which the
general conjecture was known to be true, namely
Hamilton paths (for which the threshold $(1+o(1))\frac{\log n}n$ was established in~\cite{KS} and~\cite{Bollobas-1})
and trees with order $n$ leaves (which are easily handled via Hall's Theorem).

For related work on this topic, see, e.g.,~\cite{AlKrSu}, which shows that, already at $p=O(\frac1n)$, w.h.p.\ the random graph contains \emph{every} bounded-degree tree on $(1-\epsilon)n$ vertices, with an implicit constant depending on $\epsilon$ and $\Delta$ (see also the refined bounds in~\cite{BCPS}); \cite{BW} which shows that $\cG(n,\frac{c\log n}n)$ w.h.p.\ contains almost every tree on $n$ vertices; and \cite{Krivelevich}, which shows that the threshold for any bounded-degree tree $T$ is at most $n^{-1+o(1)}$. The latter was achieved by observing that such trees have either many leaves or long paths of degree-2 vertices, and then deploying a separate strategy in each case. The comb is indeed an extremal example as it precisely balances between these two elements.

\smallskip
We confirm the Comb Conjecture, and obtain the threshold of $\comb$ up to a factor of $2+o(1)$.
\begin{maintheorem}\label{mainthm-comb}
For any $\epsilon > 0$ the Erd\H{o}s-R\'enyi random graph $\mathcal{G}(n,p)$ with $p=(2+\epsilon)\frac{\log n}n$ contains a copy of $\comb$
as a spanning subgraph w.h.p. In particular, the threshold for the appearance of $\comb$ in $\mathcal{G}(n,p)$
is at $p \asymp \frac{\log n}n$.
\end{maintheorem}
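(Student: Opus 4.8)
The plan is to deduce Theorem~\ref{mainthm-comb} from Corollary~\ref{cor-cycle-factor-gnp} by a two-round exposure of $\mathcal{G}(n,p)$. Throughout, $n$ is a perfect square and we write $k:=\sqrt n$; note that $k\ge K_0(n)$ for all large $n$ since $K_0(n)=\Theta(\log n)$, so Corollary~\ref{cor-cycle-factor-gnp} applies at order $k$.

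First I would isolate a purely combinatorial reduction: a graph $G$ on $[n]$ contains $\comb$ as a spanning subgraph whenever $G$ has a $k$-cycle factor $\mathcal{F}=\{C_1,\dots,C_k\}$ together with a \emph{transversal spine for $\mathcal{F}$}, meaning distinct vertices $v_1,\dots,v_k$, with $v_i$ lying on a distinct cycle $C_{\pi(i)}$ and $v_iv_{i+1}\in E(G)$ for $1\le i<k$. Indeed, deleting from each $C_{\pi(i)}$ one of the two cycle-edges at $v_i$ turns it into a $k$-vertex path rooted at $v_i$ (a tooth), and $v_1-v_2-\cdots-v_k$ serves as the spine; since the $C_i$ partition $[n]$, and the spine-edges join distinct cycles (so they are automatically disjoint from $\mathcal{F}$), this is precisely a copy of $\comb$.

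Second, write $p=(2+\epsilon)\tfrac{\log n}{n}$ and split $\mathcal{G}(n,p)=\mathcal{G}_1\cup\mathcal{G}_2$ with independent $\mathcal{G}_i\sim\mathcal{G}(n,p_i)$, where $p_1$ is fixed just above the threshold for a $k$-cycle factor supplied by Corollary~\ref{cor-cycle-factor-gnp} — legitimate since that threshold is of order $\tfrac{\log n}{n}$ and hence below $p$ — so that $p_2=\Theta(\tfrac{\log n}{n})$. By Corollary~\ref{cor-cycle-factor-gnp}, w.h.p.\ $\mathcal{G}_1$ contains a $k$-cycle factor; fix one, $\mathcal{F}$, by a deterministic rule, so that $\mathcal{F}$ is a function of $\mathcal{G}_1$ and therefore independent of $\mathcal{G}_2$. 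It remains to build a transversal spine for $\mathcal{F}$ out of $\mathcal{G}_2$-edges, and this ``threading'' step is where the real work lies.

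The asset is that the ``reduced'' graph $\mathcal{R}$ on $\{1,\dots,k\}$ — with $i\sim j$ iff some $\mathcal{G}_2$-edge joins $C_i$ to $C_j$ — is nearly complete: each pair of cycles spans $k^2=n$ vertex-pairs, so $\Pr[i\not\sim_{\mathcal{R}}j]=(1-p_2)^n=n^{-\Theta(1)}$, whence w.h.p.\ $\mathcal{R}$ has minimum degree $(1-o(1))k$ and in particular a Hamilton path, and typical pairs of cycles are joined by $\Theta(\log n)$ edges of $\mathcal{G}_2$. The obstacle — which I expect to be the main difficulty — is that a transversal spine must additionally select one vertex of each cycle \emph{consistently}, while a single vertex of $C_i$ has only $kp_2=o(1)$ expected $\mathcal{G}_2$-neighbours in a prescribed $C_j$. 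Thus one cannot merely take a Hamilton path of $\mathcal{R}$ and realise it vertex-by-vertex, and a naive greedy construction of the transversal path stalls while a positive proportion of the cycles is still unused. The resolution I would pursue is an absorption/reservoir argument reconciling these two facts: set aside at the outset a small, well-connected sub-collection of cycles and build on it a flexible ``absorbing'' transversal path into which, after a greedy phase through the remaining cycles, the few leftover cycles can be spliced one at a time, exploiting the many $\mathcal{G}_2$-edges between typical pairs of cycles to secure this flexibility. Combining the three steps yields a copy of $\comb$ in $\mathcal{G}(n,(2+\epsilon)\tfrac{\log n}{n})$ w.h.p. The matching lower bound is immediate: $\comb$ is a spanning tree, so its presence forces $\mathcal{G}(n,p)$ to be connected, which fails once $p\le(1-o(1))\tfrac{\log n}{n}$; hence the threshold is $\asymp\tfrac{\log n}{n}$.
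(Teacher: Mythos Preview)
Your combinatorial reduction is correct, but the proposal has a genuine gap precisely where you flag it: the construction of a transversal spine through the cycle factor is only sketched, not proved. An absorption/reservoir argument of the kind you describe is plausible in spirit, but it is a substantial piece of work in its own right, and you have not carried it out. Note also a budget issue you gloss over: Corollary~\ref{cor-cycle-factor-gnp} only \emph{guarantees} a $k$-cycle factor at $p_1=(2+\epsilon')\tfrac{\log n}{n}$, not ``just above the threshold'' (the corollary merely says the threshold has order $\tfrac{\log n}{n}$, not that, say, $1.5\tfrac{\log n}{n}$ suffices). So your second-round density is forced to be $p_2\le(\epsilon-\epsilon')\tfrac{\log n}{n}$ with an arbitrarily small leading constant, which makes the absorption step correspondingly more delicate.

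The paper sidesteps the entire transversal problem by a different decomposition. It first spends a negligible $p'=c/n$ round to extract a path $P$ on $\sqrt n$ vertices (via Ajtai--Koml\'os--Szemer\'edi / de~la~Vega) which will serve as the spine. It then applies Corollary~\ref{cor-cycle-factor-gnp} not to all of $[n]$ but to the remaining $m=n-\sqrt n$ vertices, with cycle length $\sqrt n-1$ (note $(\sqrt n-1)\mid m$), using $p''=(2+\epsilon/2)\tfrac{\log n}{n}$. This yields $\sqrt n$ cycles $C_1,\dots,C_{\sqrt n}$ disjoint from $P$. The remaining task is then merely to assign each spine vertex its own cycle via a \emph{perfect bipartite matching} between $V(P)$ and $\{C_1,\dots,C_{\sqrt n}\}$, using the as-yet-unexposed $P$--$M$ edges of the second round; the relevant bipartite edge probability is $1-(1-p'')^{\sqrt n -1}\sim 2\tfrac{\log\sqrt n}{\sqrt n}$, comfortably above the Erd\H{o}s--R\'enyi matching threshold. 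Unravelling each matched cycle gives the tooth. Thus the hard ``threading'' step in your plan is replaced by a classical bipartite matching, and no absorption machinery is needed.
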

More generally, we get to within a factor of $2+o(1)$ of the threshold
for containing the comb of order $k$ for every $k\geq K_0(n)$ (see Remark~\ref{rem-gen-comb}).
In the companion paper~\cite{KLW} we treat the complementary range of $k$ and conclude that for any $k=k(n)$ the threshold is $O(\frac{\log n}n)$.



\subsection{Notation and organization}
On occasion we will write $f_n \lesssim g_n$ instead of $f_n=O(g_n)$ for brevity (similarly for $f_n \gtrsim g_n$); $f_n \sim g_n$ denotes $f_n = (1+o(1))g_n$, and $f\asymp g$ denotes $f_n \lesssim g_n \lesssim f_n$.

The rest of this paper is organized as follows. In \S\ref{sec-renewals} we establish Theorem~\ref{thm-renewals}, among other results on renewal processes with and without replacement. Sections~\ref{sec-second-moment-framework}--\ref{sec:h=0-reduction} are devoted to the analysis of the second moment of the number of $k$-cycle factors. The application of the small subgraph conditioning method (and its consequences for contiguity) appears in \S\ref{sec-small-subgraph}, and concludes the proof of Theorem~\ref{mainthm-cycle-factor}, as well as Corollary~\ref{maincoro-contig}. Section~\ref{sec-gnp-comb} contains the proof of the Comb Conjecture (Theorem~\ref{mainthm-comb}).

\section{Renewal processes with and without replacement}\label{sec-renewals}
In this section we obtain various results on the probability of the event $\hit_k$ that a renewal process $(S_t)$ hits a given value $k$ (i.e., $S_t=k$ for some $t$), which in particular establish Theorem~\ref{thm-renewals} that will be used in our later arguments; see \S\ref{sec-intersection}.
%
We first state the main results of this section.  Note that we will require estimates of $\P(\hit_k)$ in which the error terms are uniform over a range of sets $X$, which seems to require  a new result even in the standard setting.

We approach the question by coupling the two models, with and without replacements,  in Part~(b) of the following theorem. Here, $[z^k]$ denotes extraction of the coefficient of $z^k$. Theorem~\ref{thm-renewals-estimates} will provide estimates for the coefficients. We assume that all variables are functions of $n$.
\begin{theorem}
  \label{thm-renewals-connection}
  Let   $x_1,\ldots,x_m$ be positive integers with $\sum_{j=1}^m x_j = n$.
Write $p_\ell = \frac1m |\{j \,:\, x_j=\ell\}|$ and define  $f(z)= \sum_{\ell\ge 1} p_\ell z^\ell$. For $\sigma:[m]\to [m]$ define
\[
Y_t =Y_t(\sigma) = \sum_{j \leq t} x_{\sigma(j)} \quad\mbox{for $t=1,\ldots,m$}\,,
\]
and let  $R_k$ equal $\P\big(k \in \{Y_1,\ldots,Y_m\}\big)$ when   $\sigma$ is   selected u.a.r., and let   $P_k$ equal $\P\big(k \in \{Y_1,\ldots,Y_m\}\big)$ conditional upon $\sigma$ being a    permutation of $[m]$. Then
\begin{enumerate}[(a)]
\item $R_k= [z^k] \big(1-f(z)\big)^{-1}$;
\item as $n\to\infty$, provided that $k=o(\sqrt m)$ we have
\[
 P_k = R_k -\big(m^{-1} +O(k^2/m^2)\big)[z^k]\frac{f(z^2)-f(z)^2}{\big(1-f(z)\big)^3}  +O(k^4/m^2)\,.
\]
 Here, the constants implicit in the $O(\cdot)$ are absolute, in particular not depending on the $x_j$'s.
\end{enumerate}
\end{theorem}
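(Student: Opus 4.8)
The plan has a short first part and a longer second part. Part~(a) is the classical renewal identity: since every $x_j\ge1$, the partial sums $Y_1<Y_2<\cdots$ are strictly increasing, so the events $\{Y_t=k\}$ are pairwise disjoint; and when $\sigma$ is a uniform function the increments $x_{\sigma(1)},x_{\sigma(2)},\dots$ are i.i.d.\ with probability generating function $f$, so $\P(Y_t=k)=[z^k]f(z)^t$. Summing over $t\ge1$ (only $t\le k$ contribute, as $f(0)=0$) gives $R_k=[z^k]\sum_{t\ge1}f(z)^t=[z^k](1-f(z))^{-1}$ for $k\ge1$.

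For part~(b), the first step is to recast $P_k$ enumeratively. Conditioning $\sigma$ to be a bijection makes $(\sigma(1),\dots,\sigma(t))$ a uniformly random injection $[t]\hookrightarrow[m]$, so $\P(Y_t=k)$ depends only on which $t$-element set is used and equals $\binom mt^{-1}[z^k]e_t$, where $e_t=e_t(z^{x_1},\dots,z^{x_m})$ is the $t$-th elementary symmetric polynomial, whose coefficient of $z^k$ counts the $t$-subsets of $\{x_j\}$ with sum $k$. Every such subset has sum $\ge t$, so $[z^k]e_t=0$ for $t>k$ and
\[
P_k=\sum_{t\le k}\binom mt^{-1}[z^k]e_t,\qquad R_k=\sum_t m^{-t}[z^k]F(z)^t,\quad F(z):=\textstyle\sum_j z^{x_j}=m f(z).
\]
Then I would expand each factor. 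From the identity $\sum_t e_tw^t=\prod_j(1+wz^{x_j})=\exp\!\big(\sum_{r\ge1}\tfrac{(-1)^{r-1}}{r}F(z^r)w^r\big)$ (Newton), extracting $[w^t]$ and isolating the leading correction gives $t!\,e_t=F(z)^t-\binom t2F(z^2)F(z)^{t-2}+\mathcal E_t$, where $\mathcal E_t$ gathers all terms involving a power sum $F(z^r)$ with $r\ge3$ or two factors $F(z^2)$; and $m^t\binom mt^{-1}=\prod_{i=1}^{t-1}(1-i/m)^{-1}=1+\binom t2/m+O(t^4/m^2)$ uniformly for $t=o(\sqrt m)$, hence for all $t\le k$. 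Multiplying out and summing over $t$, using $m^{-t}F(z)^t=f(z)^t$, $m^{-t}F(z^2)F(z)^{t-2}=m^{-1}f(z^2)f(z)^{t-2}$ and $\sum_t\binom t2 w^t=w^2(1-w)^{-3}$, two contributions of size $\Theta(k^2/m)$ appear: $+\tfrac1m[z^k]\tfrac{f(z)^2}{(1-f)^3}$ (the $\binom t2/m$ factor paired with $F(z)^t$) and $-\tfrac1m[z^k]\tfrac{f(z^2)}{(1-f)^3}$ (the $-\binom t2F(z^2)F(z)^{t-2}$ term). Their triple poles at $z=1$ cancel, however, and they combine to the asserted correction $-\tfrac1m[z^k]\tfrac{f(z^2)-f(z)^2}{(1-f)^3}$, now only of size $\Theta(1/m)$; keeping both of these is therefore essential.

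The remaining task---the one real difficulty---is to bound $\mathcal E_t$, the $O(t^4/m^2)$ term, and all cross terms by $O(k^4/m^2)$ \emph{in total}, with an \emph{absolute} implied constant valid for every multiset $X$; it is this uniformity (not mere asymptotics in $k$) that the later application needs, and it forces one to use exact power-series inequalities in place of residue asymptotics. The workhorse is the elementary bound $[z^k]\,g(z)(1-f(z))^{-(p+1)}\le\binom{k+p}{p}\,g(1)$ for any $g$ with non-negative coefficients, which follows from $[z^k](1-f)^{-(p+1)}\le\binom{k+p}{p}R_k$ together with $R_\ell\le1$. The structural point is that each deviation from the main term---an extra power sum $F(z^r)$, $r\ge2$, in $t!\,e_t$ (contributing $r-1$ ``units''), or a positive-order term of the expansion of $m^t\binom mt^{-1}$---carries a net factor $m^{-1}$ per unit, while the $F(z^r)$-factors simultaneously raise the minimal $z$-degree, so that $[z^k]$ annihilates the term for $t>k$ and the attendant binomial weights shrink to $k^{O(1)}$; hence a deviation of total size $B\ge1$ contributes $O\!\big((k^2/m)^B\big)$ to $P_k$. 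The two size-$1$ deviations are exactly the two terms kept above; the number of deviations of each size is subexponential in $B$; and since $k=o(\sqrt m)$ makes $k^2/m\to0$, the sum of all deviations of size $\ge2$ is geometric and $O((k^2/m)^2)=O(k^4/m^2)$, the crude $O(t^4/m^2)$ term and the remaining cross terms being handled the same way. Putting the pieces together yields $P_k=R_k-\tfrac1m[z^k]\tfrac{f(z^2)-f(z)^2}{(1-f)^3}+O(k^4/m^2)$, which is the stated estimate---the $O(k^2/m^2)$ perturbation of the coefficient $m^{-1}$ in the theorem being harmlessly absorbed into the error, since $[z^k]\tfrac{f(z^2)-f(z)^2}{(1-f)^3}=O(k^2)$.
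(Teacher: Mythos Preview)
Your argument for Part~(a) is the standard one and matches the paper.  Your argument for Part~(b) is correct (modulo the slip that $m^t\binom{m}{t}^{-1}$ equals $t!\prod_{i=1}^{t-1}(1-i/m)^{-1}$, not $\prod_{i=1}^{t-1}(1-i/m)^{-1}$; the missing $t!$ is of course absorbed when you pass from $e_t$ to $t!\,e_t$), but the route is genuinely different from the paper's.

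The paper does not expand $P_k$ via symmetric functions at all.  Instead it couples the two processes probabilistically: under the with-replacement measure $\nu$, it conditions on the number $D$ of duplicates among $\sigma(1),\dots,\sigma(k)$, using $\nu(\hit_k\mid D=0)=\pi(\hit_k)=P_k$ and the easy moment estimates $\nu(D=1)=\binom{k}{2}/m+O(k^4/m^2)$, $\nu(D\ge2)=O(k^4/m^2)$.  The only nontrivial ingredient is $\nu(\hit_k\mid D\ge1)$, which it computes by passing to a ``planted duplicate'' measure $\mu$ (choose a random pair $(i,j)$ and force $\sigma(i)=\sigma(j)$); a short case analysis according to whether $i,j$ fall before or after the hitting time yields the exact identity $\mu(\hit_k)=\nu(\hit_k)+\binom{k}{2}^{-1}[z^k]\frac{f(z^2)-f(z)^2}{(1-f(z))^3}$, and substituting back gives the theorem.

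Your approach, by contrast, writes $P_k=\sum_{t\le k}\binom{m}{t}^{-1}[z^k]e_t$, invokes Newton's identities to expand $t!\,e_t$ in power sums $F(z^r)$, and expands $m^t/(m)_t$; the two first-order corrections combine to the same generating-function coefficient, and your ``unit-counting'' via $[z^k]\,g(z)(1-f)^{-(p+1)}\le\binom{k+p}{p}g(1)$ does give absolute $O(k^4/m^2)$ control on all higher deviations.  This is more algebraic and more systematic---in principle it reads off any finite number of correction terms---whereas the paper's coupling is probabilistically transparent (the correction \emph{is} the one-duplicate event) and sidesteps the somewhat delicate uniform bookkeeping of the symmetric-function tail, since the only estimates needed are elementary moment bounds on $D$.
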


The  list of constants and conditions in the following theorems is made longer than might be expected, because of our need  to apply them uniformly to a class of sets of numbers $\{x_j\}$.
\begin{theorem}
  \label{thm-renewals-estimates}
Define $m$, $n$ and $f(z)$ as in Theorem~\ref{thm-renewals-connection}.
Suppose that for some real constants $r>1$, $c>0$ and $c_0>0$, we have
\begin{enumerate}[(i)]
\item  $\sum_{\ell\ge 1} p_\ell r^\ell \le c $,
\item $f(z) \ne 1$ for $|z|\le r$ whenever $z\ne 1$, $z\in \C$, and
\item $\min_{|z|=r} |f(z)-1|\ge c_0^{-1} $   ($z\in \C$).
\end{enumerate}
Then for all $k\ge 1$
\[
|R_k-m/n| \le  c_0 r^{-k}
\]
and
\[
\left|[z^k]\frac{f(z^2)-f(z)^2}{(1-f(z))^3}- \frac{f'(1)- f'(1)^2+ f''(1)}{f'(1)^3}\right|  \le c_1 r^{-k/2}\,,
\]
where
\[
c_1=\max_{|z|=\sqrt r} \left|\frac{f(z^2)-f(z)^2}{ (1-f(z))^3}\right|\,.
\]
\end{theorem}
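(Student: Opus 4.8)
The plan is to pass to generating functions via Theorem~\ref{thm-renewals-connection}(a), which gives $R_k=[z^k](1-f(z))^{-1}$, and to read off both main terms ($m/n$ and $\frac{f'(1)-f'(1)^2+f''(1)}{f'(1)^3}$) as residues at $z=1$, the error in each case being the value of a Cauchy integral over a fixed circle. Note first that $f$ is a polynomial (the $x_j$ sum to $n$), that $f(1)=\sum_\ell p_\ell=1$, and that $f'(1)=\sum_\ell\ell p_\ell=\frac1m\sum_j x_j=n/m>0$; hence $1-f(z)$ has a simple zero at $z=1$, and by hypothesis~(ii) no other zero in $|z|\le r$. Only conditions~(ii) and~(iii) actually enter the argument below (the latter solely in the final bound); condition~(i) is present so that $c_1$, which the statement writes as a maximum, can in turn be bounded uniformly in $c,c_0,r$ over a whole family of sets $\{x_j\}$ when the lemma is applied.

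For the first bound, fix $0<\rho<1$ (the Maclaurin radius of $(1-f(z))^{-1}$ equals $1$ by~(ii)) and write
\[
R_k=[z^k]\frac{1}{1-f(z)}=\frac1{2\pi i}\oint_{|z|=\rho}\frac{dz}{(1-f(z))\,z^{k+1}}\,;
\]
deforming the contour out to $|z|=r$ crosses exactly the simple pole at $z=1$, whose residue is $\big(\mathrm{Res}_{z=1}(1-f(z))^{-1}\big)\big/1^{k+1}=-1/f'(1)=-m/n$. Hence
\[
R_k-\frac mn=\frac1{2\pi i}\oint_{|z|=r}\frac{dz}{(1-f(z))\,z^{k+1}}\,,
\]
and estimating the right-hand side by $\frac1{2\pi}\cdot(2\pi r)\cdot r^{-(k+1)}\cdot\max_{|z|=r}|1-f(z)|^{-1}=r^{-k}\max_{|z|=r}|1-f(z)|^{-1}$ and invoking~(iii) yields $|R_k-m/n|\le c_0 r^{-k}$ with no extraneous constant: the circumference $2\pi r$ cancels the $1/(2\pi)$ and one power of $r$.

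For the second bound the same scheme is applied to $\Phi(z):=\dfrac{f(z^2)-f(z)^2}{(1-f(z))^3}$, but the contour is now taken to be $|z|=\sqrt r$, so that $|z^2|=r$ keeps us inside the region controlled by~(ii)--(iii). The one genuine computation is the local behaviour at $z=1$: with $N(z)=f(z^2)-f(z)^2$ one checks, using $f(1)=1$, that $N(1)=N'(1)=0$ while $N''(1)=2\big(f'(1)-f'(1)^2+f''(1)\big)$, so $N$ has a zero of order $\ge2$ at $z=1$ with leading coefficient $f'(1)-f'(1)^2+f''(1)$; since $(1-f(z))^3$ has a zero of order exactly $3$ there, with leading term $-f'(1)^3(z-1)^3$, the would-be triple pole of $\Phi$ collapses to a simple one, with $\mathrm{Res}_{z=1}\Phi=-\dfrac{f'(1)-f'(1)^2+f''(1)}{f'(1)^3}$ (and $\Phi$ is analytic at $z=1$ in the degenerate case where this vanishes, which only makes the estimate easier). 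Repeating the contour argument---writing $[z^k]\Phi=\frac1{2\pi i}\oint_{|z|=\rho}\Phi(z)z^{-k-1}\,dz$ for $0<\rho<1$, pushing out to $|z|=\sqrt r$, and picking up the residue at $z=1$---gives
\[
[z^k]\frac{f(z^2)-f(z)^2}{(1-f(z))^3}-\frac{f'(1)-f'(1)^2+f''(1)}{f'(1)^3}=\frac1{2\pi i}\oint_{|z|=\sqrt r}\frac{\Phi(z)}{z^{k+1}}\,dz\,,
\]
whose modulus is at most $(\sqrt r)^{-k}\max_{|z|=\sqrt r}|\Phi(z)|=c_1 r^{-k/2}$, as claimed.

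I do not expect a serious obstacle: this is the textbook ``residue plus Cauchy estimate on a fixed circle'' routine, the only subtlety being the Taylor expansion of $N(z)=f(z^2)-f(z)^2$ at $z=1$, where the cancellations $N(1)=N'(1)=0$ are exactly what demote the pole of $\Phi$ from order $3$ to order $1$ and produce the precise coefficient $\frac{f'(1)-f'(1)^2+f''(1)}{f'(1)^3}$. Beyond that one must simply remember to route the second contour through $|z|=\sqrt r$ rather than $|z|=r$, and to keep the constants clean by exploiting the circumference factor in the arc-length bound.
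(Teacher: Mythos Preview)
Your proof is correct and follows essentially the same route as the paper. The paper packages the two estimates as Lemmas~\ref{l:contour} and~\ref{l:fsquared}, using a keyhole contour (outer circle, two radial segments, small circle around $z=1$) rather than your direct deformation from $|z|=\rho$ to the large circle, but the residue computation at $z=1$ and the arc-length bound on the outer circle are identical; your observation that $f$ is a polynomial (so holomorphy is automatic) is a clean way to dispatch what the paper handles via condition~(i).
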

Note that $f'(1)=n/m$.

After many twists and turns,  it is an implication of the proof of our main result   that the   formulae as above cannot possibly be extended to  the size-biased case without modification, because  otherwise a certain random variable involving $k$-cycle factors would have a negative variance. The negative term of order  $m^{-1}$ in $P_k$ is the problem. Fortunately, this term is cancelled out in the size-biased case, as follows.
\begin{theorem}
  \label{thm-renewals-size-biased}
Define   $x_1,\ldots,x_m$, $p_\ell$, $m$, $n$ and $f(z)$  as in Theorem~\ref{thm-renewals-connection}.
Let $J$ be the random variable given by $\P(J = j) = x_j / n$ for $j=1,\ldots,m$, let $\sigma$ be a random permutation of the indices $\{1,\ldots,m\}\setminus\{J\}$, and define
\[
\hat{Y}_t =\hat{Y}_t(\sigma) = x_J + \sum_{j \leq t-1} x_{\sigma(j)} \quad\mbox{for $t=1,\ldots,m$} \,.
\]
Set $Q_k=\P\big(k \in \{\hat{Y}_1,\ldots,\hat{Y}_m\}\big)$. Assume, for some positive constants   $r$,  $c$, $c_0$  and $\delta$   with $1+\delta<\sqrt{r }$,
\begin{enumerate}[(i)]
\item  $\sum_{\ell\ge 1} p_\ell r^\ell \le c $,
\item $|f(z)-1| \ge c_0^{-1}$ if $|z|\le r$ and $|z-1|\ge \delta$, $z\in \C$,
\item $|f'(z) -f'(1)|<f'(1)/2 $ if  $|z-1|< \delta$, $z\in \C$.
\end{enumerate}
Then as $n\to\infty$, provided that   $k\to\infty$ and   $k=o(\sqrt m)$, we have for any $\epsilon>0$
\[
\Big|Q_k - \frac{m}n \Big| \le     q(n)\,,
\]
where $q(n)$ is a function of $r$, $c$, $c_0$, $k$ and $\epsilon$ satisfying $q(n) \leq  2c_0 (r-\epsilon)^{-k} + O\left(k^4 / m^2\right) +o(1/m)$. In particular, $q(n)$ does not otherwise depend on the $x_j$'s.
\end{theorem}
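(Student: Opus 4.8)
The plan is to \emph{condition on the size-biased first step} $J$, thereby reducing $Q_k$ to an average of \emph{ordinary} (uniform-permutation) without-replacement renewal hitting probabilities for the $m-1$ reduced multisets $X\setminus\{x_j\}$, to which Theorems~\ref{thm-renewals-connection}(b) and~\ref{thm-renewals-estimates} apply. Given $\{J=j\}$ we have $\hat Y_1=x_j$ while $(\hat Y_2,\hat Y_3,\dots)$ is a uniform-permutation renewal without replacement over $X\setminus\{x_j\}$ started from $x_j$; hence $k$ is hit iff $x_j=k$, or $x_j<k$ and the reduced renewal hits $k-x_j$. Writing $P^{(j)}_\ell$ for the probability that level $\ell$ is hit by the uniform-permutation renewal without replacement over $X\setminus\{x_j\}$, this gives the exact identity
\[
Q_k=\sum_{j\,:\,x_j<k}\frac{x_j}{n}\,P^{(j)}_{k-x_j}+\frac{k}{n}\,mp_k,
\]
the last term being $O(kr^{-k})$ by hypothesis~(i). (Equivalently, grouping by the set $A$ of parts consumed before $k$ is reached and using $\sum_{i\in A}x_i=k$, one gets $Q_k=\tfrac{k}{n}\sum_t N_{t,k}/\binom{m-1}{t-1}$ with $N_{t,k}$ the number of $t$-subsets of $\{x_i\}$ summing to $k$; compare $P_k=\sum_t N_{t,k}/\binom{m}{t}$.) The mechanism is that, once this is averaged over $j$, the troublesome $\Theta(m^{-1})$ term $-C/m$ of $P_k$ (where $C=\tfrac{f'(1)-f'(1)^2+f''(1)}{f'(1)^3}$) is cancelled by the $\Theta(m^{-1})$ discrepancy produced when the leading ``density'' $m/n$ is re-weighted by the size-biasing.

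To apply the two theorems to $X\setminus\{x_j\}$ uniformly in $j$, note first that hypothesis~(i) forces $p_\ell\le cr^{-\ell}$, hence $\max_i x_i\le\log_r(mc)$ and $n\asymp m$. Fix any $\rho$ with $(1+\delta)^2\le\rho<r$. On the circles $|z|=\rho$ and $|z|=\sqrt{\rho}$ (both inside $1+\delta\le|z|\le r$), removing a single part perturbs $f$ by at most $\big(|f(z)|+|z|^{x_j}\big)/(m-1)=O\big((mc)^{\log_r\rho}/m\big)=o(1)$, uniformly in $j$; combined with hypothesis~(ii) for $|z-1|\ge\delta$ and hypothesis~(iii) for $|z-1|<\delta$ (via $f(z)-1=\int_1^z f'$), this shows that $X\setminus\{x_j\}$ meets the hypotheses of Theorem~\ref{thm-renewals-estimates} with radius $\rho$ and constants $2c$, $2c_0$, $c_1'=O(c^2c_0^3)$, uniformly in $j$. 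Since $k-x_j\le k=o(\sqrt m)$, Theorems~\ref{thm-renewals-connection}(b) and~\ref{thm-renewals-estimates} now yield, uniformly over $j$ with $x_j<k$,
\[
P^{(j)}_{k-x_j}=\frac{m-1}{n-x_j}-\frac{C^{(j)}}{m-1}+O\!\Big(\frac{k^4}{m^2}\Big)+O\!\big(c_0\,\rho^{-(k-x_j)}\big)+O\!\Big(\frac{c_1'\,\rho^{-(k-x_j)/2}}{m}\Big),
\]
where $C^{(j)}=\tfrac{(m-1)^2(S_2-x_j^2)}{(n-x_j)^3}-\tfrac{m-1}{n-x_j}$ is the analogue of $C$ for $X\setminus\{x_j\}$, with $S_2\deq\sum_i x_i^2$.

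Now average with the weights $x_j/n$ (extending all sums from $x_j<k$ to all $j$ costs only $O(kr^{-k})$ since each $\tfrac{m-1}{n-x_j}$ and $C^{(j)}$ is $O(1)$). The error terms are within budget, as $\sum_j\tfrac{x_j}{n}\rho^{-(k-x_j)}=\rho^{-k}\tfrac{m}{n}\sum_\ell\ell p_\ell\rho^\ell=O(\rho^{-k})$ by~(i) (likewise for the rest), producing an overall $O((r-\epsilon)^{-k})+O(k^4/m^2)+o(1/m)$ (choose $\rho=r-\epsilon$ when $r-\epsilon\ge(1+\delta)^2$; for larger $\epsilon$ the bound only improves). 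For the main terms, using $f'(1)=n/m$ and $f''(1)=(S_2-n)/m$, so that $C=\tfrac{m^2S_2}{n^3}-\tfrac{m}{n}$, the expansion $\tfrac{x_j}{n-x_j}=\sum_{i\ge0}x_j^{i+1}/n^{i+1}$ (with $\sum_i x_i^2=O(m)$, $\sum_i x_i^3=O(m\log m)$ and $n\asymp m$) gives
\[
\sum_{j}\frac{x_j}{n}\cdot\frac{m-1}{n-x_j}=\frac{m-1}{n}+\frac{(m-1)S_2}{n^3}+o(m^{-1})=\frac mn+\frac Cm+o(m^{-1}),
\]
while a parallel expansion yields $\sum_j\tfrac{x_j}{n}C^{(j)}=C+o(1)$, hence $\sum_j\tfrac{x_j}{n}\tfrac{C^{(j)}}{m-1}=\tfrac Cm+o(m^{-1})$. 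Subtracting, the two copies of $C/m$ cancel, leaving $Q_k=\tfrac mn+o(m^{-1})+O(k^4/m^2)+O((r-\epsilon)^{-k})$, as asserted.

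The principal obstacle is this final cancellation: it is exact only at the level of the precise second-order expansions, so one must retain the exact $m^{-1}$-coefficient of Theorem~\ref{thm-renewals-connection}(b) and carry the expansions of $\tfrac{m-1}{n-x_j}$ and $C^{(j)}$ to order $m^{-2}$ (the surviving residue is $-S_2/n^3=O(m^{-2})$, comfortably within the stated error). The secondary, bookkeeping-heavy difficulty is the uniformity of Theorems~\ref{thm-renewals-connection}(b) and~\ref{thm-renewals-estimates} across the family $\{X\setminus\{x_j\}\}_j$, in particular a uniform-in-$j$ lower bound for $|1-f^{(j)}(z)|$: this forces one to work on the slightly smaller circle $|z|=\rho<r$ (for a part of size $\asymp\log_r m$ the crude estimate $m\,|1-f(z)|-|z^{x_j}-1|$ can turn negative on $|z|=r$), and it is precisely why hypotheses~(ii) and~(iii) are split into a ``far from $1$'' and a ``near $1$'' regime — the latter also ensuring that $1-f^{(j)}$ keeps a simple zero at $z=1$ and no other zero in $|z-1|<\delta$.
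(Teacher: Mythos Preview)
Your proof is correct and follows essentially the same route as the paper's: condition on the size-biased first step, apply Theorems~\ref{thm-renewals-connection}(b) and~\ref{thm-renewals-estimates} to the reduced multiset $X\setminus\{x_j\}$ at a slightly shrunken radius $\rho<r$ (verifying their hypotheses uniformly in $j$ via the present (i)--(iii)), and then exhibit the cancellation of the $\Theta(m^{-1})$ correction against the size-biased reweighting of the density $m/n$. The only cosmetic differences are that you index the conditioning sum by $j$ rather than by the value $\ell=x_J$, and that you make the cancellation explicit through $S_2=\sum_i x_i^2$ and the closed forms $C=\tfrac{m^2S_2}{n^3}-\tfrac{m}{n}$, $C^{(j)}=\tfrac{(m-1)^2(S_2-x_j^2)}{(n-x_j)^3}-\tfrac{m-1}{n-x_j}$, whereas the paper carries out the same cancellation via the identity $\sum_\ell \ell^2 p_\ell=f'(1)+f''(1)$.
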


%

In the next subsection, we give some estimates for the generating function coefficients appearing in Theorem~\ref{thm-renewals-connection}. In \S\ref{sec-renewals-proofs} we complete the proof of all three theorems and the corollary.

\subsection{Singularity analysis}\label{sec-complex}

Assume that  $p_0=0$ and $p_\ell\ge 0$ for $\ell\ge 1$, with $\sum_{\ell\ge 0}p_\ell=1$.    Let $f(z)=\sum_{\ell\ge 1} p_\ell z^\ell$. Note that $f(1)=1$. We are interested in $u_n:=[z^n](1-f)^{-1}$ for renewals without replacement, and some coefficients in related generating functions for renewals with replacement.

In the following, $z\in \mathbb{C}$. Suppose that $R>1$, $f$ is holomorphic in $|z|< R$ (equivalently, $p_\ell=O(R^{-\ell})$), and ${\rm gcd}\{\ell:p_\ell>0\} = 1$.
 Kendall~\cite{Kendall} showed that under these conditions, $u_\infty:=\lim_{n\to\infty} u_n$ exists, and that
$\sum_{n\ge 0} (u_n-u_\infty)z^n$ has radius of convergence strictly greater than 1. It follows that  $|u_n-u_\infty|=O(r^{-n})$ for some $r>1$. This can easily be proved using the  method of `subtracting the singularity.' (See Wilf~\cite{Wilf}*{\S5.2} for a description of this method.)

Baxendale~\cite{Baxendale}*{Theorem 3.2} examined the analyticity of the function $\sum_{n\ge 1}  (u_n-u_{n-1})z^n $, and hence obtained explicit bounds on $r$ and on $\sum_{n\ge 0} (u_n-u_\infty)z^n$ ($|z|=r$) under certain conditions.  His conditions included $p_1>0$ so we cannot apply his general theorem to our case. The approach could however be adapted to our case; it is closely related to the method of subtracting the singularity.
Such explicit results can be used to obtain bounds that hold uniformly for a family of functions, which is what we desire here. We will use the alternative approach of contour integration. Flajolet and Sedgewick~\cite{SF}*{Lemma~IX.2 (p.~668)} show  how to obtain uniform estimates using this approach, but their result is not quite suitable for our current purpose. Additionally, we have the opportunity to use a simpler contour in this case.
\begin{lemma}\label{l:contour}
Let $ r>1$ and assume that
\begin{enumerate}[\indent(i)]
\item\label{item:contour:1} $f$ is holomorphic in $|z|< r$ and continuous on $|z|\le r$;
\item\label{item:contour:2} $f(z) \ne 1$ for $|z|\le r$ when $z\ne 1$.
\end{enumerate}
Then, with $u_\infty= \lim_{n\to\infty}u_n$,
\begin{enumerate}[\indent(a)]
\item\label{item:contour:a} $u_\infty= f'(1)^{-1}$, and  $\sum_{n\ge 0} (u_n-u_\infty)z^n$ has radius of convergence at least $r$.
\end{enumerate}
If in addition
\begin{enumerate}[\indent(i)]\setcounter{enumi}{2}
\item\label{item:contour:3} $\min_{|z|=r} |f(z)-1|=c_0>0$,
\end{enumerate}
then
\begin{enumerate}[\indent(a)]\setcounter{enumi}{1}
\item\label{item:contour:b} $\displaystyle
|u_n-u_\infty|\le \frac{1}{c_0r^n}.
$
\end{enumerate}
\end{lemma}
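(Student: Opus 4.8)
The plan is to realize the ``subtract the singularity'' idea through a single contour integral, which delivers both parts at once; this avoids the general (and for us slightly unsuitable) transfer lemmas of Flajolet--Sedgewick, and keeps the constants explicit, which we will need for uniformity later. Write $g(z):=(1-f(z))^{-1}$. The first step is to locate the singularities of $g$ in $\{|z|\le r\}$: since $p_0=0$ and $\sum_{\ell\ge1}p_\ell=1$, the number $f'(1)=\sum_{\ell\ge1}\ell p_\ell$ satisfies $1\le f'(1)<\infty$ (finiteness because $f$, hence $f'$, is holomorphic on $|z|<r$ with $r>1$), so $1-f$ has a zero of order exactly $1$ at $z=1$, and by hypothesis (ii) no other zero in $|z|\le r$. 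Thus $g$ is holomorphic on $\{|z|\le r\}\setminus\{1\}$ with a simple pole at $z=1$, and expanding $1-f(z)=-f'(1)(z-1)\bigl(1+O(z-1)\bigr)$ near $z=1$ gives $\operatorname{Res}_{z=1}g=-1/f'(1)$. Accordingly I set $u_\infty:=f'(1)^{-1}$, the value the lemma predicts for the limit.

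For any fixed $\rho$ with $1<\rho<r$, I would apply the residue theorem to $g(z)/z^{n+1}$ on the circle $|z|=\rho$ (on which $1-f\ne0$ by (ii), so the integrand is regular there). Inside this circle the integrand has exactly two poles: one of order $n+1$ at $z=0$, contributing the residue $[z^n]g(z)=u_n$ (here $g$ is holomorphic at $0$, since $f(0)=p_0=0\ne1$), and the simple pole at $z=1$, which contributes $\operatorname{Res}_{z=1}g=-u_\infty$ (the factor $z^{-n-1}$ equals $1$ there). Hence
\[
u_n-u_\infty=\frac{1}{2\pi i}\oint_{|z|=\rho}\frac{g(z)}{z^{n+1}}\,dz,
\]
so the trivial length-times-sup estimate gives $|u_n-u_\infty|\le\rho^{-n}\max_{|z|=\rho}|g|$, a finite bound by continuity of $g$ on the compact circle $|z|=\rho$. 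Since this holds for every $\rho<r$, the series $\sum_{n\ge0}(u_n-u_\infty)z^n$ has radius of convergence at least $r$; in particular, fixing any $\rho>1$, $u_n\to u_\infty=f'(1)^{-1}$. This proves part (a).

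For part (b) the plan is to push the contour all the way out to $|z|=r$. (The positivity asserted in (iii) is in fact automatic given (i)--(ii), since $|1-f|$ is continuous and strictly positive on the compact circle $|z|=r$; (iii) merely names the constant $c_0$.) This boundary step is the one genuinely delicate point, and I expect it to be the main obstacle: $f$ is assumed only continuous, not holomorphic, on $|z|=r$, so the residue theorem cannot be invoked there directly. Instead I would use that the displayed identity holds for \emph{every} $\rho\in(1,r)$, together with the fact that $g=1/(1-f)$ is continuous on the closed annulus $\{\rho_0\le|z|\le r\}$ for any $\rho_0\in(1,r)$ --- using (ii) in the interior and (iii) on $|z|=r$ to keep $1-f$ bounded away from $0$ --- to pass to the limit $\rho\uparrow r$ under the integral sign (dominated convergence after parametrizing the circles), obtaining
\[
u_n-u_\infty=\frac{1}{2\pi i}\oint_{|z|=r}\frac{g(z)}{z^{n+1}}\,dz\,.
\]
Since $|g(z)|=|1-f(z)|^{-1}\le c_0^{-1}$ on $|z|=r$ by (iii), the same length-times-sup estimate yields $|u_n-u_\infty|\le c_0^{-1}r^{-n}$, as claimed. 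Everything apart from this boundary-limit step is routine residue bookkeeping.
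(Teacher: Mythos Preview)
Your proof is correct and follows essentially the same strategy as the paper: isolate the simple pole of $(1-f)^{-1}$ at $z=1$ via contour integration and bound the remaining integral on a large circle. The only cosmetic differences are that the paper uses a keyhole contour (big circle at $|z|=r$, slit along the real axis to a small circle about $z=1$) and applies Cauchy's formula directly on $|z|=r$ using continuity on the closed disc, whereas you take a single circle $|z|=\rho>1$, pick up both residues at once via the residue theorem, and then let $\rho\uparrow r$ by dominated convergence; your route is arguably a little cleaner and makes the boundary step more explicit.
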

\begin{proof}
Define $g(z)=(1-f(z))/(1-z)$ as a formal power series, so
$g(z)= \sum_{n\ge 0} g_n z^n$ with
\[
g_n = 1-\sum_{1\le \ell\le n} p_\ell = \sum_{\ell\ge n+1} p_\ell\qquad\mbox{for $n\ge 0$}\,.
\]
By~\eqref{item:contour:1}, this series for $g(z)$ has radius of convergence at least $r$, and $g(1)\ge g_0=1\ne 0$.

Recall that $u_n=[z^n](1-f)^{-1}$. For $|z|<r$, $z\ne 1$, we have
\begin{equation}\label{equiv}
\frac{1}{1-f(z)}=\frac{1}{g(z)(1-z)}.
\end{equation}
The method of subtracting the singularity now gives the conclusion on the radius of convergence in (a), but we omit details  as we need the more precise bound in (b). For this, we use the Cauchy integral formula to extract the coefficient of $z^n$.

Let $\cC$ be a contour that passes around the circle $|z|=r $ in a counterclockwise direction beginning at $R$, then along the real line from $z=r$ to $z=1+\epsilon$ (for some $\epsilon>0$), then once around the circle $|z-1|=\epsilon$ clockwise, then back along the real line from $z=1+\epsilon$ to $z=r$. By~\eqref{item:contour:1} and~\eqref{item:contour:2}, $(1-f)^{-1}$ is holomorphic on the interior of $\cC$ and continuous on its closure, so
\[
[z^n](1-f(z))^{-1} = \frac{1}{2\pi i}\oint_\cC \frac{z^{-n-1}}{ 1-f(z)}\, dz.
\]
First, note that $(1-f(z))^{-1}$ is bounded above on the outer circle since $1-f$ is   non-zero there (and $f$ is continuous). Hence, that part of the contour integral on the outer circle is $O(r^{-n})$. In fact, under assumption~\eqref{item:contour:3}, its absolute value is at most $(2\pi r)r^{-n}/c_0$. The part on the straight lines cancels since~\eqref{equiv} gives a unique value for the function there. So
$$
\frac{1}{2\pi i}\oint_\cC \frac{z^{-n-1}}{ 1-f(z)}\, dz = O(r^{-n})- \frac{1}{2\pi i}\oint_{|z-1|=\epsilon} \frac{z^{-n-1}}{ 1-f(z)}\, dz
$$
where the direction of integration is counterclockwise.
Since $g$ is analytic near 1 and $g(1)\ne 0$,  we may expand  $1/g(z)$ as a power series about $z=1$, and we see that the integrand has a simple pole at $z=1$ with principal part  $ (g(1) (1-z))^{-1} $. So
$$
[z^n](1-f(z))^{-1} = O(r^{-n}) -\mathrm{Res}_1 \frac{1}{(1-z)g(1)} = O(r^{-n})+g(1)^{-1}.
$$
It follows that $u_\infty=g(1)^{-1}$. Noting that
$$
g(1)= \sum_{n\ge 0} \sum_{\ell\ge n+1} p_\ell =\sum_{\ell\ge 1}\ell p_\ell=f'(1),
$$
we deduce~\eqref{item:contour:a}. Recalling the above given explicit bound  on the error term  $O(r^{-n})$ yields~\eqref{item:contour:b}.
 \end{proof}

For $f$ satisfying~\eqref{item:contour:1}, Lemma~\ref{l:contour} shows that the radius of convergence of $(1-f)^{-1}$ will be at least  the smallest   value of $|z|\le r$ where $z\ne 1$ and $f(z)=1$ (if any such $z$ exist). It seems reasonable to suppose that this will be equality, though our argument does not prove this.

For the case of renewals with replacement, we will make use of the following, which again uses     essentially standard singularity analysis.
\begin{lemma}\label{l:fsquared}
Assume that $f$  satisfies conditions~\eqref{item:contour:1} and~\eqref{item:contour:2} of Lemma~\ref{l:contour}. Then
\[
[z^n]\frac{f(z^2)-f(z)^2}{(1-f(z))^3}= \frac{f'(1)- f'(1)^2+ f''(1)}{f'(1)^3} +O(r^{-n})\,.
\]
Moreover, the $O(r^{-n})$-term is in absolute value at most $c_1 r^{-n/2}$ where
the constant $c_1$ is given by
\[
c_1=\max_{|z|=\sqrt r} \left|\frac{f(z^2)-f(z)^2}{ (1-f(z))^3}\right|\,.
\]
\end{lemma}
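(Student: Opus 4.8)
The plan is to mirror the proof of Lemma~\ref{l:contour}: perform singularity analysis on $F(z):=\frac{f(z^2)-f(z)^2}{(1-f(z))^3}$, whose only singularity in the closed disc $|z|\le\sqrt r$ is (at worst) a simple pole at $z=1$, and then extract $[z^n]F$ by contour integration over $|z|=\sqrt r$. That $F$ is holomorphic on $\{|z|<\sqrt r\}\setminus\{1\}$ and continuous on $\{|z|\le\sqrt r\}\setminus\{1\}$ follows from the hypotheses: since $1<\sqrt r<r$, condition~(ii) of Lemma~\ref{l:contour} gives $f(z)\ne 1$ on $|z|\le\sqrt r$ for $z\ne 1$, while $f$ holomorphic in $|z|<r$ makes $z\mapsto f(z^2)$ holomorphic in $|z|<\sqrt r$ and continuous on $|z|\le\sqrt r$. (This is precisely why the relevant contour, and hence the exponential rate, involves $\sqrt r$ rather than $r$: on $|z|=\sqrt r$ the factor $f(z^2)$ sits on the boundary of the domain of $f$.)

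First I would identify the principal part of $F$ at $z=1$. As in the proof of Lemma~\ref{l:contour}, write $1-f(z)=(1-z)g(z)$ with $g(z)=(1-f(z))/(1-z)$ holomorphic in $|z|<r$ and $g(1)=f'(1)\ne 0$. For the numerator, Taylor-expanding $\phi(z):=f(z^2)-f(z)^2$ about $z=1$ gives $\phi(1)=0$, $\phi'(1)=2f'(1)-2f'(1)=0$, and $\phi''(1)=2\big(f'(1)-f'(1)^2+f''(1)\big)$, so $\phi(z)=(1-z)^2\ell(z)$ with $\ell$ holomorphic near $1$ and $\ell(1)=f'(1)-f'(1)^2+f''(1)$. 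Hence near $z=1$,
\[
F(z)=\frac{\ell(z)}{(1-z)\,g(z)^3}=\frac{A}{1-z}+\big(\text{holomorphic near }1\big),\qquad A:=\frac{\ell(1)}{g(1)^3}=\frac{f'(1)-f'(1)^2+f''(1)}{f'(1)^3}
\]
(with $A=0$, the pole removable, in the degenerate case where $\phi$ vanishes to higher order; the argument below is unaffected).

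Next I would extract $[z^n]F$ using the keyhole contour $\cC$ of Lemma~\ref{l:contour}, now with outer radius $\sqrt r$: the circle $|z|=\sqrt r$ counterclockwise, in and out along the real axis to $1+\epsilon$, and a small clockwise circle $|z-1|=\epsilon$. Since $F(z)z^{-n-1}$ is single-valued, holomorphic inside $\cC$ except for the pole at $z=0$, and continuous up to the boundary $|z|=\sqrt r$, Cauchy's formula gives $[z^n]F(z)=\tfrac{1}{2\pi i}\oint_\cC F(z)z^{-n-1}\,dz$; the two straight segments cancel, so this equals $\tfrac{1}{2\pi i}\oint_{|z|=\sqrt r}F(z)z^{-n-1}\,dz-\operatorname{Res}_{z=1}\!\big(F(z)z^{-n-1}\big)$. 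The residue at $z=1$ equals $-A$ (as $1^{-n-1}=1$), while the circular integral is at most $\tfrac{1}{2\pi}\cdot 2\pi\sqrt r\cdot c_1(\sqrt r)^{-n-1}=c_1 r^{-n/2}$ in modulus, with $c_1=\max_{|z|=\sqrt r}|F(z)|$. Therefore $[z^n]F(z)=A+E_n$ with $|E_n|\le c_1 r^{-n/2}$, which is the asserted estimate.

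I expect no genuine difficulty here, only two points of care: (i) checking that $f(z^2)-f(z)^2$ vanishes to order at least two at $z=1$ (so $F$ has merely a simple pole and the constant comes out exactly as $\tfrac{f'(1)-f'(1)^2+f''(1)}{f'(1)^3}$, with no spurious second-order contribution), and (ii) the technical point that $f(z^2)$ is only \emph{continuous}, not holomorphic, on $|z|=\sqrt r$, which is handled exactly as in Lemma~\ref{l:contour} (Cauchy's theorem for functions continuous on the closure, or integrating over $|z|=\rho$ and letting $\rho\uparrow\sqrt r$). Beyond that, the lemma is a routine variant of Lemma~\ref{l:contour}.
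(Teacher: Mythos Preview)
Your proposal is correct and follows essentially the same approach as the paper: Taylor-expand the numerator at $z=1$ to see it has a double zero (so that $F$ has only a simple pole there with the stated residue), then integrate over the keyhole contour with outer circle $|z|=\sqrt r$ to extract the coefficient, bounding the outer-circle contribution by $c_1 r^{-n/2}$. Your direct computation of $\phi''(1)$ is in fact slightly cleaner than the paper's, which writes out the separate expansions of $f(z^2)$ and $f(z)^2$ (and actually has those two expansions swapped, though the difference --- and hence the result --- is unaffected).
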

\begin{proof}
Note that $c_1$ exists by condition~\eqref{item:contour:2} of Lemma~\ref{l:contour} and the continuity of $f$.
 Since $f(z)$ is analytic near $z=1$, and $f(1)=1$,  in any small neighborhood of $1$ it can be expanded as
\[
f(z)=1 +f'(1)(z-1)+\frac12 f''(1)(z-1)^2 +O((z-1)^3)
\]
and we have
\begin{eqnarray*}
f(z^2)&=&1 +2f'(1)(z-1)+ (f'(1)^2+ f''(1))(z-1)^2 +O((z-1)^3)\,,\\
f(z)^2& =&1 +2f'(1)(z-1)+ (f'(1) + 2f''(1))(z-1)^2 +O((z-1)^3)
\end{eqnarray*}
in the same neighborhood.
We evaluate the coefficient following the proof of Lemma~\ref{l:contour}, using almost the same contour: define $\cC'$ the same as $\cC$ but with   $|z|=r$ replaced by  $|z|=\sqrt r$. Then
\[
[z^n]\frac{f(z^2)-f(z)^2}{(1-f(z))^3} = \frac{1}{2\pi i}\oint_{\cC'} \frac{f(z^2)-f(z)^2}{ (1-f(z))^3z^{n+1}}\, dz\,.
\]
The part of the integral on  $|z|=\sqrt r$ is bounded above in absolute value by $c_1/r^{n/2}$.
Near $z=1$, after expanding $1/g(z)^3=1/g(1)^3+O(z-1)$ and using the above expansions, we see the integrand has a simple pole, with residue
$-\big(f'(1)- f'(1)^2+ f''(1)\big)/  f'(1)^3$. The result follows.
\end{proof}

\subsection{Proofs of renewal results}\label{sec-renewals-proofs}

Here we are interested in the hitting probability $\P(\hit_k)$ (where $\hit_k=\bigcup_{j\ge 0} \{S_j=k\}$) for the above defined renewal sequence $(S_i)_{i\ge 1}$ without replacement.

\begin{proof}[\textbf{\em Proof of Theorem~\ref{thm-renewals-connection}}]
Since the $x_j$ are positive integers, to transfer results on $\P(\hit_k)$ from `with replacement' to `without replacement', we can restrict ourselves to considering the first $k$ holding times, $Y_1,\ldots , Y_k$,   regarded as a random sequence. We use $\nu$ to denote the probability measure in the case of renewals with replacement, where  each $Y_j=x_{\sigma(j)}$ is independently chosen u.a.r.\  from $[m]$, and $\pi$ in the case that $\sigma$ is a random permutation of $[m]$.
Define a \emph{duplicate} in $\sigma$ to be a pair $(i,j)$, $i<j\le k$, for which $\sigma(i)=\sigma(j)$, and let   $D$ denote the number of   duplicates. Then
\begin{equation}\label{total}
\nu(\hit_k)=\nu(\hit_k\mid D=0)\nu(D=0)+  \nu(\hit_k\mid D\ge 1)\nu(D\ge 1)\,,
\end{equation}
and clearly
$$
\nu(\hit_k\mid D=0)  = \pi(\hit_k)\,.
$$
Note that $\E_{\nu}{D} =\binom{k}{2}/m$ and $\E_{\nu}\binom{D}{2} =O(k^4/m^2)$ which imply by inclusion-exclusion that (recalling $k=o(\sqrt m)$ from the theorem's hypothesis)
\begin{equation}\label{D1andD2}
\nu(D=1)=\binom{k}{2}m^{-1}+O(k^4/m^2)\,,\qquad \nu(D\ge 2)=O(k^4/m^2)\,.
\end{equation}

To make use of~\eqref{total}, it only remains to estimate $\nu(\hit_k\mid D\ge 1)$. For this, we define a modified probability space in which the probability of $\sigma$ is weighted by $D(\sigma)$. Let $\Omega=\{(\sigma, i,j): (i,j)\mbox{ is a duplicate in }\sigma\}$, and endow $\Omega$ with the uniform probability measure. By symmetry, to generate $(\sigma, i,j)$ at random from $\Omega$,  we can first select $(i,j)$ u.a.r.\ from $[k]$ and $r$ u.a.r.\ from $[m]$,  then set $\sigma(i)=
\sigma(j)=r$,  and finally generate the rest of $\sigma$ by sampling independently from $[m]$ as with $\nu$. Let $\mu$ denote this probability measure on $\Omega$.
Considering this generation procedure, and noting that it ``plants'' a duplicate, it is immediate that
\[
\mu(D\ge 2)\le \E_{\mu}\binom{D}{2} =O(k^2/m) =o(1)
\]
since the expected number of pairs of non-planted duplicates is $O(k^2/m )$  and  $k=o(\sqrt m)$. Noting the equivalence of $\mu$ and $\nu$ conditioned on the event $D=1$, this implies
\begin{align}
\mu(\hit_k)&=\mu(\hit_k\mid D=1)+O(k^2/m)=\nu(\hit_k\mid D=1)+O(k^2/m)
=\nu(\hit_k\mid D\ge1)+O(k^2/m) \nonumber
\end{align}
in view of~\eqref{D1andD2}.
Substituting this into~\eqref{total}, we find
\begin{align}
 \pi(\hit_k) &= \nu(\hit_k\mid D=0)  = \frac{ \nu(\hit_k)-\nu(\hit_k\mid D\ge 1)\nu(D\ge 1) }{ \nu(D=0)}\nonumber \\
&=  \frac{ \nu(\hit_k)-\big(\mu(\hit_k )+O(k^2/m)\big)\nu(D\ge 1) }{ 1-\nu(D\ge 1)}\,.
 \label{mu-Hk}
\end{align}

 So we may concentrate on estimating $\mu(\hit_k)$.
Define $T$ on $\hit_k$ such that $\sum_{s=1}^T x_{\sigma(s)}= k$, and partition the event $\hit_k$ in $\Omega=\{(\sigma, i,j)\}$ into events $A$, $B$ and $C$ as follows.
\[
A = \{  i<j\le T\}\,,\quad B = \{ i\le T<j\}\,, \quad C = \{ T<i<j\}\,.
\]
\newcommand{\remove}[1]{}
\remove{
\begin{align*}
\mbox{in }A: \quad &\sum_{s=1}^j x_{\sigma(s)} \le k\\
\mbox{in }B:  \quad &\sum_{s=1}^i x_{\sigma(s)} \le k\mbox{ \ and \ } \sum_{s=1}^j x_{\sigma(s)} > k \\
\mbox{in }C: \quad  &\sum_{s=1}^i x_{\sigma(s)} > k\,.
\end{align*}
}
Letting $\hit_k(t)$ denote the event that $\sum_{s=1}^t  x_{\sigma(s)}=k$, we can sum over $t$ and the value $\ell$ of $x_{\sigma(i)}$ to get
$$
\binom{k}{2} \mu(A) =  \sum_{t\ge 1}\sum_{l\ge 1}\binom{t}{2}\nu\big(\hit_{k-2\ell}(t-2)\big) p_{\ell}\,.
$$
Here the prefactor arises because   $\binom{k}{2}^{-1}$ is the probability of picking the pair $(i,j)$ in the above description of generating of a random element of $\Omega$.
It follows by elementary considerations that
\[
\mu(A) = \binom{k}{2}^{-1}[z^k]\frac{f(z^2)}{\big(1-f(z)\big)^3}
\]
with $f$ as in Theorem~\ref{thm-renewals-connection}.
Similarly,
$$
\binom{k}{2}\big(\mu(B) +\mu(C)\big)=  \sum_{t\ge 1}\sum_{l\ge 1}t(k-t)\nu\big(\hit_{k- \ell}(t-1)\big) p_{\ell}+ \binom{k-t}{2}\nu\big(\hit_{k }(t )\big) p_{\ell}\,.
$$
Rewriting $\binom{k-t}{2}$ as $\binom{k}{2}-t(k-t)-\binom{ t}{2}$  and using
$$
\sum_{l\ge 1} \nu\big(\hit_{k- \ell}(t-1)\big) p_{\ell}= \nu\big(\hit_{k }(t )\big)=\sum_{l\ge 1} \nu\big(\hit_{k }(t )\big) p_{\ell}\,,
$$
we find that the terms containing $t(k-t)$ cancel. The remaining ones are
$$
 \sum_{t\ge 1}\sum_{l\ge 1}\binom{k }{2} \nu\big(\hit_{k }(t )\big) p_{\ell}=
  \sum_{t\ge 1} \binom{k}{2} \nu\big(\hit_{k }(t )\big) = \binom{k}{ 2}\nu (\hit_{k } )
$$
and
\begin{align*}
-\sum_{t\ge 1}\sum_{l\ge 1}\binom{t}{2} \nu\big(\hit_{k }(t )\big) p_{\ell}
& = -\sum_{t\ge 0} \binom{t+2}{2} \nu\big(\hit_{k }(t+2 )\big)
= -[z^k]\frac{f(z)^2}{\big(1-f(z)\big)^3}
\end{align*}
with $f$ as above. Assembling all this,
\begin{align*}
\mu(\hit_k)&=\mu(A)+\mu(B)+\mu(C)
= \nu (\hit_{k } ) +
\binom{k}{2}^{-1}[z^k]\frac{f(z^2)-f(z)^2}{\big(1-f(z)\big)^3}\,.
\end{align*}
Substituting this into~\eqref{mu-Hk} we find
$$
 \pi(\hit_k) = \nu(\hit_k) - \frac{\nu(D\ge 1)}{  1-\nu(D\ge 1)}\Bigg(\binom{k}{2}^{-1}[z^k]\frac{f(z^2)-f(z)^2}
{\big(1-f(z)\big)^3}  +O(k^2/m) \Bigg)\,.
$$
Recalling~\eqref{D1andD2}, we get
$$
 \frac{\nu(D\ge 1)}{  1-\nu(D\ge 1)}=\binom{k}{2}m^{-1}+O(k^4/m^2)
$$
and hence
$$
 \pi(\hit_k)  = \nu(\hit_k) -\big(m^{-1} +O(k^2/m^2)\big)[z^k]\frac{f(z^2)-f(z)^2}{\big(1-f(z)\big)^3}  +O(k^4/m^2) \,.
$$
The constants implicit in the $O(\cdot)$ terms arise in~\eqref{D1andD2} and consequently are functions of $k$ and $m$ alone.  Since $p_0=0$, we have  $P_k=\pi(\hit_k)$ and $R_k=\nu(\hit_k)$, and the theorem follows.
\end{proof}

\begin{proof}[\textbf{\em Proof of Theorem~\ref{thm-renewals-estimates}}]
This follows from Theorem~\ref{thm-renewals-connection}  combined with Lemmas~\ref{l:contour} and~\ref{l:fsquared}, the only condition not explicitly assumed   being condition (i) of Lemma~\ref{l:contour}. This follows from the fact that $\sum p_\ell z^\ell$ converges absolutely for $|z| \le r$ as we assumed in (i) that $p_\ell \leq (1/c) r^{-\ell}$.
\end{proof}

\begin{proof}[\textbf{\em Proof of Theorem~\ref{thm-renewals-size-biased}}]
Conditional upon $\hat{Y}_J=x_J=\ell$ say, the event that $k \in \{\hat{Y}_1,\ldots,\hat{Y}_m\}$ has probability $\pi(\hit_{k-\ell})$ where $\mu$ is defined using the random permutation $\sigma$ of $[m]\setminus \{J\}$. Recall also that $\sum_\ell \ell p_\ell=n/m$. Hence
\begin{equation}\label{HitProb}
\P\left(k \in \{\hat{Y}_1,\ldots,\hat{Y}_m\}\right) = \sum_{\ell\ge 1} \frac{\ell p_\ell}{n/m} \pi(\hit_{k-\ell})\,.
\end{equation}
We can ignore all terms in this  summation with $\ell\ge\log_r (m\log m)$, as by  condition  (i)  they are dominated by the $o(1/m)$ error term in the theorem's claim.
For $\ell<\log_r (m\log m)$, we will apply Theorems~\ref{thm-renewals-connection} and~\ref{thm-renewals-estimates} to the multiset of positive integers $\{x_1,\ldots,x_m\}\setminus \{x_J\}$, noting that  their sum is $n-\ell=n\big(1+O(\log m)/m\big)$, $f$ is replaced by $\hat{f}:=f-z^\ell/m$, and the $p_\ell$ change accordingly, to values we call $\hat{p}_\ell$. Moreover, we use $\hat{r}=r-\epsilon$ in place of $r$, where we choose $\epsilon>0$ such that $\sqrt{r-\epsilon}>1+\delta$, i.e.\ $\epsilon<r-(1+\delta)^2$. Truth of the theorem for such restricted $\epsilon$ implies that  it holds  for all larger $\epsilon$.

We first verify conditions (i--iii) of Theorem~\ref{thm-renewals-estimates} for $\hat f$ etc.\ We have for all $j\ne \ell$ that
\begin{equation}\label{psize}
\hat{p}_j= p_jn/(n-\ell)=p_j\big(1+O(\log m)/m\big)<p_j r/(r-\epsilon)\,,
\end{equation}
and $\hat{p}_\ell \le p_\ell$, which imply that $\sum_{\ell\ge 1} \hat{p}_\ell \hat{r}^\ell \le c $ ($n$ sufficiently large), as required for (i). Note this implies that $\hat f(z)\le c$ for $|z|\le r-\epsilon$, a fact that we will use before long.  We turn next to (iii). For $|z|\le r-\epsilon$, we have $z^\ell/m\le(r-\epsilon)^{(\log m + \log\log m)/\log r}/m =o(1)$, and hence, using the second-last estimate in~\eqref{psize},
$\hat{f}(z)= f(z)+o(1)$.
So, by assumption (ii) of the present theorem and noting that $r-\epsilon>1+\delta$, when $|z|=r-\epsilon$, we have $|\hat{f}(z)-1|=|f(z)-1|+o(1)>\hat{c}_0^{-1} $ where $\hat{c}_0 = 2c_0$ say. This gives what is required for Theorem~\ref{thm-renewals-estimates}(iii)  for $\hat f$ and $\hat r$.
Regarding Theorem~\ref{thm-renewals-estimates}(ii), we first note that $\hat f'(1)=f'(1)+o(1)$ on $|z|\le r-\epsilon$  for much the same reason as $ \hat f=f+o(1)$. Hence (ii) implies for $n$ sufficiently large that $\hat f(z)\ne 1$ when $|z|\le r-\epsilon$ and $|z-1|\ge \delta$. On the other hand, (iii) implies that the mean value   of $f'(z)$   on any path in $|z-1|< \delta$ has absolute value at least $f'(1)/2\ge 1/2$, so the mean value of $\hat f'(z)$ on such a path cannot be zero. Thus, in this disc, the unique solution of $\hat f(z)=1$ is at $z=1$. We now have   Theorem~\ref{thm-renewals-estimates}(ii)  for $\hat f$ and $\hat r$.

Applying Theorems~\ref{thm-renewals-connection} and~\ref{thm-renewals-estimates} in this context, we obtain
\[
\pi(\hit_{k-\ell})=\frac{m-1}{n-\ell} -\big(m^{-1} +O(k^2/m^2)\big) \frac{\hat f'(1)- \hat f'(1)^2+ \hat f''(1)}{\hat f'(1)^3}
+\hat q_k(n)
\]
with
\[
|\hat q_k(n)|\le \hat{c}_0 (r-\epsilon)^{-k}  + O(m^{-1} c_1 (r-\epsilon)^{-k/2})   +O(k^4/m^2)\,.
\]

Again by assumption (ii) of the present theorem, $c_1\le \hat{c}_0^3 \max_{|z|=\sqrt {r-\epsilon}}  | \hat f(z^2)-\hat f(z)^2|$. Recalling that $|\hat f(z)|\le c$ and noting that $c\ge 1$ so $c^2\ge c$, we get  $c_1 \le  c^2\hat{c}_0^3 \le 8c^2{c}_0^3 $, a constant. Since $r-\epsilon>1$ and  $k\to\infty$, and $\hat c_0=2c_0$, we obtain
\begin{equation}\label{qhat}
|\hat q_k(n)| \le \hat c_0 (r-\epsilon)^{-k} + O\left(k^4 / m^2\right) +o(1/m)
\end{equation}
where $o(\cdot)$ depends on the rate that $k\to\infty$.
  Note that $k^2/m^2=o(1/m)$, $ \hat f'(1)= f'(1)+o(1)$, $\hat f''(1) = f''(1)+o(1)$, and $(m-1)/(n-\ell)= (m-1)/n +\ell m/n^2 +o(1/n)$. (Also, $n\ge m$ since each $x_j$ is a positive integer.)  So, recalling that $\sum_{\ell\ge 1}  \ell p_\ell = n/m$,~\eqref{HitProb} gives
\[
\P\left(k \in \{\hat{Y}_1,\ldots,\hat{Y}_m\}\right) =
\frac{m-1}{n}
 -  \frac{f'(1)- f'(1)^2+ f''(1)}{m  f'(1)^3}
+ q_k(n) +  \sum_{\ell\ge 1} \frac{\ell^2 p_\ell m^2}{n^3}\,,
\]
where $ |q_k(n) |$ has an upper bound of the same form as given in~\eqref{qhat}.
Since $\sum_{\ell\ge 1}  \ell^2 p_\ell =f'(1)+f''(1)$ and $f'(1) =n/m$,   cancellation now gives the theorem.
\end{proof}

\begin{proof}[\textbf{\em Proof of Theorem~\ref{thm-renewals}}]
Theorem~\ref{thm-renewals-estimates}(i) holds for $n$ sufficiently large  since $|f(r)- g(r)|<w(n)=o(1)$ and   $g(r)$ is finite by the absolute convergence assumption on $g$. The convergence of $f$ and $g$ in $|z|\le r$ also imply that they are analytic, and so all their derivatives exist, in $|z|< r$. We have $|f'(1)-g'(1)|<w(n)=o(1)$ and hence $g'(1)=n/m+o(1)$. Thus for some $\delta >0$ with $1+\delta +\epsilon'<r$ (we can assume $\epsilon'$ is arbitrarily small, so this is always possible) we have  $|g'(z) -g'(1)|<g'(1)/3 $ if  $|z-1|< \delta$, $z\in \C$.  Arguing as above, this implies  Theorem~\ref{thm-renewals-size-biased}(iii) for $n$ sufficiently large.   Since  $g(z) \ne 1$ for $|z|\le r$ whenever $z\ne 1$, and by the continuity of $g$,  there exists $c_0>0$ such that $|g(z)-1| \ge 2c_0^{-1}$ if $|z|\le r$ and $|z-1|\ge \delta$, $z\in \C$, similarly yielding   both Theorem~\ref{thm-renewals-estimates}(iii)  and   Theorem~\ref{thm-renewals-size-biased}(ii).
 Theorem~\ref{thm-renewals-estimates}(ii) follows from these, as was shown in the proof of  Theorem~\ref{thm-renewals-size-biased}. So the hypotheses of both of these theorems hold. In the conclusions, we note for (a) (the non-size-biased case) that the difference between $\frac{g'(1)- g'(1)^2+ g''(1)}{g'(1)^3 }$ and the corresponding function of $f$ is $o(1)$ by the hypotheses of the corollary and the fact that $g'(1)\ne 0$, which is absorbed in the error term $o(m^{-1})$ in $q_1$.  So Theorem~\ref{thm-renewals-connection} completes the proof of (a). On the other hand, (b) (the size-biased case) follows immediately from Theorem~\ref{thm-renewals-size-biased}.
\end{proof}

\section{Second moment analysis of cycle factors: framework}\label{sec-second-moment-framework}
In this section we set the framework for a delicate asymptotic analysis of the second moment of the number of
$k$-cycle factors in a random 3-regular multigraph generated by the \emph{configuration model} $\cP_{n,3}$. In this model,
introduced first by Bollob\'{a}s \cite{Bollobas1} (a different flavor of it was implicitly used by Bender and Canfield~\cite{BC}), one associates each of the $n$ vertices with a triplet of distinct points (also referred to as ``half-edges'') and consider a uniform perfect matching (a pairing) on these points. The random $3$-regular multigraph is obtained by contracting each of the triplets into a single vertex (possibly introducing multiple edges and self-loops). Clearly, on the event that this produces a simple graph, it is uniformly distributed among all cubic graphs, and furthermore, this event occurs with probability bounded away from $0$. Hence, every event that occurs w.h.p.\ for this model, also occurs w.h.p.\ for a random $3$-regular graph, and it will suffice to prove our results in this framework. See~\cites{Bollobas2,JLR,Wormald} for additional information.

Let $\cfact$ denote the number of $k$-cycle factors in a configuration of $\cP_{n,3}$.
The bulk of the analysis of $\cfact$ will be carried out in~\S\ref{sec-intersection}, where our goal is to establish that
$ \E[\cfact^2]\leq (3+o(1))\E[\cfact]^2$
provided $k\geq K_0(n)$, with $K_0(n)$ as defined in Eq.~\eqref{eq-K0}. We first estimate the first moment, $\E[\cfact]$, which is trivial by comparison.

Rather than working directly with $\cfact$, 
it will be more convenient to consider a re-scaled version of this variable. A $k$-cycle factor is \emph{ordered} if its cycles are linearly ordered, \emph{rooted} if each cycle is rooted at one of its vertices (i.e., a vertex of each cycle is distinguished as its \emph{root}), and \emph{directed} if each cycle is given one of the two possible orientations. Let
\[ Y_k = \#\{\mbox{\emph{rooted, ordered} and \emph{directed} $k$-cycle factors in $G \sim \cP_{n,3}$}\}\,,\]
and call members of this set {\footnotesize \it{ROD}} \emph{factors}
 for brevity.
 Observe that 
 each $k$-cycle factor corresponds to precisely $(n/k)!(2k)^{n/k}$
distinct $\rod$ factors,
so that
\begin{equation}
  \label{eq-rod-rescale}
  \frac{Y_k}{\cfact} = \left(\frac{n}k\right)!\,(2k)^{n/k} = (1+o(1))\sqrt{\frac{2\pi n}k}\left(\frac{2n}e\right)^{n/k}\,.
\end{equation}
In the next subsections we establish the first and second moment of $Y_k$, followed by an analysis of $Y_k$ given prescribed sequences of small cycle counts (to be used in the framework of the small subgraph conditioning method).

\subsection{Expected number of cycle factors}
It is straightforward to estimate the expectation of $Y_k$. A given rooted, ordered and directed cycle factor is in one-to-one correspondence with a permutation on the $n$ vertices (we will often use this form), and in order to give rise to it in $\cP_{n,3}$ we choose the incoming and outgoing half-edges of each vertex in its cycle (a total of $6^n$ options) and then add a perfect matching on the remaining half-edges, for which there are
\[\match(n)\deq(n-1)!! =\frac{n!}{(\frac{n}2)!\,2^{n/2}}\]
options. Recalling that $|\cP_{n,3}| = \match(3n)$ we can conclude that
\begin{equation}
   \label{eq-E[Y]}
   \E Y_k = (1+o(1))\frac{\sqrt{2\pi n} \left(\frac{n}{e}\right)^n \cdot 6^n \cdot \sqrt{2} \left(\frac{n}{e}\right)^{n/2}}{\sqrt{2}\left(\frac{3n}{e}\right)^{3n/2}} = (1+o(1))\sqrt{2\pi n} \left(\tfrac43\right)^{n/2}\,.
 \end{equation}
It thus follows from~\eqref{eq-rod-rescale} that
\begin{equation}
  \label{eq-E[F]}
  \E [\cfact] = (1+o(1))\sqrt{k}\bigg(\frac{e(4/3)^{k/2}}{2n}\bigg)^{n/k}\,.
\end{equation}
Observe that whenever $e(4/3)^{k/2}/(2n) \geq 1$, which occurs when $k$ is at least
\[ 2\log_{4/3}(2n/e) = [1-\tfrac12\log_2 3]^{-1}\log_2 (2n/e) \,,\]
we have $\E [\cfact] \geq (1+o(1))\sqrt{k} \to \infty$. On the other hand, if for instance $e(4/3)^{k/2}/(2n) < 1-\frac{k\log k}n$ we
get that $\E [\cfact] \to 0$, which holds for any $k \leq K_0(n) - 2\log_{4/3}[1/(1-\frac{k\log k}n)]$.
As the last additive term is of order $\frac{k\log k}n = o(\frac{\log^2n}n)$, we conclude that $\E[\cfact]=o(1)$ whenever $k \leq K_0(n)-\frac{\log^2 n}n$.

\subsection{Second moment via intersection patterns}
To estimate $\E Y_k^2$ we will count the triples in the set
\begin{equation}
  \label{eq-GRR}
  \left\{ (G, R_1, R_2) ~~:~~ G \in \cP_{n,3}\mbox{ and }R_1,R_2\mbox{ are $\rod$ factors of $G$}\right\}\,,
\end{equation}
where we say that $R$ is a $\rod$ factor of a pairing $G$ if each of its underlying cycles belongs to $G$.
Define the following quantity to capture the potential underlying intersection graph of two factors:
\begin{equation}
  \label{def-intersection-pattern}
  \inter_{h,m} = \left\{ \begin{array}{c}\mbox{undirected graphs on $[n]$ with $h+m$ components:} \\ \mbox{$h$ cycles of length $k$ and $m$ nontrivial paths}\end{array}\right\}
\end{equation}
(where here and in what follows, a path is nontrivial if it has at least 2 vertices). We refer to the graphs --- usually denoted $S$ --- in any $\inter_{h,m}$ as {\em intersection patterns}.

For two $\rod$ factors $R_1,R_2$ and $S \in\inter_{h,m}$ we say $R_1\cap R_2$ \emph{projects} to $S$, denoted by $R_1 \cap R_2 \hookrightarrow S $, if the underlying undirected graph of $R_1 \cap R_2$ is equal to $S$ (that is, the undirected edges in common in $R_1,R_2$ give $S$).

Crucially, $R_1\cap R_2$ necessarily projects to some $S \in \inter_{h,m}$ for some $h,m$. Indeed, let $R_1,R_2$ be two $\rod$ factors. Upon removing the ordering, rooting and directing of the cycles we are left with some number $h$ of $k$-cycles that belong two both factors. On the remaining vertices we have two $k$-cycle factors $H_1$ and $H_2$ of a $3$-regular multigraph on $n-kh$ vertices, whose intersection is a set of simple nontrivial paths with the following property: if we traverse a common path along a $k$-cycle of $H_1$, then as soon as this path ends we see an edge exclusive to $H_1$ followed immediately by a new nontrivial common path (and similarly for $H_2$). This is because $H_2$ forms a spanning 2-regular multigraph in our 3-regular multigraph, and therefore there cannot be two edges exclusive to $H_1$ incident to the same vertex.

Say that a $\rod$ factor {\em contains} $S\in\inter_{h,m}$ if its underlying undirected graph contains $S$, and set
\begin{equation}
  \label{eq-def-N(S)}
  \sN(S) = \#\left\{ \rod\mbox{ factors containing $S$}\right\}
\end{equation}
(for any $h,m$ and $S\in\inter_{h,m}$).
Apart from the orienting of the paths, and the rooting and directing of the cycles, $\sN(S)$ corresponds to the number of different ways one can weave the $m$ paths in $S\in\inter_{h,m}$ into a collection of $n/k - h$ disjoint $k$-cycles by repeatedly connecting the ends of two paths by an edge.

We now claim that the quantity defined in~\eqref{eq-GRR} satisfies
\begin{equation}
  \label{eq-GRR2}
  \#(G, R_1, R_2) = \sum_h \sum_m \sum_{S\in \inter_{h,m}} \sN(S)^2 6^n \match(n-2m)\,.
\end{equation}
Indeed, the factor $\sN(S)^2$ accounts for choosing $R_1,R_2$ given the intersection pattern $S \in \inter_{h,m}$.
Once we compose the factors $R_1$ and $R_2$, the $2m$ endpoints of the $m$ paths in $S$ reach degree 3 (each having two exclusive edges in $R_1,R_2$ and
one common edge in $S$) and all other vertices are of degree $2$. Thus, it remains to complete the pairing by matching the latter $n-2m$
vertices, and finally to order the three half-edges of each vertex (the factor of $6^n$).

Recalling that $|\cP_{n,3}|=\match(3n)$ it then follows from~\eqref{eq-GRR2} that
\begin{equation}
  \label{eq-E[Y^2]}
\E Y_k^2 = \frac{6^n}{\match(3n)} \sum_h \sum_m \match(n-2m) \sum_{S \in \inter_{h,m}} \sN(S)^2\,.
\end{equation}
The lion's share of the work involved in estimating this quantity will be devoted to intersection patterns with $h=0$ (i.e., cycle-free) and $m$ close to $n/3$; these will turn out to account for most of the right-hand side of~\eqref{eq-E[Y^2]}.
The contribution of the remaining pairs $h,m$ will then be shown, partly by reducing to the earlier analysis, to be negligible in comparison with these main terms.

The next theorem is the heart of the matter. Its proof, which relies, {\em inter alia}, on the renewal estimates of~\S\ref{sec-renewals}, is given in~\S\ref{sec-intersection}.

\begin{theorem}
  \label{thm-N(S)}
 Let $m$ and $n$ be such that $|\frac{m}n -\frac{1}3| \leq (\log n)^{-1/3}$, and define
$\sN(S)$ as in~\eqref{eq-def-N(S)}. If $k > (2+\epsilon)\log_2 n$ for some fixed $\epsilon>0$ then
\[ \sum_{S\in\inter_{0,m}} \sN(S)^2  \leq (9+o(1))\left(m!\, 2^m\right)^2\left|\inter_{0,m}\right|\,.\]
\end{theorem}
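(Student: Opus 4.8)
We outline the strategy. The plan is to factor $\sN(S)$ as an $S$-independent prefactor times a renewal-without-replacement hitting probability, to reduce $\sum_{S\in\inter_{0,m}}\sN(S)^2$ to an average of that probability over uniformly random compositions of $n$ into $m$ parts, to observe that the typical composition has the geometric profile of the Example, and then to evaluate the probability on such compositions using the singularity analysis and renewal estimates of \S\ref{sec-renewals}.

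\textit{The renewal formula and the reduction.} Fix $S\in\inter_{0,m}$ with path lengths $x_1,\dots,x_m$ (each $\ge2$, summing to $n$; if $\max_jx_j>k$ then $\sN(S)=0$ since a path of $S$ lies inside a single cycle of any factor containing it). A $\rod$ factor containing $S$ is obtained by orienting the $m$ paths ($2^m$ ways), splitting them into an ordered list of $n/k$ blocks of total length $k$, and cyclically arranging, directing and rooting each block into a $k$-cycle. Carrying this out block by block, recording after $j$ blocks the residual multiset of $m_j$ paths summing to $n_j=n-(j-1)k$, exhibits $\sN(S)=\Pi(n,k)\,\Lambda_k(S)$, where $\Pi(n,k)$ is an explicit prefactor (built from $(2k)^{n/k}$, $(n/k)!$ and the telescoping product $\prod_{j=1}^{n/k}(n-(j-1)k)=k^{n/k}(n/k)!$), and $\Lambda_k(S)$ is the probability that the renewal process \emph{without replacement} driven by $\{x_j\}$ hits every multiple $k,2k,\dots,n$ of $k$ --- with the \emph{size-biased} variant forced on us by the rooting of each cycle (the root is a uniform vertex, not a uniform path, of its cycle), which by the remark after Theorem~\ref{thm-renewals} is exactly what removes the $1/m$-term that would otherwise be fatal at $k=O(\log n)$. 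Since $\Lambda_k(S)$ depends on $S$ only through $\{x_j\}$, and a given length-multiset with multiplicities $c_\ell$ is realized by exactly $n!/(2^m\prod_\ell c_\ell!)$ patterns $S\in\inter_{0,m}$, summing over multisets and passing to ordered compositions gives
\[
\sum_{S\in\inter_{0,m}}\sN(S)^2=|\inter_{0,m}|\;\Pi(n,k)^2\;\E\!\big[\Lambda_k^2\big],\qquad |\inter_{0,m}|=\frac{n!}{2^m\,m!}\binom{n-m-1}{m-1},
\]
the expectation being over a uniform composition of $n$ into $m$ parts $\ge2$. It thus suffices to prove $\Pi(n,k)^2\,\E[\Lambda_k^2]\le(9+o(1))(m!\,2^m)^2$.

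\textit{Typical and atypical compositions.} With $|\tfrac mn-\tfrac13|\le(\log n)^{-1/3}$ we have $n-2m=(1+o(1))m$; writing $x_j=2+y_j$, a uniform composition corresponds to a uniform composition of $n-2m$ into $m$ nonnegative parts, so a Chernoff/local-limit argument shows that, off an event of probability $e^{-\Omega(n/\log^2n)}$, the empirical law $p_\ell=c_\ell/m$ is within $o(1)$ of the geometric law $p_\ell\sim2^{1-\ell}$ of the Example --- equivalently $f(z)=g(z)+o(1)$, $f'(z)=g'(z)+o(1)$, $f''(z)=g''(z)+o(1)$ uniformly on a fixed disc $|z|\le R$ with $1<R<2$ and $g(z)=z^2/(2-z)$ (recall $g(z)=1$ only at $z=1$ in $|z|<2$). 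Call such compositions \emph{good}. For bad ones we use only $\Lambda_k\le1$ together with the deterministic bound $m_j\le n_j/2$ (all parts $\ge2$), so that $\Lambda_k\le(\tfrac12+o(1))^{n/k}$; since bad compositions form an $e^{-\Omega(n)}$-fraction while the good value $\Lambda^\star$ (below) satisfies $(\Lambda^\star)^2=e^{-\Theta(n/\log n)}$ and $n\gg n/\log n$, the bad compositions contribute $o\!\big((m!\,2^m)^2|\inter_{0,m}|\big)$ to $\sum_S\sN(S)^2$.

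\textit{Evaluating $\Lambda_k$ on good compositions, and the main obstacle.} It remains to show $\Lambda_k(S)=(1+o(1))\Lambda^\star$ for every good $S$, with $\Pi(n,k)\,\Lambda^\star=(3+o(1))\,m!\,2^m$, whence $\Pi^2(\Lambda^\star)^2=(9+o(1))(m!\,2^m)^2$ and, together with the previous paragraph, the theorem follows. Here $\Lambda^\star$ is the ``expected'' value $\prod_{j=1}^{n/k}(m_j/n_j)$ with $m_j=m-(j-1)\tfrac k3$, $n_j=n-(j-1)k$: since $m/n=\tfrac13$ each ratio is $\tfrac13$ to leading order, but there is a bounded correction because hitting $n$ itself is automatic (so there are only $n/k-1$ genuine hits); after the bookkeeping with $\Pi(n,k)$, Stirling and $m=(1+o(1))n/3$, it is precisely this automatic hit that produces the constant $9=3^2$ rather than $1$. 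The proof of $\Lambda_k(S)=(1+o(1))\Lambda^\star$ is the delicate part, and it is where the sharp estimates of \S\ref{sec-renewals} are used. A naive block-by-block application of Theorem~\ref{thm-renewals}(b) does \emph{not} suffice: in the hardest regime $k\asymp\log n$ the per-block error $k^4/m_j^2$ sums to $\Theta(k^2)=\Theta(\log^2n)$ over the blocks (the late blocks, where $m_j$ is small and the hypothesis $k=o(\sqrt{m_j})$ fails, being the culprits), yielding an uncontrollable $n^{\Theta(\log n)}$ multiplicative error. Instead one should express $\Lambda_k(S)$ as a single coefficient extraction of a generating function in the style of Theorem~\ref{thm-renewals-connection} --- packaging all $n/k$ hits at once rather than one at a time --- and estimate its coefficients via the contour-integration/singularity analysis of Lemmas~\ref{l:contour} and~\ref{l:fsquared} together with Theorem~\ref{thm-renewals}; the size-biased form and the sharp exponential tail are exactly what make the total error $o(1)$ for $k\ge(2+\epsilon)\log_2n$, as flagged after the Example. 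Carrying this through --- checking along the way that conditioning on the earlier hits leaves the residual ordering uniform on the residual multiset and the residual profile good --- gives $\Lambda_k(S)=(1+o(1))\Lambda^\star$ and finishes the proof. The crux, and the main obstacle, is precisely this: controlling the enormous number ($\asymp n/\log n$) of accumulated hits --- above all the ``endpoint'' hits near $0$ and $n$, where the clean renewal estimate is unavailable --- with a total error that still tends to $0$.
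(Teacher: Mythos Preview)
Your outline captures the right heuristics (size-biased renewal, typical geometric profile, the origin of the constant $3$), but there are two genuine gaps.

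\textbf{The factorization $\sN(S)=\Pi(n,k)\Lambda_k(S)$ does not hold.} Write out the count carefully: for a fixed sequence $(j_1,\ldots,j_{n/k})$ of paths-per-cycle, the number of (root, path-ordering) sequences is $\prod_i(n-(i-1)k)\cdot\prod_i(m_{i-1}-1)_{j_i-1}$, which simplifies to $k^{n/k}(n/k)!\cdot(m-1)!/\prod_{i=1}^{n/k-1}m_i$. The denominator depends on the $j_i$'s, so there is no $S$-independent prefactor. Equivalently, under the size-biased process the probability of a specific valid sequence is not constant across sequences, so ``count $=$ prefactor $\times$ probability'' fails. The paper circumvents this by an \emph{inductive} bound (Theorem~\ref{thm:goodSuffixes}): at each stage one has exactly $n'(m'-1)!\,\P(\hit_k)\cdot 2^{m'}$, and the point is that $n'\P(\hit_k)=m'+O(\psi(r))$, so the extra factor $n'/m'$ is absorbed into the next $(m')!$ with a controllable error $\psi(r)$. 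The factor $3$ appears only in the base case $r=1$, where $k\sim 3m'$; it is not a correction to a putative product formula.

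\textbf{The ``single coefficient extraction'' for all $n/k$ hits is not available.} For the with-replacement process the hits factor as $R_k^{\,n/k-1}$ by the renewal property, but without replacement the conditional law after each hit depends on which paths were used, so there is no convolution structure to package into one coefficient of a fixed generating function. The genuine difficulty you flag --- late cycles where $m_j$ is small, the hypothesis $k=o(\sqrt{m_j})$ fails, and the residual profile may drift away from geometric --- is precisely what the paper spends \S\ref{subsec:bad-suffix} on. Their solution is structural rather than analytic: define $\delta$-normal distributions, show (Claims~\ref{clm:normal-abnormal} and~\ref{clm:normal-large-subset}) that normality is robust under taking large subsets, bound $\sN_\delta(S)$ (all suffixes normal) by induction with a two-regime argument for small vs.\ large $r$, and then separately bound the contribution of $\rod$ factors whose $r$-suffix first becomes abnormal at some $r$ (Lemma~\ref{l:rClose}), trading the rare abnormality event against a crude count on the suffix. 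There is also a preprocessing step for ``excess'' long paths (Claims~\ref{clm:excess-dist}--\ref{clm:excess-rod}) with no analogue in your sketch. None of this is replaced by a generating-function identity.

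A smaller point: your bad-composition estimate does not close. Even granting a factorization, your bound $\Lambda_k\le(\tfrac12+o(1))^{n/k}$ together with $\Lambda^\star\asymp(\tfrac13)^{n/k}$ gives $\sN(S)^2/(m!2^m)^2\lesssim(9/4)^{n/k}=e^{\Theta(n/k)}$, while the Chernoff bound you invoke for ``bad'' (with $o(1)$ deviations, as needed for Theorem~\ref{thm-renewals}) is $e^{-o(n)}$, not $e^{-\Omega(n)}$. The paper instead uses the refined deterministic bound of Lemma~\ref{l:better} ($\sN(S)\le(\log k)^{3n/k}m!2^m$ off a negligible set) to beat the crude $k^{n/k}$.
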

What we will actually find is that most of this sum comes from those $S\in\inter_{0,m}$ for which $\sN(S) \leq (3+o(1))m! 2^m$.
We may contrast this with, for example, the crude bound
\begin{equation}\label{eq-N(S)-overestimate}
\sN(S)\leq k^{n/k}m!2^m\,.
\end{equation}
Here we think of each member of $\sN(S)$ as corresponding to a permutation of the $m$ paths,
gotten by listing the cycles in their $\rod$ order and then, within each cycle, beginning with the path
containing the root and listing the remaining paths in the order dictated by the orientation of
the cycle.
The factors $2^m$ and $k^{n/k}$ account for orienting the paths and rooting the cycles.
Of course this is wasteful in two ways:  first, we have overpaid for the roots (which are chosen from the first path in a cycle rather than from the entire cycle),
and second,
the only permutations arising from $S$ are those with the property that
all multiples of $k$ are partial sums of their sequence of path sizes.
In particular, consideration of the
probability of the property just mentioned leads naturally to renewals and the
material of \S\ref{sec-renewals}.


\subsection{Properties of a typical intersection pattern}
To begin, we will argue that
\begin{equation}\label{eq:I0m}
|\inter_{0,m}| = \binom{n-m-1}{m-1}\frac{n!}{m!2^m}
\end{equation}
and that the multiset of path sizes of a uniform $S\in \inter_{0,m}$ follows the same law as the multiset of part sizes of a
uniformly chosen member of $\cC_{0,m}$, the set of $m$-compositions of
the integer $n$ with parts of size at least 2.
To see this, regard $T=(a_1,\ldots, a_m)\in \cC_{0,m}$
as a graph in
$\inter_{0,m}$ consisting of the paths $(1,\ldots, b_1),
(b_1+1,\ldots, b_2),\ldots, (b_{m-1}+1,\ldots, b_m)$,
where $b_i= a_1+\cdots +a_i$ (in particular $b_m =n$).
Each relabeling of the vertices of $T$ by $[n]$ gives an $S\in \inter_{0,m}$ whose
path sizes coincide with the part sizes of $T$, and, conversely, this procedure
(choose $T$, relabel its vertices) gives rise to each $S\in \inter_{0,m}$ precisely
$m!2^m$ times.  This gives the second assertion above, and also \eqref{eq:I0m}
once we recall that $|\cC_{0,m}|=\binom{n-m-1}{m-1}$.

The next lemma, which will be used several times in~\S\ref{sec-intersection}, addresses large deviations for the path sizes of a typical intersection pattern $S$, as well as a random subset of $S$.
\begin{lemma}\label{lem:binomial}
Let $|\frac{m}n-\frac13|=o(1)$ and
let $S$ be chosen uniformly from $\inter_{0,m}$. Then
for any $t$ and $\ell$ such that $t \ell = o(n)$, the probability that $S$ contains at least $t$ paths of length at least $\ell$ is at most
\begin{equation}\label{eq:PathsBound}
\left(\frac{4e m}{t(2-o(1))^\ell}\right)^{ t }\,.
\end{equation}
Moreover, if $|\frac{m}n-\frac13| < \epsilon$
 for some $0<\epsilon=\epsilon(n)=o(1)$
and $p_\ell$ is the fraction of $\ell$-vertex paths among the $m$ paths in $S$ for some $\ell=\ell(n)\geq 2$, then
\begin{equation}
  \label{eq-m-dev}
   \P\left( \frac{2-\epsilon}{(2+\epsilon)^\ell} \leq p_\ell \leq  \frac{2+\epsilon}{(2-\epsilon)^\ell}\right) \geq
     1-O\left(\sqrt{m}\ell e^{-\frac18 \epsilon^2 m /(\ell 3^{\ell})}\right)\,.
\end{equation}
\end{lemma}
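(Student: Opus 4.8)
The plan is to reduce both statements to the composition model. By the correspondence established just above, the multiset of path lengths of a uniform $S\in\inter_{0,m}$ is distributed as the multiset of parts of a uniformly chosen $m$-composition of $n$ with all parts $\ge2$, i.e.\ a uniform element of $\cC_{0,m}$, and $|\cC_{0,m}|=\binom{n-m-1}{m-1}$; I would work with the latter throughout.

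For \eqref{eq:PathsBound} the approach is a direct first-moment estimate. Fix a $t$-set $T$ of coordinates; a composition in $\cC_{0,m}$ whose parts indexed by $T$ are all $\ge\ell$ becomes, after subtracting $\ell$ from each part in $T$ and $2$ from every other part, a weak composition of $n-2m-t(\ell-2)$ into $m$ nonnegative parts, so there are exactly $\binom{n-m-1-t(\ell-2)}{m-1}$ of them (and the claimed bound is vacuous when this is $0$, which $t\ell=o(n)$ rules out). Summing over the $\binom{m}{t}$ choices of $T$ and writing $\binom{n-m-1-t(\ell-2)}{m-1}\big/\binom{n-m-1}{m-1}=\prod_{j=0}^{t(\ell-2)-1}\big(1-\tfrac{m-1}{\,n-m-1-j}\big)$, each factor is at most $1-\tfrac{m-1}{n-m-1}=\tfrac12+o(1)$ since $m/n\to\tfrac13$; hence
\[
\P\big(S\text{ contains }\ge t\text{ paths of length }\ge\ell\big)\ \le\ \binom{m}{t}\Big(\tfrac12+o(1)\Big)^{t(\ell-2)}\ \le\ \Big(\tfrac{em}{t}\Big)^{t}(2-o(1))^{-t(\ell-2)},
\]
and pulling out the factor $(2-o(1))^{2}\le4$ gives $\big(\tfrac{4em}{t(2-o(1))^{\ell}}\big)^{t}$, as required.

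For the sharper estimate \eqref{eq-m-dev} I would invoke the standard coupling of a uniform composition with i.i.d.\ geometric summands. Put $q:=\tfrac{n-2m}{n-m}$, chosen so that $G\sim\mathrm{Geom}(q)$ (i.e.\ $\P(G=j)=(1-q)q^{j}$, $j\ge0$) has mean $\tfrac{q}{1-q}=\tfrac{n-2m}{m}$; then a uniform element of $\cC_{0,m}$ has the law of $(2+G_1,\ldots,2+G_m)$, the $G_i$ being i.i.d.\ $\mathrm{Geom}(q)$, conditioned on $\sum_iG_i=n-2m$. The finer hypothesis $|m/n-\tfrac13|<\epsilon$ (as opposed to just $o(1)$) gives $q=\tfrac12+O(\epsilon)$, so under the unconditioned product law $N_\ell:=\#\{i:2+G_i=\ell\}\sim\Bin\big(m,(1-q)q^{\ell-2}\big)$, and its mean $m(1-q)q^{\ell-2}$ lies inside $m\cdot\big[\tfrac{2-\epsilon}{(2+\epsilon)^{\ell}},\tfrac{2+\epsilon}{(2-\epsilon)^{\ell}}\big]$ with relative margin of order $\epsilon$ to each endpoint, provided $m/n$ is suitably close to $\tfrac13$. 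A multiplicative Chernoff bound for $N_\ell$ then bounds the product-law probability that $p_\ell=N_\ell/m$ leaves this interval by $2\exp\!\big(-\Omega(\epsilon^{2}m\,2^{-\ell})\big)$. Transferring to the conditioned law only costs a factor $1/\P_{\mathrm{prod}}(\sum_iG_i=n-2m)=O(\sqrt m)$, since this is a negative-binomial probability evaluated at its mode while $\var G_i=\tfrac{q}{(1-q)^{2}}=\Theta(1)$ and $n-2m\asymp m$. This produces a bound of the form $O(\sqrt m)\exp\!\big(-\Omega(\epsilon^{2}m\,2^{-\ell})\big)$, which is stronger than — hence implies — the stated $O\big(\sqrt m\,\ell\,e^{-\epsilon^{2}m/(8\ell3^{\ell})}\big)$; the weaker exponent in the conclusion simply absorbs the crude estimates of $(1-q)q^{\ell-2}$ and of how far $p_\ell$ must travel to exit the target interval.

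The step I expect to be the main obstacle is the placement argument just sketched: verifying that the product-law mean $m(1-q)q^{\ell-2}$ really sits well inside $m\cdot[\tfrac{2-\epsilon}{(2+\epsilon)^{\ell}},\tfrac{2+\epsilon}{(2-\epsilon)^{\ell}}]$ forces one to track carefully the interplay between $\ell$, $\epsilon$ and $|m/n-\tfrac13|$ (one needs $|m/n-\tfrac13|$ small relative to $\epsilon/\ell$, which is what the generous constants in the stated exponent are accommodating), and one must also check that the Stirling/local-limit estimate for $\P_{\mathrm{prod}}(\sum_iG_i=n-2m)$ holds uniformly over the relevant range of $m$. The first bound, \eqref{eq:PathsBound}, is by comparison a short explicit computation.
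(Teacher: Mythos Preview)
Your argument for \eqref{eq:PathsBound} is correct and is essentially the paper's argument in a slightly cleaner dress: the paper encodes compositions as binary strings and counts configurations of runs, while you count compositions with a prescribed $t$-set of large parts directly. Both reduce to the same ratio $\binom{n-m-1-t(\ell-2)}{m-1}\big/\binom{n-m-1}{m-1}$ times $\binom{m}{t}$, and your product bound on that ratio is fine.

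For \eqref{eq-m-dev} your route is genuinely different from the paper's --- you use the i.i.d.\ shifted-geometric model, so that $N_\ell$ is exactly binomial, whereas the paper uses the i.i.d.\ Bernoulli string and splits $\tilde Z_\ell$ into residue classes $\bmod~\ell$ to manufacture independence (this is where the extra factor $\ell$ in the prefactor comes from). Your encoding is neater and, if it went through, would indeed give a stronger exponent.

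The gap is exactly where you flag it, but your proposed resolution does not work. With $q=\frac{n-2m}{n-m}$ and $\delta:=m/n-\tfrac13$, one has $q=\tfrac12-\tfrac{9}{4}\delta+O(\delta^2)$, so the product-law mean satisfies
\[
(1-q)q^{\ell-2}=2^{1-\ell}\exp\!\big((13.5-4.5\ell)\delta+O(\ell\delta^2)\big),
\]
while the lower endpoint is $2^{1-\ell}\exp\!\big(-(\ell+1)\epsilon/2+O(\ell\epsilon^2)\big)$. Since the hypothesis only gives $|\delta|<\epsilon$, taking $\delta$ near $\epsilon$ and $\ell\ge 4$ puts the mean \emph{below} the lower endpoint (the coefficient $4.5$ beats $1/2$). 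Once the mean is outside the target interval, Chernoff gives you nothing: $\P(p_\ell<\text{lower bound})$ is then at least (roughly) $1/2$, not exponentially small. The ``generous constants in the stated exponent'' sit inside the probability bound, not inside the interval, so they cannot rescue an event whose probability is bounded away from~$1$. Your parenthetical ``one needs $|m/n-\tfrac13|$ small relative to $\epsilon/\ell$'' is the correct diagnosis, but the lemma does not grant you that; you are stuck with $|m/n-\tfrac13|<\epsilon$. To make the geometric route work you would need an additional device --- e.g.\ an exponential tilting/change-of-measure argument that simultaneously controls $N_\ell$ and the conditioning event $\sum G_i=n-2m$ without pinning $q$ to $m$ --- rather than a direct Chernoff bound on $N_\ell$ at the ``wrong'' $q$.
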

\begin{proof}
According to the discussion preceding the lemma, it suffices to prove the statements in question for
the distribution of part sizes of a uniformly chosen member of $\cC_{0,m}$.
Here it will be convenient to work with another (standard) reformulation, identifying each
member of $\cC_{0,m}$ with a binary string $W$ of length $n-m-1$ and Hamming weight $m-1$;
namely such a word with 1's in positions $a_1,\ldots, a_{m-1}$ corresponds to the composition
$(a_1+1, a_2-a_1+1,\ldots, a_{m-1}-a_{m-2}+1,n-m-a_{m-1}+1)$.
In particular, a maximal interval of zeros of length $\ell$ corresponds to a part of size $\ell+2$ in
the composition. 

We will first establish~\eqref{eq:PathsBound}.
Let $Z^*_\ell$ be the number of parts of size at least $\ell$; equivalently, $Z^*_\ell=\sum_{i=0}^{n-m-\ell+1} I^*_i$ where
 \[ I^*_i= \one_{\left\{ W_i=1\;\text{ and }\;
   W_{i+1}=\ldots=W_{i+\ell-2}=0\right\}}\quad(i=0,\ldots,n-m-\ell+1)\,.\]
We can only have $I^*_i=I^*_j=1$ if
$|i-j|\geq \ell -1$ (i.e., if the sets $\{i,\ldots,i+\ell-2\}$ and $\{j,\ldots,j+\ell-2\}$ are disjoint), and it is easy to see that
\begin{compactenum}[(a)]
\item there are
$\binom{n-m-1-t(\ell-2)}{t}$ possible $t$-subsets $T\subset[n-m-\ell+1]$ satisfying $|i-j|\geq\ell-1$ for all $i\neq j\in T$,
and for each one there are $\binom{n-m-1-t(\ell-1)}{m-1-t}$ strings with Hamming weight $m-1$ such that $I^*_i=1$ for all $i\in T$;
\item there are $\binom{n-m-1-t(\ell-2)}{t-1}$ such $t$-subsets $T\subset\{0,\ldots,n-m-\ell+1\}$ that include $0$, and for each one
there are $\binom{n-m-t(\ell-1)}{m-t}$ strings as above.
\end{compactenum}
Thus,
\begin{align*}
\P(Z^*_\ell \geq t) &\leq \frac{\binom{n-m-1-t(\ell-2)}{t}\binom{n-m-1-t(\ell-1)}{m-1-t}
+ \binom{n-m-1-t(\ell-2)}{t-1}\binom{n-m-t(\ell-1)}{m-t} } { \binom{n-m-1}{m-1} }
\\ &= \left[1 + \frac{t}{m-t}\right] \frac{1}{t!} \frac{(m-1)_{t} \, (n-2m)_{t(\ell-2)}
}{ (n-m-1)_{t(\ell-2)}} \,.
\end{align*}
Since $t \ell =o(n)$ by hypothesis and $m \sim n/3$, we see that $\frac{t}{m-t}=o(1)$ and that each of the factors in the numerator
of the last fraction is $(1+o(1))m$ and each one in its denominator is $(2-o(1))m$. Using $t! \geq (t/e)^t$, this gives
\[ \P(Z^*_\ell \geq t) \leq \bigg(\frac{ e m}{t\left(2-o(1)\right)^{\ell-2}}\bigg)^t \,,\]
which is~\eqref{eq:PathsBound}.

It remains to establish~\eqref{eq-m-dev}. To this end, as before we can move to the setting where $\tW$ is a binary string of length $n-m-1$. In this case however we take each coordinate as an independent Bernoulli($\frac{m-1}{n-m-1}$) variable. We will account for $\P(\sum_i \tW_i = m-1) \gtrsim 1/\sqrt{m}$ later. For notational convenience, extend $\tW$ by defining $\tW_0 = \tW_{n-m} = 1$.

Let $\tZ_\ell$ ($\ell\geq 2$) count the number of parts of size $\ell$ in
the composition corresponding to $\tW$.
We can write $\tZ_\ell$ as a sum of indicators, $\tZ_\ell = \sum_{i=0}^{n-m-\ell} \tI_i$, where $\tI_i = \tI_i(\ell)$ is the event that
coordinates $i,i+1,\ldots,i+\ell-1$ form a part of size exactly $\ell$; that is,
 \[ \tI_i= \left\{ \begin{array}
   {cl}    \tW_i=\tW_{i+\ell-1}=1\,,\\
   \tW_{i+1}=\ldots=\tW_{i+\ell-2}=0
 \end{array}\right\}\quad(i=0,\ldots,n-m-\ell+1)\,.\]
Then, for any $i \notin \{0,n-m-\ell+1\}$,
\begin{equation}
  \label{eq-P(Ii)}
  \P(\tI_i=1) =  \left(\frac12 + O\left(|\tfrac{m}n-\tfrac13|\right) +O(1/n)\right)^{\ell} = \left(\tfrac12+o(1)\right)^\ell \,.
\end{equation}
Similarly the events $\tI_0,\tI_{n-m-\ell+1}$ have a probability of $(\frac12+o(1))^{\ell-1}$ each.
The events $\{\tI_i : i\in A\}$ are clearly mutually independent if $A$ is a set of indices
whose pairwise distances are all at least $\ell$. We can thus partition $\tZ_\ell$ into
$\tZ_\ell = \sum_{j=0}^{\ell-1} \tZ_\ell^{(j)}$ where
\[\tZ_\ell^{(j)} \deq \sum_{i  \equiv j (\bmod{\ell})}\tI_i\] satisfies the following stochastic domination relations for large enough $n$:
\[ \Bin\Big(\big\lfloor\tfrac{n-m-\ell}\ell\big\rfloor,  (2 + \epsilon)^{-\ell}\Big) \preccurlyeq \tZ_\ell^{(j)} \preccurlyeq \Bin\Big(\big\lceil \tfrac{n-m-\ell}\ell \big\rceil, (2- \epsilon)^{-\ell}\Big) + 2\,.\]
(The additive $2$ on the right-hand side accounted for the events $\tI_0,\tI_{n-m-\ell}$.)
The binomial variables on the left and right have means
$(2-o(1))\frac{m}\ell(2+\epsilon)^{-\ell}$ and $(2+o(1))\frac{m}\ell(2-\epsilon)^{-\ell}$ respectively. Thus,
\[\P \left(\frac{m}\ell \frac{2-\epsilon}{(2+\epsilon)^{\ell}} < \tZ_\ell^{(j)} <  \frac{m}\ell \frac{2+\epsilon}{(2-\epsilon)^{\ell}} \right) \geq 1-O\left(e^{-(1-o(1))\frac16 \epsilon^2 \frac{m}{\ell} 3^{-\ell}}\right) \geq 1-O\left(e^{-\frac18 \epsilon^2 m / (\ell 3^\ell)}\right)\,,\]
where the first inequality follows from standard large deviation estimates for the binomial (see, e.g.,~\cite{JLR}*{Corollary~2.3}). A union bound over $j=0,\ldots,\ell-1$ now completes the proof.
\end{proof}

As a corollary, we have the following rough bound, which for $m\sim n/3$ improves on the trivial estimate $\sN(S)\le k^{n/k}m!2^m$ from~\eqref{eq-N(S)-overestimate} for all but an ultimately negligible proportion of the intersection patterns.
\begin{lemma}\label{l:better}
Let $E = \{ S\in\inter_{0,m} \;:\; \sN(S)>(\log k)^{3n/k}m!2^m \}$ for $m$ satisfying $m=(\frac13 +o(1))n$. Then
\[ \sum_{S\in E}\sN(S)^2 = o\left((m!2^m)^2|\inter_{0,m}|\right)\,.\]
\end{lemma}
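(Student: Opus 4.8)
The plan is to contain the exceptional set $E$ inside the (rare) event that the pattern $S$ has many long paths, estimate that event via the first part of Lemma~\ref{lem:binomial}, and pay for the squaring using the universal bound $\sN(S)\le k^{n/k}m!2^m$ from~\eqref{eq-N(S)-overestimate}. (We may assume $k\to\infty$, as is the case wherever this lemma is applied; if $k$ stays bounded then once $(\log k)^3>k$ the threshold $(\log k)^{3n/k}$ already exceeds $k^{n/k}$, so $E=\emptyset$.) Fix $\ell=\lfloor(\log k)^2\rfloor$ and $r^\ast=\big\lfloor\tfrac{n\log\log k}{k\log k}\big\rfloor$, and set
\[
E' = \{\,S\in\inter_{0,m}\ :\ S\text{ has more than }r^\ast\text{ paths with more than }\ell\text{ vertices}\,\}.
\]

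First I would show $E\subseteq E'$ by sharpening~\eqref{eq-N(S)-overestimate}. Recall the encoding behind that bound: a $\rod$ factor counted by $\sN(S)$ is specified by an ordering of the $m$ paths of $S$ whose partial sums of path sizes hit every multiple of $k$ (at most $m!$ choices), an orientation of each path ($2^m$ choices), and, for each of the $n/k$ cycles, a root chosen \emph{from the vertices of the first path of that cycle}; estimating every first path by its at most $k$ vertices recovers~\eqref{eq-N(S)-overestimate}. But the $n/k$ first paths are distinct paths of $S$, so if $S\notin E'$ then at most $r^\ast$ of them have more than $\ell$ vertices and the number of root choices is at most $k^{r^\ast}\ell^{n/k}$; hence $\sN(S)\le 2^m m!\,k^{r^\ast}\ell^{n/k}$. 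Since $r^\ast\le\tfrac{n\log\log k}{k\log k}$ gives $k^{r^\ast}\le(\log k)^{n/k}$ while $\ell^{n/k}\le(\log k)^{2n/k}$, this is at most $(\log k)^{3n/k}m!2^m$, so $S\notin E$; thus $E\subseteq E'$.

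Next I would bound $\P(S\in E')$ for $S$ uniform in $\inter_{0,m}$ by the first part of Lemma~\ref{lem:binomial} with $t=r^\ast+1$ and threshold $\ell$ (its hypothesis $t\ell=o(n)$ holds since $\tfrac{(\log k)^2\log\log k}{k}=o(1)$ and $k\le n$), giving $\P(S\in E')\le\big(4em/((r^\ast{+}1)(2-o(1))^{\ell})\big)^{r^\ast+1}$. From $r^\ast+1>\tfrac{n\log\log k}{k\log k}$ and $m\le n$ one gets $\tfrac{m}{r^\ast+1}<\tfrac{k\log k}{\log\log k}$, so $\log\tfrac{4em}{r^\ast+1}\le 2\log k$ for $k$ large, while $\ell\log(2-o(1))\ge\tfrac14(\log k)^2$; hence $\log\P(S\in E')\le(r^\ast{+}1)\big(2\log k-\tfrac14(\log k)^2\big)\le-\tfrac18(r^\ast{+}1)(\log k)^2\le-\tfrac18\,\tfrac{n(\log k)(\log\log k)}{k}$. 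Since $|E'|=\P(S\in E')\,|\inter_{0,m}|$, it follows that
\[
\sum_{S\in E}\sN(S)^2\ \le\ \sum_{S\in E'}\sN(S)^2\ \le\ \big(k^{n/k}m!2^m\big)^2|E'|\ =\ k^{2n/k}\,\P(S\in E')\cdot(m!2^m)^2|\inter_{0,m}|,
\]
and $k^{2n/k}\P(S\in E')\le\exp\big(\tfrac{n\log k}{k}(2-\tfrac18\log\log k)\big)\to0$ because $\log\log k\to\infty$ and $\tfrac nk\log k\ge\log k\to\infty$. This is exactly the asserted $o\big((m!2^m)^2|\inter_{0,m}|\big)$.

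The step I expect to be the real obstacle is extracting a bound on $\sN(S)$ that depends \emph{only} on how many long paths $S$ has and is still strong enough to beat the factor $k^{2n/k}$ introduced when $\sN(S)$ is replaced by $\sN(S)^2$. The squeeze is tightest when $k\asymp\log n$: there $\log m\approx\log n$ vastly exceeds $(\log k)^2$, so forbidding a \emph{bounded} number of long paths would not make $E'$ rare, and one must instead allow the threshold count $r^\ast$ to grow like $n/(k\log k)$, which keeps $\log(m/r^\ast)$ of order $\log k$ and lets the super-exponential factor $(2-o(1))^{\ell}$ in Lemma~\ref{lem:binomial} win. Choosing the pair $(\ell,r^\ast)$ so that simultaneously $k^{r^\ast}\ell^{n/k}\le(\log k)^{3n/k}$ and $k^{2n/k}\P(S\in E')\to0$ is precisely where the exponent $3$ in the statement of Lemma~\ref{l:better} is used.
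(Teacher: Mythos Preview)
Your proof is correct and follows essentially the same approach as the paper: the paper likewise chooses $t=\frac{n}{k}\frac{\log\log k}{\log k}$ and $\ell=\log^{2}k$, shows that any $S$ with at most $t$ long paths satisfies $\sN(S)\le k^{t}\ell^{n/k}m!2^{m}\le(\log k)^{3n/k}m!2^{m}$ (so lies outside $E$), and then bounds the contribution of the remaining $S$ via~\eqref{eq:PathsBound} together with the crude estimate~\eqref{eq-N(S)-overestimate}. Your write-up is somewhat more explicit in the numerics, but the idea, the parameters, and the two-step decomposition are the same.
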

\begin{proof}
Apply the first part of Lemma~\ref{lem:binomial} with
\[
t=\frac{n}k \frac{\log \log k}{\log k}\,,\quad \ell=\log^{2} k\,\]
(noting that $t\ell = o(n)$ as needed in that lemma). as the base of the exponent in~\eqref{eq:PathsBound} for this choice of $t$ and $\ell$ is
$\left(2-o(1)\right)^{-\log^2 k}$, the probability of at least $t$ paths of length at least $\ell$ is at most
\[
\left(2-o(1)\right)^{-(n/k)\log k \log \log k}
< k^{-5 n/k}\,
\]
for any sufficiently large $k$.
Revisiting~\eqref{eq-N(S)-overestimate}, such path partitions contribute $o((m!2^m)^2|\inter_{0,m}|)$ to the summation of $\sN(S)^2$.
On the other hand, the partitions $S$ with at most $t$ paths of length at least $\ell$ have less than $k^t
\ell^{n/k}$ ways to choose the root vertices from the initial paths of each cycle, and hence \[ \sN(S)\le k^t \ell^{n/k}m!2^m = e^{(n/k)( \log \log k+2\log \log k)}m!2^m\,,\]
and so no such partitions are in $E$. The result now follows.
\end{proof}

\section{Cycle-free intersection patterns and renewals}\label{sec-intersection}

\subsection{Normal and abnormal intersection patterns}\label{subsec:normal-abnormal-def}

At its most na\"ive level, the proof of Theorem~\ref{thm-N(S)} would like to estimate (really meaning bound) $\sN(S)$ for a given $S$ by randomly
ordering the paths and estimating --- via the results of~\S\ref{sec-renewals} --- the probability that all multiples of $k$ are partial sums of the
resulting sequence of path sizes, as well as bounding the number of corresponding $\rod$ factors. (Each ordering with this property --- already mentioned following
Theorem~\ref{thm-N(S)} --- gives rise to a number, crudely between $2^{|S|}$ and $2^{|S|}k^{n/k}$, of $\rod$ factors via directing paths
and rooting cycles, while other orderings do not contribute to $\sN(S)$. However, such bounds are too rough for our ultimate goal.)

The most obvious problem with this is that the statistics ($p_\ell$) of $S$ itself, or of some of the subsets of $S$ remaining after some
paths have been chosen (``suffixes''), might not support the use of Theorem~\ref{thm-renewals}.
In extremely rough terms, one may say that much of the work below is aimed at reducing to situations where these results do apply (though even in the ideal situation where all such suffixes do behave nicely, treated in Theorem~\ref{thm:goodSuffixes} below,
the analysis is more delicate than the preceding description suggests).

We will use --- with various values of $\delta$ --- the following concrete conditions
supporting use of our renewal estimates.

\begin{definition}[$\delta$-normal intersection patterns]
  \label{def:delta-normal}
  Let $S$ be a set of $m$ paths and let $x_1,\ldots,x_m$ denote the lengths (numbers of vertices) of these paths.
The \emph{path-distribution} $(p_\ell)_{\ell\ge 1}$ of $S$ gives the relative frequencies of the $x_i$'s, namely $p_\ell = \frac1m |\{j:x_j=\ell\}|$ for $\ell>0$.
 Given   $0<\delta<1$,  we say that $S$ is $\delta$-\emph{normal}
 if the following conditions hold:
 \begin{enumerate}[(i)]
\item Short paths: $p_1=0$ and for all $\ell \leq \sM := \frac18 \log \log k$,
\begin{align}
  \label{eq-normal-short}\left|p_\ell-2^{1-\ell}\right| \leq \epsilon_\ell = \epsilon_\ell(\delta) :=
  \left(\ell^4 (2-\delta)^\ell\log^{1/8} k\right)^{-1}\,.
  \end{align}
(The lower bound on $p_\ell$ is meaningful since $\epsilon_\ell \leq e^{-\ell}$ for any $\ell \leq \sM$, recalling $k\to\infty$.)
\item Long paths: for all $\ell \geq \sM$,
\begin{align}
  \label{eq-normal-long}
p_\ell \leq \gamma_\ell = \gamma_\ell(\delta) := \left(\ell^4 (2-\delta)^\ell\right)^{-1}\,.
\end{align}
 \end{enumerate}
\end{definition}

\begin{remark}
  \label{rem:delta-normal-cons}
For every $\delta$-normal $S\in\inter_{0,m}$,
there are no paths of length $\ell \geq \log_{2-\delta} n$ (as $k\le n$ so such paths are long, and then $\gamma_\ell \ll 1/n \leq 1/m$), and the following hold:
\begin{align}
&m = \left(\tfrac13 + o(1)\right) n\,,\label{eq-dnorm-sum-paths}\\
&\sum_{\ell=2}^{\sM} \ell^2 (2-\delta)^\ell \epsilon_\ell =o(1)\,,\label{eq-dnorm-short}\\
& \sum_{\ell\geq\sM}      \ell ^2  (2-\delta)^\ell p_\ell =o(1)\,. \label{eq-dnorm-long}
\end{align}
(Condition~\eqref{eq-dnorm-sum-paths}, in place of the explicit bound in Theorem~\ref{thm-N(S)}, suffices for much of what we do.)
\end{remark}

In line with our discussion above, we will be able to estimate the number of $\rod$ factors arising from any $\delta$-normal intersection pattern quite accurately, as the following theorem states.
\begin{theorem}\label{thm:normal}
For every $\epsilon>0$ there exists some $0<\delta<1$ such that the following holds: if $k \geq (2+\epsilon)\log_2 n$ and $S\in\inter_{0,m}$ is $\delta$-normal 
then
\[ \sN(S) \leq (3+o(1))m!2^m\,.
\]
\end{theorem}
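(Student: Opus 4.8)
The plan is to express $\sN(S)$ exactly as a weighted count of orderings of the $m$ paths of $S$, and then to evaluate that count using the sharp renewal estimates of \S\ref{sec-renewals}.

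\emph{Reduction.} A $\rod$ factor containing a cycle-free pattern $S\in\inter_{0,m}$ with path lengths $x_1,\dots,x_m$ is the same datum as: (i) an ordering $\sigma\in\cS_m$ of the paths whose length sequence $x_{\sigma(1)},x_{\sigma(2)},\dots$ has \emph{every} multiple of $k$ among its partial sums (a \emph{valid} $\sigma$; those partial sums equal to multiples of $k$ cut $\sigma$ into the $n/k$ cycles, listed in $\rod$ order); (ii) an orientation of each path ($2^m$ choices); and (iii) a root vertex inside the \emph{first} path of each cycle. Hence
\[
\sN(S)=2^m\sum_{\sigma\ \text{valid}}\ \prod_{c=1}^{n/k}x_{\sigma(\mathrm{first}(c))}
=2^m\,k^{n/k}\!\!\sum_{\substack{(A_1,\dots,A_{n/k})\ \text{partition}\\ \sum_{j\in A_c}x_j=k\ \forall c}}\ \prod_{c=1}^{n/k}\bigl(|A_c|-1\bigr)!\,,
\]
the second form coming from averaging the first path's length over the orderings within each block; so it suffices to bound $\E_\sigma[\one_{\{\sigma\ \text{valid}\}}\prod_c x_{\sigma(\mathrm{first}(c))}]$ by $3+o(1)$. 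A short generating-function manipulation (using $\sum_{(b_\ell)\ge0,\ \sum_\ell\ell b_\ell=k}\tfrac{(\sum_\ell b_\ell-1)!}{\prod_\ell b_\ell!}\prod_\ell w_\ell^{b_\ell}=[z^k](-\log(1-\sum_{\ell\ge2}w_\ell z^\ell))$) rewrites this as
\[
\sN(S)=2^m\,k^{n/k}\prod_{\ell\ge2}(p_\ell m)!\ \Bigl[\textstyle\prod_{\ell\ge2}w_\ell^{\,p_\ell m}\Bigr]\,\Phi(w)^{\,n/k},\qquad
\Phi(w):=[z^k]\Bigl(-\log\bigl(1-\textstyle\sum_{\ell\ge2}w_\ell z^\ell\bigr)\Bigr),
\]
a form in which the renewal theory appears through the singularity of $-\log(1-\sum_\ell w_\ell z^\ell)$ at $\sum_\ell w_\ell z^\ell=1$.

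\emph{The renewal picture.} The guiding intuition is to reveal the cycles one at a time: conditioned on the paths $A_1,\dots,A_{c-1}$ used so far, the remaining paths $Y_c$ (of size $M_c=|Y_c|$, summing to $N_c=n-(c-1)k$) are in uniform random order, and the contribution of cycle $c$ equals $\E_{\text{unif perm of }Y_c}[\one_{\{\text{hits }k\}}\,x_{\mathrm{first}}]=\tfrac{N_c}{M_c}\,Q_k^{(Y_c)}$, where $Q_k^{(Y_c)}$ is precisely the \emph{size-biased} renewal probability of Theorem~\ref{thm-renewals-size-biased}. For every interior cycle this is $1+o(1)$ (since $Q_k^{(Y_c)}=\tfrac{M_c}{N_c}+o(\cdot)$), while the terminal cycle, whose residual sums to exactly $k$, contributes $\tfrac{k}{M_{n/k}}=3+o(1)$ because the last cycle uses $(\tfrac13+o(1))k$ paths; multiplying, $\E_\sigma[\one_{\text{valid}}\prod_c x_{\mathrm{first}(c)}]=3+o(1)$.

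\emph{Making this rigorous: the saddle point.} Rather than a numerical cycle-by-cycle recursion I would evaluate the multivariate coefficient extraction above by a saddle-point computation. Its saddle sits (asymptotically) at $w_\ell=p_\ell$, where $\sum_\ell w_\ell z^\ell=f(z):=\sum_{\ell\ge2}p_\ell z^\ell$, since there $\tfrac nk\,\partial_{w_\ell}\Phi/\Phi=\tfrac nk\,R_{k-\ell}\big/[z^k](-\log(1-f))\to m$ for every $\ell$ (using $R_{k-\ell}\to m/n$ and $[z^k](-\log(1-f))\sim1/k$, which follow from the singularity of $1-f$ at $z=1$ as in Lemma~\ref{l:contour}). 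Because $S$ is $\delta$-normal, $f$ lies within $o(1)$ of the geometric generating function $z^2/(2-z)$ on $|z|\le r$ for any $r$ up to (almost) $2$ --- exactly the hypothesis of Theorems~\ref{thm-renewals-estimates}--\ref{thm-renewals} --- so $1-f$ has no zero in $|z|\le r$ other than $z=1$, all non-principal contributions to the coefficient extractions are $O(r^{-k})$, and since $k\ge(2+\epsilon)\log_2 n$ gives $r^{-k}=(2-o(1))^{-k}=o(1/n)$, these survive neither the extraction nor the $n/k$-fold power $\Phi(w)^{n/k}$. Assembling the principal part with the Stirling factors $\prod_\ell(p_\ell m)!$ and the Gaussian (Hessian) fluctuation --- whose magnitude is controlled, via Lemma~\ref{lem:binomial}, by discarding atypical path statistics --- produces the constant $3+o(1)$.

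\emph{The main obstacle.} The crux is that only an error that is \emph{exponentially small in $k$} per ``hit $k$'' survives the $\asymp n/k$-fold accumulation; a $1/\mathrm{poly}$ error would not. This forces the threshold $k\ge(2+\epsilon)\log_2 n$ (so $(2-\epsilon')^{-k}=o(1/n)$), exactly the one in the statement, and it forces the use of the \emph{size-biased} probability $Q_k$, whose expansion carries no $\Theta(1/m)$ term, in place of the ordinary $P_k$ --- the $\Theta(1/m)$ term of $P_k$ would accumulate to $\tfrac1k\sum_j\tfrac1j\cdot\Theta(1)$, a positive constant that would push the bound past $3$ (this is the phenomenon flagged after Theorem~\ref{thm-renewals-estimates}). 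The hardest regime is the ``bottom'': the short residuals $Y_c$ with $M_c=O(k^2)$, where the hypothesis $k=o(\sqrt{M_c})$ of Theorem~\ref{thm-renewals} fails and one cannot even assert approximate $\delta'$-normality of so short a suffix; a naive peeling would here accumulate an unacceptable $\Theta(k)$ error, which is precisely why the generating-function/saddle-point route (treating all scales simultaneously), together with a dedicated argument for the few genuinely small residuals, is needed. Carrying this through while keeping the overall multiplicative error at $1+o(1)$ rather than $n^{o(1)}$ is the technical heart of the proof.
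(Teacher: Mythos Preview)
Your exact generating-function identity
\[
\sN(S)=2^m\,k^{n/k}\prod_{\ell}m_\ell!\;\Bigl[\textstyle\prod_\ell w_\ell^{\,m_\ell}\Bigr]\,\Phi(w)^{n/k},
\qquad \Phi(w)=[z^k]\bigl(-\log(1-\textstyle\sum_\ell w_\ell z^\ell)\bigr),
\]
is correct, as is your location of the saddle at $w_\ell=p_\ell$ and your diagnosis of the two forcing features: the per-cycle error must be $o(1/n)$ (hence exponentially small in $k$, whence the threshold $(2+\epsilon)\log_2 n$), and the size-biased $Q_k$ rather than $P_k$ must drive the recursion so that no $\Theta(1/m)$ term accumulates.

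The route, however, is genuinely different from the paper's and is not carried through. The paper proves Theorem~\ref{thm:normal} exactly by the cycle-by-cycle induction you sketch in your ``renewal picture'' paragraph and then set aside: Theorem~\ref{thm:goodSuffixes} handles the case in which every suffix stays $\delta$-normal (with a separate two-stage argument when fewer than $k^4$ cycles remain, to sidestep the $O(k^4/m^2)$ error), and the upgrade to Theorem~\ref{thm:normal} is a second induction (Lemma~\ref{l:rClose}) that balances the rarity of a first abnormal $r$-suffix (Claim~\ref{clm:normal-abnormal}) against a crude $\exp(rk^\theta)$ bound on the complementary prefix, after a preliminary reduction (Lemma~\ref{lem:normal-no-long}) eliminating paths of length $\ge\log_{2-\delta}k$.

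The gap in your proposal is that the saddle-point computation is not executed, and you yourself concede the ``bottom'' regime still ``needs a dedicated argument'' you do not supply. The assertion that saddle-point ``treats all scales simultaneously'' is precisely what would have to be \emph{proved}: the short-residual difficulty does not disappear, it resurfaces as the question of whether the Gaussian approximation to $[\prod_\ell w_\ell^{m_\ell}]\Phi(w)^{n/k}$ is accurate to a $1+o(1)$ multiplicative factor (not merely $e^{o(n/k)}$), in $\Theta(\log n)$ variables, with the Stirling factors $\prod_\ell\sqrt{2\pi m_\ell}$ cancelling the Hessian determinant to leave exactly the constant $3$. None of the Hessian, contour, or off-saddle control is provided. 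In short: the reformulation is sound and the plan is plausible, but the proposal stops where the work begins, and the inductive route you declined is how the paper actually gets there.
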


Our plan for the remainder of \S\ref{sec-intersection} is as follows.
We begin here with the aforementioned Theorem~\ref{thm:goodSuffixes}, which deals with
situations in which we never encounter an abnormal path distribution;
it is in the proof of this result, which will
be central to our argument, that the results of \S\ref{sec-renewals} will come into play.
We then, in \S\ref{subsec:N(S)-via-normal}, derive Theorem~\ref{thm-N(S)} from Theorem~\ref{thm:normal}.
The proof of Theorem~\ref{thm:normal} itself, the trickiest
part of the whole business, is given in \S\ref{subsec:bad-suffix}.

For $t\in[n/k]$, the {\em $t$-suffix} of a $\rod$ factor is the sequence of paths in its last $t$ cycles.
(To define ``sequence'' we may regard the \emph{first} path in a cycle as the one containing the root, but actually in what follows we will really only
be interested in the \emph{set} of paths in a suffix.)

Let $\sN_\delta(S)$ be the number of $\rod$ factors containing $S$ in which each of the $n/k$ suffixes has a $\delta$-normal path distribution.
(In particular, $\sN_\delta(S)=0$ unless $S$ is $\delta$-normal.)
The following theorem, central to our proof, provides an estimate on $\sN_\delta(S)$ using the estimates on renewal processes without replacement given in~\S\ref{sec-renewals}.

\begin{theorem}\label{thm:goodSuffixes}
For every $\epsilon>0$ there exists some $0<\delta<1$ such that the following holds: if $k \geq (1+\epsilon)\log_2 n$ then for any $S\in\inter_{0,m}$,
\[ \sN_\delta(S) \leq (3+o(1))m!2^m\,.\]
\end{theorem}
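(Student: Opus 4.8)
The plan is to turn $\sN_\delta(S)$ into a product of renewal hitting probabilities, one per cycle, and to extract the constant $3$ as the contribution of the last cycle.

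\emph{Reduction.} A $\rod$ factor containing $S$ is exactly the data of: an ordered list of blocks $B_1,\dots,B_{n/k}$, each $B_j$ an \emph{ordered} sequence of paths of $S$ whose lengths sum to $k$; a direction for each of the $m$ paths; and a root vertex chosen inside the \emph{first} path of each block. Writing $\ell_j(\pi)$ for the length of the first path of the $j$th block determined by a permutation $\pi$ of the $m$ paths of $S$, this gives
\[
\sN_\delta(S)=2^m\sum_{\pi}\one\{\pi\text{ \emph{valid}, all suffixes }\delta\text{-normal}\}\prod_{j=1}^{n/k}\ell_j(\pi),
\]
where ``valid'' means the partial sums of the path lengths of $\pi$ hit every multiple of $k$ (so the blocks are well defined). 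Since $\sN_\delta(S)=0$ unless $S$ is itself $\delta$-normal, we may assume it is; dividing by $m!$, the claim becomes $h_\delta(S):=\E\big[\one_E\prod_{j=1}^{n/k}\ell_j\big]\le 3+o(1)$, where $\pi$ is a uniform permutation of the paths of $S$ and $E$ is the event ``valid and all suffixes $\delta$-normal''.

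\emph{Size-biased peeling.} Reading off the first cycle, and using that $\pi(1)$ is uniform (so $\E[x_{\pi(1)}Z]=\tfrac nm\,\E^{\mathrm{sb}}[Z]$, with the first path size-biased by length and the rest uniform under $\P^{\mathrm{sb}}$), one gets
\[
h_\delta(S)=\frac nm\,\E^{\mathrm{sb}}\big[\one\{\text{first block closes exactly at }k\}\;h_\delta(S\setminus B_1)\big],
\]
because conditioning on $B_1$ leaves a uniform permutation of the suffix $S\setminus B_1$, and the $E$-constraints on cycles $2,\dots,n/k$ are precisely the ones defining $h_\delta$ for the sub-pattern $S\setminus B_1$ (in particular $h_\delta(S\setminus B_1)=0$ unless the $(n/k{-}1)$-suffix $S\setminus B_1$ is $\delta$-normal). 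Now ``first block closes at $k$'' has probability $Q_k$ in the notation of \S\ref{sec-renewals} for the current multiset of paths; for a $\delta$-normal multiset the relative frequencies match, up to $o(1)$, the geometric$(1/2)$ series $g(z)=z^2/(2-z)$ of the Example, so $g$ is holomorphic and $\ne1$ on $|z|<2$, one may take $r$ just below $2$, and $n/m=f'(1)=3+o(1)$ by Remark~\ref{rem:delta-normal-cons}. Hence Theorem~\ref{thm-renewals}(b) (equivalently Theorem~\ref{thm-renewals-size-biased}) applies with constants depending only on $g$ and $r$, and for any sub-pattern $\rho$ of sum $N$ with $|\rho|\gg k^2$ it yields $Q_k^{(\rho)}=\frac{|\rho|}{N}(1+\eta_\rho)$ with $|\eta_\rho|\lesssim (r-\epsilon')^{-k}+k^4/|\rho|^2+o(1/|\rho|)$. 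The prefactor $\tfrac N{|\rho|}=f'_\rho(1)$ cancels the factor $\tfrac{|\rho|}N$, so each peeling of a cycle with $|\rho|\gg k^2$ multiplies $h_\delta$ by $1+O(\eta_\rho)$; for the \emph{last} cycle, whose paths sum to exactly $k$, the renewal hits $k$ with probability $1$, so it contributes the prefactor $f'(1)=3+o(1)$ undiminished — this is the source of the constant. Since $k\ge(1+\epsilon)\log_2 n$, choosing $\delta,\epsilon'$ small enough forces $(r-\epsilon')^{-k}=o(k/n)$, so over the $n/k$ cycles the product of the error factors $1+O(\eta_\rho)$ is $1+o(1)$; it is essential here that the $-c/m$ second-order term present in $P_k$ — which would add an \emph{additive} $\Theta(n/(km))=\Theta(1/k)$ across the cycles and destroy the bound — is \emph{absent} for $Q_k$, which is exactly why the size-biased variant was introduced. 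The hypothesis that \emph{every} suffix, hence every sub-pattern met along the peeling, is $\delta$-normal is what keeps $|f-g|+|f'-g'|$ (and $|f''-g''|$) uniformly $o(1)$ on $|z|\le r$ with a single $\delta=\delta(\epsilon)$, so the renewal estimates apply with constants independent of $S$.

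\emph{The tail (the main obstacle).} The estimate for $Q_k$ is available only while $|\rho|\gg k^2$, and its $O(k^4/|\rho|^2)$ error is summable over cycles only while $|\rho|\gg k^{2+\epsilon}$; so one runs the peeling down only to a remaining pattern $\rho^\star$ of sum $\asymp k^{3+\epsilon}=\operatorname{polylog}n$, at which point the accumulated error is still $o(1)$ and $h_\delta(S)\le(1+o(1))\,h_\delta(\rho^\star)$. It remains to show $h_\delta(\rho^\star)\le 3+o(1)$ for a $\delta$-normal pattern on $\asymp k^{3+\epsilon}$ vertices — this is the crux, as no crude bound on $\prod_j\ell_j$ survives here and the renewal estimates have genuinely failed. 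One natural route is an induction on $n$: the bound for $\rho^\star$ is itself an instance of Theorem~\ref{thm:goodSuffixes} with the role of $n$ played by $\asymp k^{3+\epsilon}$, and since $k\ge(1+\epsilon)\log_2 n\ge(1+\epsilon)\log_2(k^{3+\epsilon})$ for large $n$ the hypothesis persists, so strong induction on $n$ closes the loop — the reduction iterates $O(\log^\ast n)$ times before bottoming out at a $\delta$-normal pattern of bounded size, with the polynomial thresholds taken generously enough that the $O(\log^\ast n)$ accumulated errors still sum to $o(1)$. The hard part is exactly this tail/uniformity analysis; the bulk of the iteration is routine (if lengthy) bookkeeping of the error terms supplied by Theorem~\ref{thm-renewals}.
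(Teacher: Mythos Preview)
Your setup and the ``bulk'' peeling are essentially the paper's: you correctly recognize that rooting the first cycle corresponds to a size-biased first draw, so the relevant hitting probability is $Q_k$ rather than $P_k$, and you correctly identify why the absence of the $-c/m$ term in $Q_k$ is decisive. This matches the paper's inductive step for large $r$ (the paper's threshold is $r\ge k^4$, analogous to your $|\rho|\gg k^{2+\epsilon}$).

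The gap is in the tail. Your proposed ``strong induction on $n$'' does not make progress: you peel until the remaining pattern has $\asymp k^{3+\epsilon}$ vertices, then invoke the theorem with $n$ replaced by $n^\star\asymp k^{3+\epsilon}$. But $k$ is fixed throughout, so in the recursive call the stopping threshold is again $\asymp k^{3+\epsilon}=n^\star$, and you are exactly where you started; no $\log^\ast n$ descent takes place. The obstacle is intrinsic: the $O(k^4/m'^2)$ error in $Q_k$ depends on $k$ and $m'$, not on $n$, so shrinking $n$ alone cannot cure it.

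The paper resolves this with a genuinely different device for small $r$ (specifically $2\le r<k^4$). After placing the size-biased root path, it reveals further paths \emph{uniformly} until the current cycle has $y=\lfloor k-10\log_{2-\delta}k\rfloor$ vertices; at that moment the residual multiset still has $m_0\sim m'$ paths (so the renewal error $O(k^4/m_0^2)$ is harmless), while the remaining target is only $\Theta(\log k)$. Applying Theorem~\ref{thm-renewals}(a) to this short target gives hitting probability $m_0/n_0+O(1/n')$, and a hypergeometric concentration argument shows $m_0/n_0=m'/n'+O(k^{3/4}/n')$. This yields a per-cycle error $\psi(t)=O(k^{3/4})$ for $t<k^4$, and $\sum_{t<k^4}\psi(t)/(kt)=O(k^{-1/4}\log k)=o(1)$. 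Note this step also needs Claims~\ref{clm:normal-abnormal} and~\ref{clm:normal-large-subset} to certify that the intermediate (non-suffix) multiset after revealing up to $y$ is still $\delta'$-normal, a point your $\sN_\delta$ hypothesis does not cover since that hypothesis only controls \emph{suffixes}.
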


The proof of this (somewhat paradoxically) requires bounding the probability of $\delta$-abnormal suffixes. For this we use the next two assertions concerning $\delta'$-normality of subsets of a $\delta$-normal intersection pattern.  The first of these addresses
random (uniform) subsets.

\begin{claim}\label{clm:normal-abnormal}
Fix $0<\delta< \delta'<1$.
 Let $S'$ be a uniform $m'$-subset of a $\delta$-normal set of paths $S\in\inter_{0,m}$. The following hold.
\begin{compactenum}
  [(1)] \item\label{it-abnorm-m'-m} If $m' \geq m/\log\log k$ then \[\P(\mbox{$S'$ is $\delta'$-abnormal}) \leq e^{-m k^{-o(1)}}\,.\]
  \item \label{it-abnorm-log-k} If $S$ has no path of length at least $\log_{2-\delta}k$ and we let $\Sigma'$ denote the number of vertices in $S'$ and $\theta_0= 1 -\log_{2-\delta}(2-\delta')$ then for $n'\le n$,
       \[\P(\mbox{$S'$ is $\delta'$-abnormal}, \Sigma'= n')\leq \sqrt{n'}e^{- n' k^{-1+\theta_0-o(1)}}\,.\]
  \end{compactenum}
\end{claim}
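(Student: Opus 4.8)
The plan is to bound, for each target length $\ell$, the probability that the path-count statistics of $S'$ deviate enough from $(2^{1-\ell})_{\ell}$ to violate the relevant $\delta'$-normality condition, and then union-bound over $\ell$. Since $S$ is $\delta$-normal, its own statistics $p_\ell$ are already within $\epsilon_\ell(\delta)$ (resp.\ below $\gamma_\ell(\delta)$) of the ideal geometric profile, so the only source of error in passing to $S'$ is the sampling fluctuation. The key observation is that sampling an $m'$-subset of $S$ uniformly at random, and then counting how many of the sampled paths have length $\ell$, is a hypergeometric random variable $\mathrm{Hyp}(m, m p_\ell, m')$; one then invokes standard hypergeometric tail bounds (e.g.\ the Chernoff-type bound, which is no worse than the binomial one). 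For part~(1), with $m' \geq m/\log\log k$ and $m p_\ell = (1+o(1))m 2^{1-\ell}$ for the relevant range $\ell \leq \sM = \frac18\log\log k$, the expected count $m' p_\ell$ is of order $m 2^{1-\ell}/\log\log k$, which is at least $m\cdot k^{-o(1)}$ since $2^{-\sM} = k^{-o(1)}$; the allowed relative deviation needed to stay within the $\epsilon_\ell(\delta')$-window is of constant order (more precisely, the gap between the $\delta$- and $\delta'$-windows is $\epsilon_\ell(\delta') - \epsilon_\ell(\delta)$, a fixed polynomial-in-$\ell$ times exponential-in-$\ell$ quantity, hence $\geq k^{-o(1)}$ relatively), so a Chernoff bound gives failure probability $e^{-\Omega(m' p_\ell \cdot (\text{const})^2)} = e^{-m k^{-o(1)}}$; the long-path condition $p_\ell \leq \gamma_\ell$ is handled by the same upper-tail bound. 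A union bound over the at most $\log_{2-\delta}n = O(\log n)$ relevant values of $\ell$ is absorbed into the $k^{-o(1)}$ slack.

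For part~(2), the extra input is that we condition on the total number of vertices $\Sigma' = n'$ in $S'$, and we want a bound that scales with $n'$ rather than with $m$. Here I would first note that $\P(\Sigma' = n') \geq c/\sqrt{n'}$ is not what we need --- rather, we bound $\P(S' \text{ is }\delta'\text{-abnormal} \mid \Sigma' = n')$ and multiply by $\P(\Sigma' = n') \leq 1$, but the cleaner route is to directly bound the joint probability. The point is that conditioning on $\Sigma' = n'$ still leaves the conditional law of $(S', \Sigma')$ amenable: one can think of building $S'$ by adding paths one at a time, and the total vertex count grows; since all paths have length at most $\log_{2-\delta} k$ (by hypothesis), having $\Sigma' = n'$ forces $m' \geq n'/\log_{2-\delta}k$, i.e.\ $m'$ is at least $n' k^{-o(1)}$ up to the $\log$ factor — actually $m' \gtrsim n'/\log k$. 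Then the expected count of length-$\ell$ paths in $S'$, for $\ell$ up to roughly $\log_{2-\delta'}(\text{something})$, is at least $m' 2^{1-\ell} \gtrsim (n'/\log k) (2-\delta)^{-\ell}$, and for the worst relevant $\ell \approx \log_{2-\delta}k$ this is $\gtrsim (n'/\log k) k^{-1} = n' k^{-1-o(1)}$. The definition of $\theta_0 = 1 - \log_{2-\delta}(2-\delta')$ is engineered so that the $\delta'$-window for a path of length $\ell \approx \log_{2-\delta}k$ has width of order $(2-\delta')^{-\ell} \approx k^{-\log_{2-\delta}(2-\delta')} = k^{-(1-\theta_0)}$, so the ratio (window width)/(mean) is $\approx k^{-(1-\theta_0)}/k^{-1} = k^{\theta_0}$ — wait, that is larger than constant, so in fact one needs to track more carefully: the relevant deviation probability via Chernoff is $\exp(-\Omega(\text{mean}\cdot(\text{relative deviation})^2))$, and plugging in mean $\gtrsim n' k^{-1}$ (for the bottleneck length) with a relative deviation that is at worst $k^{-o(1)}$ between the two normality windows gives $\exp(-n' k^{-1+\theta_0 - o(1)})$ after carefully optimizing which $\ell$ is the binding constraint. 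A union bound over $\ell$ and over the at most $n'$ possible values contributes only a $\sqrt{n'}$ (or polynomial) prefactor, swallowed into the stated bound.

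The main obstacle I anticipate is bookkeeping the interplay between the length $\ell$, the two parameters $\delta < \delta'$, and the conditioning on $\Sigma' = n'$ in part~(2): one has to identify precisely which $\ell$ gives the weakest concentration (it will be the largest $\ell$ for which the normality condition is still nontrivial, namely $\ell$ near $\log_{2-\delta}k$, since there the expected counts are smallest and the windows narrowest), verify that the exponent that comes out is exactly $n' k^{-1+\theta_0-o(1)}$ with $\theta_0$ as defined, and confirm that summing the per-$\ell$ failure probabilities only costs a polynomial factor. A secondary technical point is justifying the hypergeometric Chernoff bound under the additional conditioning $\{\Sigma' = n'\}$; the clean way around this is to observe that conditioning on the vertex-count is equivalent to conditioning on $m'$ taking one of a narrow range of values (since path lengths are bounded by $\log_{2-\delta}k$), or alternatively to run the argument with $m'$ itself chosen and then control $\Sigma'$, so that one never actually conditions on a low-probability event inside the Chernoff estimate. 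Once the binding value of $\ell$ is pinned down, both parts reduce to a single clean application of a binomial/hypergeometric tail bound plus a union bound, exactly as in the proof of~\eqref{eq-m-dev} in Lemma~\ref{lem:binomial}.
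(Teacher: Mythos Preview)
Your outline for Part~(1) is right for short paths (hypergeometric count, Hoeffding), but your treatment of the long-path condition has a gap. You say it is ``handled by the same upper-tail bound,'' but for $\ell\ge\sM$ the expected count $m'p_\ell$ can be arbitrarily small, and a Chernoff exponent of order $m'p_\ell$ does not give $e^{-m k^{-o(1)}}$. The paper instead observes that the long-path condition in Part~(1) holds \emph{deterministically}: since $m'\ge m/\log\log k$ one has $p'_\ell\le (m/m')p_\ell\le (\log\log k)\,\gamma_\ell(\delta)$, while $\gamma_\ell(\delta')/\gamma_\ell(\delta)=\big(\tfrac{2-\delta}{2-\delta'}\big)^\ell\ge(\log k)^c$ for all $\ell\ge\sM$, so $p'_\ell<\gamma_\ell(\delta')$ with no randomness involved.

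For Part~(2) your calculation does not produce the stated exponent, and the reason is that you have misidentified the binding constraint. You treat it as a small-relative-deviation problem (``relative deviation that is at worst $k^{-o(1)}$''), which with mean $\asymp n'k^{-1}$ would give exponent $n'k^{-1-o(1)}$, not $n'k^{-1+\theta_0-o(1)}$. The $\theta_0$ actually emerges from the \emph{long-path upper tail}: for $\sM\le\ell\le\log_{2-\delta}k$ the violation threshold is $m'\gamma_\ell(\delta')$, which exceeds the mean $m'p_\ell\le m'\gamma_\ell(\delta)$ by the large multiplicative factor $\big(\tfrac{2-\delta}{2-\delta'}\big)^\ell$. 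The right Chernoff bound is then $\exp\big(-m'(2-\delta')^{-(1+o(1))\ell}\big)$, and since $(2-\delta')^\ell\le k^{1-\theta_0}$ for $\ell\le\log_{2-\delta}k$ (this is precisely the definition of $\theta_0$), the worst case is $\exp(-m'k^{-(1-\theta_0)})$. To convert $m'$ to $n'$ the paper first shows (again via Hoeffding, using the path-length cap) that $\P(\Sigma'=n')\le\exp(-n'k^{-o(1)})$ unless $m'\ge n'/4$; your deterministic bound $m'\ge n'/\log_{2-\delta}k$ would also suffice here. Finally, the $\sqrt{n'}$ prefactor is not a union-bound artifact: it is the cost $1/\P(|\tilde S|=m')$ of passing from the hypergeometric $S'$ to an i.i.d.\ binomial surrogate $\tilde S$ so that the per-$\ell$ tail bounds become clean.
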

\begin{proof}
Throughout this proof, write $p_\ell$ for the relative frequency of $\ell$-vertex paths in $S$ and let $p'_\ell$ be the analogous quantity for $S'$.

Consider some $\ell \leq \sM$, and note that for such $\ell$ we have $\left| p_\ell - 2^{1-\ell} \right| < \epsilon_\ell(\delta)$ by the $\delta$-normality of $S$.
As the number of $\ell$-paths in $S'$ is hypergeometric with mean $m' p_\ell$, Hoeffding's inequality for hypergeometric variables~\cite{Hoeffding}*{Theorem~2 and~\S6} implies that for any $\alpha>0$,
\[ \P\left(\left|p'_\ell - p_\ell\right| \ge \alpha\right) \leq 2\exp(- 2\alpha^2 m'  )\,. \]
Substituting $\alpha=\epsilon_\ell(\delta')/\log k$, for instance, we get that with probability $1-\exp(-m' k^{-o(1)})$ we have $ |p'_\ell - p_\ell | = o(\epsilon_\ell(\delta'))$.
In this case, combining the facts $\left| p_\ell - 2^{1-\ell} \right| < \epsilon_\ell(\delta)$ and $ \epsilon_\ell(\delta')/ \epsilon_\ell(\delta) = \big(1+\frac{\delta'-\delta}{2-\delta'}\big)^\ell > 1$
 implies (recalling $\delta$ and $\delta'$ are fixed) that $|p'_\ell-2^{1-\ell}|<\epsilon_\ell(\delta')$. A union
 bound now establishes the short-path condition for all $\ell\leq\sM$ with the same probability guarantee of $1-\exp(-m' k^{-o(1)})$ (since we can assume $m'\ge 1$).

Part~\eqref{it-abnorm-m'-m} of the claim now easily follows, since the long-path condition is fulfilled deterministically for $S'$, as the following argument shows.
Let $\ell\geq \sM$. By hypothesis, there are at most $m \gamma_\ell(\delta)$ paths of length $\ell$ in $S$, and thus also in $S'$, yielding that
 \[ p'_\ell \leq (m/m')p_\ell \leq \gamma_\ell(\delta) \log\log k\]
using our hypothesis on $m'$ in this part. However, $ \gamma_\ell(\delta')/\gamma_\ell(\delta) = 
\big(1 + \frac{\delta' - \delta}{2-\delta'}\big)^{\ell}$,
which is at least $(\log k)^c$ for some $c=c(\delta,\delta')>0$. For large enough $n$, this outweighs the $\log\log k$ factor from above and gives
$ p'_\ell \leq \gamma_\ell(\delta')$
for all $\ell\geq \sM$.

It remains to prove Part~\eqref{it-abnorm-log-k}. First we will show that $\Sigma' \le 4m'$ with probability $1-\exp(- n' k^{-o(1)})$.
Recall that we can assume that $S$ is $\delta$-normal, and hence $m=m/3+o(n)$ by~\eqref{eq-dnorm-sum-paths}. If $m'$ differs from $n' m/n \sim n'/3$ by more than $ n'/100 $ (say) then, using the bound on the maximal length of a path, Hoeffding's inequality for the concentration of hypergeometric variables~\cite{Hoeffding} again shows that for some absolute constant $c>0$
\[
\P(\Sigma' = n') \leq \exp\bigg(-c \frac{ (n')^2}{m' (\log_{2-\delta} k)^2}\bigg) \leq
\exp\left(- n' k^{-o(1)}\right)\,,
\]
using the fact that $m'\leq n'/2$ (otherwise this probability is $0$ since every path in $S$ has length at least 2).
Comparing this probability with the desired estimate in the statement of the claim, we may now assume $m' \geq n'/4$ (as $\theta_0<1$ is fixed).

It will be convenient to approximate $S'$ via $\tilde{S}$ that contains each path from $S$ independently with probability $m'/m$.
As in the proof of Lemma~\ref{lem:binomial}, since $\P(|\tilde{S}|=m') \gtrsim \frac1{\sqrt{m'}} \gtrsim \frac1{\sqrt{n'}}$  and on this event $\tilde{S}$ has the same distribution as $S'$,
one can infer properties of $S'$ from those of $\tilde{S}$ via a multiplicative cost of $O(\sqrt{n'})$ in the probability.

The number $\ell$-vertex paths in $\tilde{S}$ is simply a $\Bin(m p_\ell, m'/m)$ variable,
where $p_\ell$ is the relative frequency of $\ell$-vertex paths in $S$.
Consider $\sM \leq \ell \leq \log_{2-\delta}k $ (with the upper bound justified by our hypothesis in this part). The number of $\ell$-vertex paths in $S'$ would violate $\delta'$-normality only if it should exceed
\[ m' \gamma_\ell(\delta') = m' \gamma_\ell(\delta) \bigg(\frac{2-\delta}{2-\delta'}\bigg)^\ell\,.\]
As such, large deviation estimates for the binomial distribution, 
together with the fact that $p_\ell \leq \gamma_\ell$ thanks to the $\delta$-normality of $S$,
show that the probability of this event is at most
\[\exp\bigg(-m'\gamma_\ell(\delta) \bigg(\frac{2-\delta}{2-\delta'}\bigg)^{\ell+o(\ell)}\bigg)
= \exp\big(-m'(2-\delta') ^{-(1+o(1))\ell}\big)
\]
(the $o(\ell)$ term is an additive term of order $\log \ell$ working in our favor, but that will not be needed).
Since $\ell \leq \log_{2-\delta} k$ we have
$(2-\delta') ^{\ell}= (2-\delta) ^{(1-\theta_0)\ell} \leq k^{1-\theta_0}$,
and now a union bound over $\sM\leq\ell \leq \log_{2-\delta} k$, including the factor $ \sqrt{n'} $ from above, completes the proof.
\end{proof}
The second of the above-mentioned supporting assertions for proving Theorem~\ref{thm:goodSuffixes} says that for $\delta'>\delta$
and sufficiently large $n$,
any large enough subset of a $\delta$-normal path distribution
is (deterministically) $\delta'$-normal.
\begin{claim}
  \label{clm:normal-large-subset}
Fix $0<\delta<\delta'<1$
 and $\alpha>0$. If $S\in\inter_{0,m}$ is $\delta$-normal and $S'$ is an $m'$-subset of $S$ for $m'\geq m - n k^{-\alpha}$ then $S'$ is $\delta'$-normal for large enough $n$.
\end{claim}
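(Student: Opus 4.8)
The plan is to work entirely with the path-distributions and compare that of $S'$ with that of $S$, exploiting that $S'$ is obtained from $S$ by deleting only $m-m'\le nk^{-\alpha}$ of its paths. First I would record that, by~\eqref{eq-dnorm-sum-paths}, $n\le 4m$ for $n$ large, so $m-m'\le 4mk^{-\alpha}$, whence $m'\ge m/2$ and $\eta:=(m-m')/m'\le 8k^{-\alpha}$ for $n$ large; then, writing $a_\ell,a'_\ell$ for the numbers of $\ell$-vertex paths in $S,S'$, the inequalities $a_\ell-(m-m')\le a'_\ell\le a_\ell$ give, for every $\ell\ge1$,
\[
p'_\ell\le p_\ell(1+\eta)\qquad\text{and}\qquad |p'_\ell-p_\ell|\le\eta\le 8k^{-\alpha},
\]
and in particular $p'_1=0$ since $p_1=0$. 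The conceptual point I would isolate next is that relaxing the parameter from $\delta$ to $\delta'$ loosens both bounds of Definition~\ref{def:delta-normal} by the same factor $\epsilon_\ell(\delta')/\epsilon_\ell(\delta)=\gamma_\ell(\delta')/\gamma_\ell(\delta)=\big(\tfrac{2-\delta}{2-\delta'}\big)^\ell\ge 1+c_0$ for all $\ell\ge1$, with $c_0:=\tfrac{\delta'-\delta}{2-\delta'}>0$ fixed; this exponentially growing slack is what will absorb the $O(k^{-\alpha})$ perturbation.

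For the long-path condition~\eqref{eq-normal-long} with parameter $\delta'$ this is immediate: for $\ell\ge\sM$, $\delta$-normality of $S$ gives $p_\ell\le\gamma_\ell(\delta)$, hence $p'_\ell\le\gamma_\ell(\delta)(1+8k^{-\alpha})\le\gamma_\ell(\delta)(1+c_0)=\gamma_\ell(\delta')$ once $8k^{-\alpha}<c_0$. For the short-path condition~\eqref{eq-normal-short}, for $\ell\le\sM$ I would combine $|p_\ell-2^{1-\ell}|\le\epsilon_\ell(\delta)$ with $|p'_\ell-p_\ell|\le 8k^{-\alpha}$ to get $|p'_\ell-2^{1-\ell}|\le\epsilon_\ell(\delta)+8k^{-\alpha}$, reducing the task to checking $8k^{-\alpha}\le\epsilon_\ell(\delta')-\epsilon_\ell(\delta)$. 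Since $\epsilon_\ell(\delta')-\epsilon_\ell(\delta)\ge c_0\,\epsilon_\ell(\delta)$, and since $\ell^4(2-\delta)^\ell$ is increasing in $\ell$ so that $\min_{\ell\le\sM}\epsilon_\ell(\delta)=\epsilon_{\sM}(\delta)=\big(\sM^4(2-\delta)^{\sM}\log^{1/8}k\big)^{-1}$, one checks (using $2-\delta<2$, so $(2-\delta)^{\sM}\le(\log k)^{1/8}$, together with $\sM^4=(\log k)^{o(1)}$) that $\epsilon_{\sM}(\delta)\ge(\log k)^{-1/4-o(1)}$. Hence $\epsilon_\ell(\delta')-\epsilon_\ell(\delta)\ge c_0(\log k)^{-1/4-o(1)}$, which for $n$ large dominates $8k^{-\alpha}$. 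Together with $p'_1=0$ and the long-path bound, this shows $S'$ is $\delta'$-normal for all sufficiently large $n$.

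The step I expect to be the (only, and rather mild) obstacle is this last comparison: one must verify that the deletion-induced perturbation, of size $\Theta(k^{-\alpha})$, is small next to the \emph{worst-case} slack $\epsilon_{\sM}(\delta')-\epsilon_{\sM}(\delta)$. This works precisely because the threshold $\sM=\tfrac18\log\log k$ is chosen small enough that $\epsilon_{\sM}(\delta)$ is only polylogarithmically (in $k$) small, whereas $k^{-\alpha}$ is polynomially small in $k$, so there is ample room. Everything else is bookkeeping, and — consistent with the statement — the argument is purely deterministic: nothing is used about $S'$ beyond its being a sub-(multi)set of the paths of $S$ together with the hypothesized lower bound on $m'$.
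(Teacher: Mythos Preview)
Your proof is correct and follows essentially the same approach as the paper: bound $|p'_\ell-p_\ell|$ by $O(k^{-\alpha})$ using $m'\sim m$, and then absorb this perturbation in the slack between the $\delta$- and $\delta'$-thresholds, with the key observation that $\epsilon_\ell(\delta)\ge k^{-o(1)}$ for $\ell\le\sM$ (you compute the sharper $(\log k)^{-1/4-o(1)}$, but either suffices). One small slip: in your long-path step you write $\gamma_\ell(\delta)(1+c_0)=\gamma_\ell(\delta')$, whereas in fact $\gamma_\ell(\delta')=\gamma_\ell(\delta)(1+c_0)^\ell$; the inequality you need, $\gamma_\ell(\delta)(1+c_0)\le\gamma_\ell(\delta')$, still holds, so the argument is unaffected.
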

\begin{proof}
Let $p_\ell$ and $p'_\ell$ be the relative frequencies of $\ell$-vertex paths in $S$ and $S'$, resp.

First consider the long-path condition. By the $\delta$-normality of $S$, for any $\ell\geq \sM$ we have at most
$m \gamma_\ell(\delta) $ paths of length $\ell$ in $S$. Thus, using~\eqref{eq-dnorm-sum-paths} to see that $m\sim m'$,
\[ p'_\ell \leq (m/m')\gamma_\ell(\delta) = (1+o(1))\gamma_\ell(\delta)
<\gamma_\ell(\delta')\,,\]
with the last inequality following from the fact, already used in the proof of Claim~\ref{clm:normal-abnormal}, that $\gamma_\ell(\delta')/\gamma_\ell(\delta) = 
\big(1 + \frac{\delta' - \delta}{2-\delta'}\big)^{\ell}\to\infty$ with $\ell$ (as a poly-log).

Now take $\ell \leq \sM$. The $\delta$-normality of $S$ implies that
$\left| p_\ell - 2^{1-\ell} \right| < \epsilon_\ell(\delta)$.
Observe that if $d_\ell \leq n k^{-\alpha}$ is the number of $\ell$-vertex paths in $S\setminus S'$ then
\[ \left|  p'_\ell - p_\ell \right| =\frac{1}{m'} \left| m p_\ell (1-m'/m) - d_\ell\right| \leq \frac{m-m'}{m'} + \frac{d_\ell}{m'} = O(k^{-\alpha})\,.\]
This term is therefore negligible compared to $\epsilon_\ell(\delta) \geq k^{-o(1)}$, and so
\[ |p'_\ell - 2^{1-\ell}| \leq (1+o(1))\epsilon_\ell(\delta)\,.\]
The proof is now concluded by the fact $ \epsilon_\ell(\delta')/ \epsilon_\ell(\delta) = \big(1+\frac{\delta'-\delta}{2-\delta'}\big)^\ell > 1$.
\end{proof}

With the above two claims at our disposal, we turn to Theorem~\ref{thm:goodSuffixes}.

\begin{proof}[\textbf{\emph{Proof of Theorem~\ref{thm:goodSuffixes}}}]
%
Fix $\epsilon_0 > 0$ arbitrarily small.
For sufficiently small $\delta$, the upper bound on $\sN_\delta(S)$ will be established by induction: we will show that if $S'$ has $m'$ paths and a total of $n' = r k$ vertices then deterministically
\[ \sN_\delta(S') \leq (3+\epsilon_0) e^{\sum_{t=2}^{r}\frac{\psi(t)}{k t}} \, (m')! 2^{m'}\,,\]
for some function
\[ \psi(t) = \psi(k,t) = \left\{\begin{array}
  {ll}
  O(k^{3/4})   & t  < k^4\\
   o(1) & t \geq k^4\,.
 \end{array}\right.
\]
This will of course imply the desired statement on $\sN_\delta(S)$ (recalling that $\epsilon_0$ can be taken arbitrarily small) since
then for $r=n/k$ the exponential pre-factor will be
\begin{align*}
1 + O\bigg(\sum_{t<k^4} \frac{k^{3/4}}{kt} \bigg) + o\bigg(\sum_{t= k^4}^{n/k} \frac{1}{kt}\bigg) = 1 + k^{-1/4+o(1)} + o\bigg(\frac{\log n}{k}\bigg) = 1+o(1)\,,
\end{align*}
where the last inequality used the fact that $k$ has order at least $\log n$.

We may (and do) assume that $S'$ is $\delta$-normal, since otherwise $\sN_\delta(S')=0$ by definition. Hence,~\eqref{eq-dnorm-sum-paths} implies that $n' \sim 3 m'$, a fact we will use several times.
For $r=1$ clearly there are $(m'-1)! 2^{m'}$ possibilities for ordering and directing the $m'$ paths into an ordered and directed cycle, which then offers $n'=k$ possible roots to form a $\rod$ factor.
Since  $k=n' \sim 3m'$, indeed $ \sN_\delta(S) \leq (3+\epsilon_0) (m')! 2^{m'}$ for $n$ sufficiently large.

Next, consider $n' = r k$ for some $r > 1$.
A natural way to carry out the induction would be to count the ways to select some ordered subset $A\subset S'$ of paths which together sum up to $k$ vertices, direct each of them, distinguish one of the $k$ points as a root and proceed to select the remaining cycles recursively.
We must then divide by $j$, being the number of ordered subsets that produce the same cyclic order prior to rooting. While the part involving the root is easy, it turns out that estimating the probability that some $j$ random paths sum up to $k$, divided by $j$, would call for a delicate joint estimate of typical values of $j$ and their corresponding probabilities of hitting $k$, and it is unclear how to estimate these probabilities to the desired accuracy.

Instead, here we will compose the first cycle out of some $j$ paths differently. We first select its root (for which there are $n'$ possible candidates) corresponding to some path in $S'$, complement it with a selection of some $j-1$ paths in an ordered manner, multiply by the $2^j$ ways of directing the paths and finally weigh in the probability that the paths thus chosen contain $k$ vertices. This corresponds to the renewal problem studied in \S\ref{sec-renewals}: we choose an element of $S'$ in a size-biased sample (the root path) and permute the remaining elements uniformly, then ask for the probability of hitting $k$ via one of the partial sums.
Let $\hit_k$ denote this event, let $\hit_k^{(j)}$ denote the event of having the first $j$ elements sum to $k$ and, for a given subset $A$, write $\hit_k(A)$ for the event that its elements sum up to $k$.
The astute reader will notice that we have just introduced notation for the very event whose probability we mentioned above is hard to estimate. Next we will proceed to wave our magic wand and make $\hit_k^{(j)}$ disappear in a puff of summation signs.

By slight abuse of notation we write $v\in A$ for a vertex $v$ to denote that $A$ contains a path going through $v$.
Immediately from the definition, any $(r-1)$-suffix of a $\rod$ factor counted in $\sN_\delta(S')$ is also counted in $\sN_\delta(S'\setminus A)$. Hence,
we can apply the induction hypothesis to get that
\begin{align*}
&\sN_\delta(S')= \sum_{j}\sum_{v\in[n']}\sum_{\substack{A \subset S'\\ |A| = j,\,v\in A}}
\!\!\! (j-1)! 2^j \one_{\hit_k(A)} \sN_\delta(S' \setminus A) \\
&\leq \sum_{j}(j-1)! 2^j \!\sum_{v\in[n']} \!\sum_{\substack{A \subset S'\\ |A| = j,\, v\in A}}
\!\!\! \one_{\hit_k(A)} (3+\epsilon)e^{\sum_{t=2}^{r-1}\frac{\psi(t)}{kt}}  (m'-j)! 2^{m'-j} \\
&\leq (3+\epsilon)e^{\sum_{t=2}^{r-1}\frac{\psi(t)}{kt}} n' (m'-1)! 2^{m'} \P(\hit_k)\,,
\end{align*}
where the last inequality is by the fact that the number of choices for a distinguished vertex $v$, followed by $j-1$ \emph{ordered} elements of $S'$ together summing to $k$, is precisely $n'(m'-1)\cdots(m'-j+1)\P\big(\hit_k^{(j)}\big)$, and so we can collect the factorial terms independently of $j$ and remain with $\sum_j \P\big(\hit_k^{(j)}\big) = \P(\hit_k)$.

We now wish to show that $\P(\hit_k) \leq \left(1+\frac{\psi(r)}{n'}\right) \frac{m'}{n'} $.
If $r \geq k^4$ we appeal to the estimate in Theorem~\ref{thm-renewals}(b) for the size-biased renewal process without replacement. Let $g(z)=\sum_{\ell\ge 2} 2^{1-\ell} z^\ell$ and let $R=2-\delta$. Then $g(z)$ is absolutely convergent for $|z|\le R$, where its value is $z^2/(2-z)$, and hence the unique solution of $g(z)=1$ in  $|z|\le R$ is at $z=1$. Also let   $f(z) =\sum_{\ell\ge 2} p_{-\ell} z^\ell$. Since $S'$ is  $\delta$-normal we have $|f(z)- g(z)|+ |f'(z)- g'(z)|+|f''(z)- g''(z)|<w(n)$  for $|z|\le R$, where $w(n)= o(1)$ by~\eqref{eq-dnorm-short} and~\eqref{eq-dnorm-long} and $w$ depends only on $\delta$ and $k$. Thus, $f$ and $g$ satisfy the requirements of Theorem~\ref{thm-renewals}, and so by (b) we have for any $\epsilon'>0$
\[
|Q_k-m'/n'|\le  o(1/m') + O\big((R-\epsilon')^{-k}+ k^4 / (m')^2\big)\,.
\]
Recalling~\eqref{eq-dnorm-sum-paths}, $m'\asymp n'$, and since  $k^4\le r=n/k$ we have  $k^4 /  (m')^2=o(1/n')$. To make the other error term $o((1/n')$  as well, since $k > (1+\epsilon)\log_2 n$ we can simply choose $\delta=\delta(\epsilon)$  sufficiently small such that $( 1+\epsilon)\log_2(2-\delta) > 1$, and then choose $\epsilon'$ to be sufficiently small enough such that
$ (R-\epsilon')^{-k}< (2-\delta-\epsilon')^{(1+\epsilon) \log _2 n}=o(1/n')$.
 That is, we have $\psi(r)=o(1)$ in this case.

When $2 \leq r \leq k^4$ (equivalently, $n' \leq k^5$) we  modify this strategy to bypass the error term $O(k^4(n')^{-2})$ (which is large enough to foil the entire framework already when $n' \asymp  k^2$).
Again we choose the first path in a size-biased way, so as to determine the root of the next cycle, but now we condition on this path. Denoting its length by $\ell_0$, observe that $\ell_0\leq \log_{2-\delta}n' \leq 5 \log_{2-\delta} k$ by the assumption that $n' = rk \leq k^5$.

Note that the remaining paths have a deterministically $\delta_0$-normal path distribution for any $\delta_0>\delta$ and large enough $n$ thanks to Claim~\ref{clm:normal-large-subset} (as we move from a $\delta$-normal set of $m'$ paths to an $(m'-1)$-subset of it).


Subsequently, we choose paths uniformly (without replacement), but this time we do so until reaching (or exceeding) a total of $y = \lfloor k - 10\log_{2-\delta} k\rfloor$ vertices in these paths (including the first one). Let $m_0$ and $n_0$ denote the numbers of remaining paths and vertices at that point, and write
 \[ \Delta_m = m' - 1 - m_0\,,\quad \Delta_n = n' - \ell_0 - n_0\]
 for the numbers of paths and vertices sampled uniformly without replacement en route.
 Again, the maximal path length assumption implies that
 \begin{equation}
   \label{eq-n0-n'-y}
    n'-y- 5\log_{2-\delta}k < n_0 \leq n'-y\,,
 \end{equation}
 whence $\Delta_n  = y-\ell_0 + O(\log k)$.

We now claim that $|\Delta_m - \frac{m'-1}{n'-\ell_0} (y-\ell_0)| \leq k^{3/4}$ except with probability, say, $O(k^{-100})$.
Indeed, the sum $\Sigma$ of the lengths of the paths $2,\ldots,w$ sampled as above is hypergeometric with mean $(w-1)(n'-\ell_0)/(m'-1)$. Thus for a specific value of $w$ where $2\leq w\leq k$ with $|w - \frac{m'-1}{n'-\ell_0}(y-\ell_0)|>k^{3/4}$, and a
specific $v$ with $0 \leq v\leq 5\log_{2-\delta}k$, we have that
\[ \P(\Sigma = n'-y - v ) \leq \exp\bigg(-(1-o(1)) \frac{k^{3/2}}{2w}\bigg) \leq \exp\left(-(1/2-o(1))\sqrt{k}\right)\]
by Hoeffding's inequality (using our bound on the maximal path length). Summing this probability over $w$ and $v$ is easily $O(k^{-100})$, and we may assume henceforth that this event does not occur.

Finally, we wish to infer from Claim~\ref{clm:normal-abnormal} that the path distribution on the remaining $m_0$ paths (uniformly chosen out of the $m'-1$ paths that are left after positioning the leading size-biased one) is $\delta'$-normal for any $\delta'>\delta_0$ except with probability $O(k^{-100})$. This is achieved as follows:
\begin{compactitem}[\indent$\bullet$]
  \item If $r \geq \log k$ then $m_0\sim m'$ and so Part~\eqref{it-abnorm-m'-m} of that claim implies that the probability of a $\delta'$-abnormal path distribution is at most  $\exp(-k^{1-o(1)})$
      (even after enumerating over the possible values of $m_0$).
  \item If $r \leq \log k$ then $\log_{2-\delta} k\sim\log_{2-\delta} n'$, and since the maximal length of a path in $S$ is $o(\log_{2-\delta} n')$, we may appeal to Part~\eqref{it-abnorm-log-k} of that claim. This results in a probability of $\exp(-k^{\theta_0+o(1)})$ for a $\delta'$-abnormal path distribution, even after enumerating over the (polynomial in $k$) number of possibilities for $m_0$ and the total number of paths in our $m_0$-subset.
\end{compactitem}

We now proceed to choose paths uniformly from the remaining $m_0$ paths without replacement. Our target to hit is $k-\Delta_n$. Defining   $g$, $R$ and $f$ as for our application of Theorem~\ref{thm-renewals}(b) above,  we may appeal to the renewal without replacement estimate from Theorem~\ref{thm-renewals}(a), and deduce that the probability of hitting $k-\Delta_n$ is
\[\frac{m_0}{n_0} + O\left((2-\delta')^{-\left(k-(n'-n_0)\right)}\right) + O(1/n')\,.\]
It is easy to see (using~\eqref{eq-n0-n'-y}) that
\[\frac{m_0}{n_0} =\frac{m'-1-\Delta_m}{n'-\ell_0-\Delta_n}
 = \frac{m'-1}{n'-\ell_0} + O\left(\frac{k^{3/4}}{n'}\right) + O\left(\frac{\log k}{n'}\right)
\,.\]
Additionally,
\[ \frac{m'-1}{n'-\ell_0} = \frac{m'}{n'} + O\left(\frac{\ell_0}{n'}\right) = \frac{m'}{n'} + O\left(\frac{\log k}{n'}\right)\,,\]
and combining these gives that the hitting probability is $\frac{m'}{n'} + O(k^{3/4}/n')$.
That is, $\psi(r) = O(k^{3/4})$ in this case, as required.
\end{proof}

\subsection{Proof of Theorem~\ref{thm-N(S)} modulo Theorem~\ref{thm:normal}}
\label{subsec:N(S)-via-normal}

Fix $0 < \delta' < \delta < 1$.
For the reduction to $\delta$-normality (and applicability of Theorem~\ref{thm:normal}),
it will turn out that long paths, and
in particular violations of~\eqref{eq-normal-long}, are our main concern,
since the short-path condition~\eqref{eq-normal-short}, even in its more restrictive
version with $\delta'$, is satisfied with high enough probability that we may more
or less ignore path distributions that violate it.
We will deal with the long paths by disposing of them (that is, choosing
the cycles containing them) first.
Of course, this entails making sure that this ``preprocessing"
(i) is affordable and (ii) doesn't (usually) too badly distort the
path distribution of what remains (a point that will exploit
the slack between $\delta'$ and $\delta$).

Recall that the short-path condition~\eqref{eq-normal-short} for $\delta'$ says that
\[ \left| p_\ell - 2^{1-\ell}\right| < \epsilon_\ell = \epsilon_\ell(\delta') = \frac{1}{\ell^4 (2-\delta')^\ell \log^{1/8} k} \]
for $\ell\leq\sM = \frac18\log\log k$. Notice that if $\left|x - 2^{1-\ell}\right| > \epsilon_\ell$ for some $\ell \leq \sM$, then we violate the estimate
$ \frac{2-\epsilon}{(2+\epsilon)^\ell} < x < \frac{2+ \epsilon}{(2-\epsilon)^\ell}$ (from~\eqref{eq-m-dev})
for $\epsilon = (\log n)^{-1/3}$, since the terms sandwiching $x$ also sandwich $2^{1-\ell}$ and differ by a factor $1+O(\epsilon \ell) = 1+(\log n)^{-1/3+o(1)}$, whereas
$\epsilon_\ell > (\log k)^{-1/4} > (\log n)^{-1/4}$ for large enough $n$.

Plugging the hypothesis $|\frac{m}n - \frac13| < (\log n)^{-1/3}$ of Theorem~\ref{thm-N(S)} into~\eqref{eq-m-dev}
establishes that the probability of violating the short-path condition
w.r.t.\ $\delta'$ is at most $\exp(-n k^{-o(1)} \log^{-2/3} n) \leq \exp(-n k^{-2/3-o(1)})$.
Comparing this to the factor $k^{n/k}$ in~\eqref{eq-N(S)-overestimate}, we see that the
total contribution to $\sum \sN(S)^2$ from intersection patterns violating the short-path condition is $o\left((m!2^m)^2|\inter_{0,m}|\right)$.
(We could alternatively appeal to Lemma~\ref{l:better} for a factor of $(\log k)^{3n/k}$ but that would make little difference here.)
In what follows, we may thus confine our attention to intersection patterns satisfying the short-path condition w.r.t.\ $\delta'$.

 Next, consider an intersection pattern $S$ violating the long-path condition w.r.t.\ $\delta'$. (Recall this says that $ p_\ell \leq \gamma_\ell(\delta') = 1/\left(\ell^{4}(2-\delta')^{\ell}\right)$ for every $\ell\geq \sM$.)

We first claim that, following the same line of argument used above, we can reduce to the situation where no $p_\ell$ violates this condition for any $\sM \leq \ell \leq \sM^*_{\delta'}$, with
\[ \sM^*_{\delta'} := \log_{2-\delta'}\left(k/\log^5 k\right)\,.\]
Indeed, for such $\ell$ we have $\gamma_\ell(\delta') \geq c\frac{\log k}k$ for $c=c(\delta')>0$, thus Eq.~\eqref{eq:PathsBound} with $t=m\gamma_\ell$ and (noting that $\ell \gamma_\ell \to 0$ as $\ell\to\infty$ and in particular $t\ell = o(m)$) yields
\[ \P\left(p_\ell \geq \gamma_{\ell}(\delta') \right) \leq \left( c' \ell^{4} (1-\delta'/2+o(1))^{\ell}\right)^{ c m\frac{ \log k}k} \leq e^{-c'' (n/k)\log k \log\log k}\,.
\]
As before, the probability of encountering such an $S$ nullifies its contribution to $\sN(S)^2$, even via the rough overestimate of $k^{n/k}$ from~\eqref{eq-N(S)-overestimate}.

At this point, only $\ell$-vertex paths with $\ell \geq \sM^*_{\delta'}$ can potentially violate the conditions required for $\delta$-normality.
 For such $\ell$ with $p_\ell\geq\gamma_\ell(\delta')$, label an arbitrary set of $\lceil (p_\ell - \gamma_\ell(\delta'))m \rceil$ paths of length $\ell$ as \emph{excess}.

 Roughly speaking, our goal at this point is to somehow eliminate the excess paths and reduce to a $\delta$-normal (as opposed to a $\delta'$-normal) path distribution, to which we can, finally, apply Theorem~\ref{thm:normal}.
A key point is that we can achieve this by revealing the cycles (of the $\rod$ factor) containing the excess paths \emph{first}.
The number of ways to choose these early cycles will be somewhat larger than we would like, but this is more than compensated for by the fact that excesses are rare.

Let $R=R(S,\delta')$ denote the number of excess paths in $S$.

\begin{claim}\label{clm:excess-dist}
Let $S\in\inter_{0,m}$ be a uniformly chosen intersection pattern and let $\delta'>0$. Then
  \[
  \P(R \geq r) \leq k^{-\frac{\delta'-o(1)}{2\log(2-\delta')} r }\quad\mbox{for any $r\geq 1$}\,.
  \]
\end{claim}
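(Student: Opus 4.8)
The starting point is the clean identity $e_\ell=(d_\ell-g_\ell)^+$, where $d_\ell$ denotes the number of $\ell$-vertex paths of $S$, $g_\ell:=\lfloor\gamma_\ell(\delta')\,m\rfloor$, and $e_\ell$ is the number of excess paths of length $\ell$, so that $R=\sum_{\ell\ge\sM^*_{\delta'}}e_\ell$; indeed $e_\ell=\max\{0,\lceil(p_\ell-\gamma_\ell(\delta'))m\rceil\}$ and $\lceil d_\ell-\gamma_\ell(\delta')m\rceil=d_\ell-\lfloor\gamma_\ell(\delta')m\rfloor$ since $d_\ell\in\mathbb Z$. Passing to the composition model as in Lemma~\ref{lem:binomial} and writing $N_{\ge\ell}=\sum_{j\ge\ell}d_j$ for the number of paths of length at least $\ell$, recall that \eqref{eq:PathsBound} gives $\P(N_{\ge\ell}\ge t)\le\big(4em/(t(2-o(1))^\ell)\big)^t$ whenever $t\ell=o(n)$. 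The plan is to condition on the \emph{shortest excess length}: for $S$ with $R\ge1$ set $\lambda^*=\lambda^*(S):=\min\{\ell:e_\ell\ge1\}$ (necessarily $\lambda^*\ge\sM^*_{\delta'}$), and decompose $\{R\ge r\}$ into the disjoint events $\{R\ge r,\ \lambda^*=\lambda\}$, $\lambda\ge\sM^*_{\delta'}$.

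\textbf{The per-$\lambda$ estimate.} The engine of the proof is the deterministic bound: on $\{R\ge r,\ \lambda^*=\lambda\}$ one has $N_{\ge\lambda}\ge r+g_\lambda$. Indeed every $\ell$ with $e_\ell\ge1$ satisfies $\ell\ge\lambda$ and $d_\ell=g_\ell+e_\ell$, so $N_{\ge\lambda}=\sum_{\ell\ge\lambda}d_\ell\ge\sum_{e_\ell\ge1}(g_\ell+e_\ell)\ge g_\lambda+\sum_\ell e_\ell=g_\lambda+R\ge g_\lambda+r$. (The extra $g_\lambda$ is the crucial point --- it is what keeps the estimate strong enough for small $r$.) Hence, provided $(r+g_\lambda)\lambda=o(n)$,
\[
\P(R\ge r,\ \lambda^*=\lambda)\ \le\ \P\big(N_{\ge\lambda}\ge r+g_\lambda\big)\ \le\ \Big(\tfrac{4em}{(r+g_\lambda)\,(2-o(1))^\lambda}\Big)^{r+g_\lambda}\ \le\ \psi_\lambda^{\,r},
\]
where $\psi_\lambda:=\tfrac{4em}{(2-o(1))^\lambda}$ when $g_\lambda=0$, and $\psi_\lambda:=8e\,\lambda^4\big(\tfrac{2-\delta'}{2-o(1)}\big)^\lambda$ when $g_\lambda\ge1$ (using $g_\lambda\ge\tfrac12\gamma_\lambda(\delta')m$ once $n$ is large, and $0<\psi_\lambda<1$ for $\lambda\ge\sM^*_{\delta'}$ to discard the surplus exponent $g_\lambda\ge0$). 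One always has $g_\lambda\lambda\le\gamma_\lambda(\delta')m\,\lambda=m/(\lambda^3(2-\delta')^\lambda)=o(n)$, so the hypothesis of \eqref{eq:PathsBound} is met throughout the range that matters; the ``far'' terms where $r\lambda$ fails to be $o(n)$ are absorbed via $R\le N_{\ge\sM^*_{\delta'}}\le n/\sM^*_{\delta'}$ and a crude large-deviation bound (the target being trivially small there).

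\textbf{Summation and the exponent.} It remains to estimate $\sum_{\lambda\ge\sM^*_{\delta'}}\psi_\lambda^{\,r}$. The sequence $\psi_\lambda$ decays geometrically in $\lambda$ --- with ratio tending to $\tfrac1{2-o(1)}$ in the $g_\lambda=0$ regime, to $1-\delta'/2$ in the $g_\lambda\ge1$ regime, and bounded below $1$ across the single boundary value at which $\gamma_\lambda(\delta')m$ passes through $1$ --- so the sum is $O_{\delta'}\!\big(\psi_{\sM^*_{\delta'}}^{\,r}\big)$. Finally, using $\gamma_{\sM^*_{\delta'}}(\delta')m\ge1$ (valid since $m=(\tfrac13+o(1))n$, so $m/k$ is bounded away from $0$) and $(2-\delta')^{\sM^*_{\delta'}}=k/\log^5k$,
\[
\psi_{\sM^*_{\delta'}}\ \le\ 8e\,(\sM^*_{\delta'})^4\Big(\tfrac{2-\delta'}{2-o(1)}\Big)^{\sM^*_{\delta'}}
=\ k^{o(1)}\cdot(1-\delta'/2)^{\sM^*_{\delta'}}=k^{o(1)}\cdot(k/\log^5k)^{-\beta}=k^{-\beta+o(1)},\qquad \beta:=\log_{2-\delta'}\!\tfrac{2}{2-\delta'},
\]
because $\sM^*_{\delta'}=\Theta(\log k)$ forces $(\sM^*_{\delta'})^4$ and $(1+o(1))^{\sM^*_{\delta'}}$ to be $k^{o(1)}$, and $\log_{2-\delta'}(1-\delta'/2)=-\beta$. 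Since $\beta=-\ln(1-\delta'/2)/\ln(2-\delta')\ge(\delta'/2)/\ln(2-\delta')\ge\tfrac{\delta'}{2\log(2-\delta')}$ (by $-\ln(1-x)\ge x$; the last step is an equality if $\log=\ln$ and a strict inequality if $\log=\log_2$), we get $\psi_{\sM^*_{\delta'}}^{\,r}\le k^{-\frac{\delta'}{2\log(2-\delta')}r+o(r)}$, and absorbing the $O_{\delta'}(1)=k^{o(1)}\le k^{o(r)}$ prefactor yields $\P(R\ge r)\le k^{-\frac{\delta'-o(1)}{2\log(2-\delta')}r}$, as claimed.

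\textbf{Main obstacle.} The difficulty is not in the structure of the argument but in carrying out the two displays above \emph{uniformly} over the allowed parameters: since $k$ may be as small as $\Theta(\log n)$, so that $\log k\to\infty$ only very slowly, one must carefully separate the genuinely negligible factors (powers of $\sM^*_{\delta'}$, fixed constants, and $(2-o(1))/(2-\delta')$ raised to the $\Theta(\log k)$-th power, all of which are $k^{o(1)}$) from the factor $k^{-\beta}$ that carries the entire estimate; one must also confirm the geometric decay of $\psi_\lambda$ persists across the $g_\lambda=0$/$g_\lambda\ge1$ transition and that Lemma~\ref{lem:binomial}'s hypothesis $(r+g_\lambda)\lambda=o(n)$ holds in the main range with crude bounds disposing of the far tail in $r$ and in $\lambda$.
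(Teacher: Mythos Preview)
Your proof is correct and follows essentially the same approach as the paper: decompose by the smallest excess length $\lambda^*$, use the deterministic bound $N_{\ge\lambda}\ge g_\lambda+r$, apply Lemma~\ref{lem:binomial}, and sum the resulting geometric series in $\lambda$. The only cosmetic difference is that the paper avoids your $g_\lambda=0$ versus $g_\lambda\ge1$ case split by bounding the denominator $r+g_\lambda\ge m\gamma_\lambda$ directly (valid since $r\ge1$ absorbs the floor), yielding the uniform base $(1-\delta'/2+o(1))^\lambda$ in one stroke.
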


\begin{proof}
If there are $r$ excess paths then for some $\ell \geq \sM^*_{\delta'}$ there are at least $t = \lfloor m \gamma_\ell \rfloor + r$ paths of length at least $\ell$ in $S$; namely, for the least $\ell$ for which there is an excess, we have $\lfloor m \gamma_\ell \rfloor + s$ paths of length $\ell$
for some $s \geq 1$, and, if $s < r$, at least $r-s$ excess paths of length greater than $\ell$.
By~\eqref{eq:PathsBound}, the probability of this event is at most
\[ \left( \frac{4e m}{m \gamma_\ell \left(2-o(1)\right)^{\ell}} \right)^{r} = \left( 1-\delta'/2+o(1)\right)^{\ell r} \leq e^{(-\delta'/2 + o(1))\ell r}\,,
\]
where the first equality absorbed the factor $c \ell^4$ into the $o(1)$-term in the base of the exponent, justified by the fact that $\ell\to \infty$.
The fact $\ell\geq \sM^*_{\delta'} = (1+o(1))\log_{2-\delta'} k$ completes the proof.
\end{proof}
Claim~\ref{clm:excess-dist} allows us to reduce to the case
\begin{equation}\label{eq-R-n/k}
 R \leq \frac{n}{k\log\log k}\,,
\end{equation}
since applying it with $r=n/(k\log\log k)$ gives $\P(R\geq r) \leq e^{-c \frac{n}{k}\frac{\log k}{\log\log k}}$, which
outweighs the factor $(\log k)^{3n/k}$ from the bound on $\sN(S)$ given by Lemma~\ref{l:better}.
%

\begin{claim}\label{clm:S-bar-abnormal}
Let $\delta>\delta'>0$, and suppose $S\in\inter_{0,m}$ satisfies the $\delta'$-normality conditions for path lengths $\ell\leq\sM^*_{\delta'}$ and
has $r \leq n/k$ excess paths.
For some $\bar{m} \sim m$ with $\bar{m}\le m-r$, let $\bar{S}$ be a uniform $\bar{m}$-subset of the $m-r$ non-excess paths in $S$. Then
\[ \P\left(\bar{S}\mbox{ is $\delta$-normal}\right) \geq 1-\exp\left(-\bar{m}k^{-o(1)}\right)\,.\]
\end{claim}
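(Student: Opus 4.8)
The plan is to mimic the proof of Claim~\ref{clm:normal-abnormal}(\ref{it-abnorm-m'-m}), replacing the hypothesis ``$S$ is $\delta$-normal'' by the weaker data we are given. It suffices to verify, for $\bar S$, the two conditions \eqref{eq-normal-short} and \eqref{eq-normal-long} of Definition~\ref{def:delta-normal} (the requirement $p_1=0$ being automatic, as every path in $\inter_{0,m}$ has at least two vertices). The key observation is this: let $S^\circ$ be the set of the $m-r$ non-excess paths of $S$, and for $\ell\ge 1$ let $d_\ell$ denote the number of $\ell$-vertex paths in $S^\circ$. Then, \emph{measured relative to $m$}, the statistics of $S^\circ$ already behave like those of a $\delta'$-normal pattern: $|d_\ell/m-2^{1-\ell}|\le\epsilon_\ell(\delta')$ for all $\ell\le\sM$, and $d_\ell/m\le\gamma_\ell(\delta')$ for \emph{every} $\ell\ge\sM$. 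The first assertion is immediate from the assumed $\delta'$-normality of $S$ for $\ell\le\sM$, since no path of length $\le\sM<\sM^*_{\delta'}$ is ever excess; the second holds by the same assumption in the range $\sM\le\ell\le\sM^*_{\delta'}$, while for $\ell>\sM^*_{\delta'}$ it is exactly what removing $\lceil(p_\ell-\gamma_\ell(\delta'))m\rceil$ excess paths guarantees (whether or not $p_\ell$ exceeded $\gamma_\ell(\delta')$). Finally, since $r\le n/k$ and $m=(\tfrac13+o(1))n$ by \eqref{eq-dnorm-sum-paths}, we have $r/m=O(1/k)=o(1)$, and since $\bar m\sim m$ the ratios $m/\bar m$, $m/(m-r)$ and $\bar m/(m-r)$ are all $1+o(1)$.

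\textbf{Long-path condition (deterministic).} For every $\ell\ge\sM$ the number of $\ell$-vertex paths in $\bar S$ is at most $d_\ell\le\gamma_\ell(\delta')m$, so its relative frequency in $\bar S$ is at most $(m/\bar m)\gamma_\ell(\delta')=(1+o(1))\gamma_\ell(\delta')$. Since $\gamma_\ell(\delta)/\gamma_\ell(\delta')=((2-\delta')/(2-\delta))^{\ell}\ge(2-\delta')/(2-\delta)=1+\tfrac{\delta-\delta'}{2-\delta}$ is bounded away from $1$, this is $<\gamma_\ell(\delta)$ for $n$ large, so \eqref{eq-normal-long} holds for \emph{every} realisation of $\bar S$.

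\textbf{Short-path condition (probabilistic).} Fix $\ell\le\sM$. The number of $\ell$-vertex paths in $\bar S$ is hypergeometric, so its relative frequency $\bar p_\ell$ has mean $d_\ell/(m-r)=(1+O(1/k))\,d_\ell/m$, whence $|\E\bar p_\ell-2^{1-\ell}|\le\epsilon_\ell(\delta')+O(2^{1-\ell}/k)$. Hoeffding's inequality for hypergeometric variables~\cite{Hoeffding}, applied with deviation $\epsilon_\ell(\delta')/\log k$, gives $\P(|\bar p_\ell-\E\bar p_\ell|>\epsilon_\ell(\delta')/\log k)\le 2\exp(-2(\epsilon_\ell(\delta')/\log k)^2\bar m)$; since $(2-\delta')^\ell=k^{o(1)}$ for $\ell\le\sM$ we have $\epsilon_\ell(\delta')=k^{-o(1)}$ uniformly in such $\ell$, so this probability is $\exp(-\bar m k^{-o(1)})$. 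On the complementary event $|\bar p_\ell-2^{1-\ell}|\le\epsilon_\ell(\delta')(1+1/\log k)+O(2^{1-\ell}/k)=(1+o(1))\epsilon_\ell(\delta')$ (the last term being $o(\epsilon_\ell(\delta'))$ as $\epsilon_\ell(\delta')=k^{-o(1)}$), which is $<\epsilon_\ell(\delta)$ by the same bounded-away-from-$1$ gap $\epsilon_\ell(\delta)/\epsilon_\ell(\delta')=((2-\delta')/(2-\delta))^{\ell}\ge 1+\tfrac{\delta-\delta'}{2-\delta}$. A union bound over the $\le\sM=k^{o(1)}$ relevant $\ell$ (the $k^{o(1)}$ factor absorbed into the exponent since $\bar m\gtrsim n$) shows $\bar S$ satisfies \eqref{eq-normal-short} except with probability $\exp(-\bar m k^{-o(1)})$; together with the deterministic long-path bound this proves the claim.

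\textbf{The main obstacle} is essentially bookkeeping: one must track the three sources of distortion --- passing from $S$ to $S^\circ$ by excess removal, from the $(m-r)$-set $S^\circ$ to the $\bar m$-set $\bar S$, and the hypergeometric fluctuation of $\bar p_\ell$ --- and check that each is dominated, at every scale $\ell$, by the multiplicative gap $(2-\delta')/(2-\delta)=1+\tfrac{\delta-\delta'}{2-\delta}>1$ between the $\delta'$- and $\delta$-thresholds $\gamma_\ell,\epsilon_\ell$. Otherwise the argument is a direct re-run of Claim~\ref{clm:normal-abnormal}(\ref{it-abnorm-m'-m}).
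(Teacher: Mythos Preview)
Your proof is correct and follows essentially the same route as the paper's. The only structural difference is packaging: the paper first argues that the set $U=S^\circ$ of non-excess paths is itself (approximately) $\delta'$-normal as a stand-alone distribution, and then invokes Claim~\ref{clm:normal-abnormal}\eqref{it-abnorm-m'-m} as a black box to pass to the random $\bar m$-subset $\bar S$; you instead inline that argument, working directly with the counts $d_\ell$ and verifying \eqref{eq-normal-short}--\eqref{eq-normal-long} for $\bar S$ from scratch. Both rest on the same two observations --- excess removal caps $d_\ell/m$ at $\gamma_\ell(\delta')$ for every $\ell\ge\sM$, and the fixed multiplicative gap $((2-\delta')/(2-\delta))^\ell\ge 1+\tfrac{\delta-\delta'}{2-\delta}$ absorbs all the $1+o(1)$ distortions from passing between $m$, $m-r$ and $\bar m$.

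One small remark: your appeal to \eqref{eq-dnorm-sum-paths} for $m=(\tfrac13+o(1))n$ is, strictly, a property of $\delta$-normal patterns, whereas here $S$ is only assumed $\delta'$-normal for $\ell\le\sM^*_{\delta'}$. The paper makes the same implicit use of $r/m=O(1/k)$; it is justified in context because the claim is only invoked inside the proof of Theorem~\ref{thm-N(S)}, where $|m/n-\tfrac13|<(\log n)^{-1/3}$ is a standing hypothesis.
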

\begin{proof}
Let $U$ be the set of non-excess paths, let $u=m-r$ denote its size and let $\mu_\ell$ denote the relative frequency of $\ell$-vertex paths in $U$. We will show that, deterministically, $U$ is $\delta'$-normal for any $\delta'>\delta$, at which point the statement of the claim will follow immediately from Part~\eqref{it-abnorm-m'-m} Claim~\ref{clm:normal-abnormal} since $\bar{m}\sim u$ (both are asymptotically $m$).

First consider the long-path condition. By our definition of excess paths, for any $\ell\geq \sM$ we have at most $m \gamma_\ell(\delta) \sim u \gamma_\ell(\delta)$ of length $\ell$. As in the proof of Claim~\ref{clm:normal-abnormal}, we have $ \gamma_\ell(\delta)/\gamma_\ell(\delta') = 
\big(1 + \frac{\delta - \delta'}{2-\delta}\big)^{\ell}$, which diverges with $\ell$ (as a poly-log even), easily implying that $\mu_\ell < \gamma_\ell(\delta')$ for large enough $n$.
To consider short paths, take $\ell \leq \sM$. By hypothesis, $\left| p_\ell - 2^{1-\ell} \right| < \epsilon_\ell(\delta')$ in $S$.
Since by definition all excess paths have length greater than $\sM^*_{\delta'}$, the set $U$ contains all the $\ell$-vertex paths in $S$ and exactly $m(1-p_\ell)-r$ others. It follows that $\mu_\ell$, the  relative  frequency of $\ell$-vertex paths in $U$, satisfies
\begin{equation*}
   \left|\mu_\ell - p_\ell\right| \leq r/m = O(1/k) = o\left(\epsilon_\ell(\delta')\right)\,,
 \end{equation*}
where the last inequality holds for $\ell\leq\sM$ since $\epsilon_\ell(\delta')\geq\epsilon_{\sM}(\delta') = k^{-o(1)}$.
The fact that $ \epsilon_\ell(\delta)/ \epsilon_\ell(\delta') = \big(1+\frac{\delta-\delta'}{2-\delta}\big)^\ell > 1$ now implies the short-path condition and completes the proof.
\end{proof}

\begin{claim}\label{clm:excess-rod}
Let $\delta'>0$, and suppose $S\in\inter_{0,m}$ satisfies the $\delta'$-normality conditions for path lengths $\ell\leq\sM^*_{\delta'}$ and has $r \leq n/(k\log\log k)$ excess paths.
Then
  \[
  \sN(S) \leq (3+o(1))^{r+1} 2^{m} m!\,.
  \]
\end{claim}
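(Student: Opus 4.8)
Fix $\epsilon>0$ as in the ambient Theorem~\ref{thm-N(S)}, let $\delta_0=\delta_0(\epsilon)$ be the constant produced by Theorem~\ref{thm:normal}, and choose $\delta'<\delta_1<\delta_0$ (shrinking the given $\delta'$ if necessary). As in the preceding reductions of \S\ref{subsec:N(S)-via-normal} we may also assume $S$ has no path of length exceeding $\log_{2-\delta'}n$. The first step is to record that the set $U$ of the $m-r$ non-excess paths of $S$ is itself $\delta_1$-normal: by definition every $\ell$-path with $\ell\geq\sM^*_{\delta'}$ that is kept in $U$ satisfies $p_\ell\leq\gamma_\ell(\delta')$, and for $\ell\leq\sM$ the relative frequencies in $U$ differ from those in $S$ by $O(r/m)=O(1/k)=o(\epsilon_\ell(\delta'))$, exactly as in the short-path computation inside the proof of Claim~\ref{clm:S-bar-abnormal}.

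The plan is then to \emph{expose first} the (at most $r$) cycles of a $\rod$ factor that meet an excess path --- call these the \emph{excess cycles} --- and to invoke Theorem~\ref{thm:normal} on what remains. Classifying each $\rod$ factor $\Phi\supseteq S$ by the set $\bar S\subseteq U$ of its paths lying outside the excess cycles, one gets
\[
\sN(S)\ \leq\ \sum_{\bar S\subseteq U} N(\bar S)\,\sN(\bar S),
\]
where $N(\bar S)$ counts the ways of partitioning the $a:=m-|\bar S|$ exposed paths (which include all $r$ excess paths) into $b\leq r$ rooted, directed $k$-cycles and of slotting those $b$ cycles into the linear order of all $n/k$ cycles (a factor $\binom{n/k}{b}$). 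For the remainder, note that $r\leq n/(k\log\log k)$ forces $|\bar S|\geq m-\tfrac12 rk\geq m/\log\log k$ once $k$ is large; organising the outer summation so that, for each value of $|\bar S|$, the leftover is a \emph{uniform} subset of $U$ of that size, Part~\eqref{it-abnorm-m'-m} of Claim~\ref{clm:normal-abnormal} (applied through the $\delta_1$-normality of $U$) shows that the leftovers that fail to be $\delta_0$-normal carry only an $e^{-mk^{-o(1)}}$ fraction of the mass; bounding their $\sN(\bar S)$ crudely by $k^{|\bar S|}|\bar S|!\,2^{|\bar S|}$ via \eqref{eq-N(S)-overestimate} (and discarding altogether the very large defects, whose probability is controlled by the tail in Claim~\ref{clm:excess-dist}), their total contribution is negligible against the target. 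For the $\delta_0$-normal leftovers, Theorem~\ref{thm:normal} gives $\sN(\bar S)\leq(3+o(1))|\bar S|!\,2^{|\bar S|}$.

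It then remains to show that forming the excess cycles is cheap, namely
\[
\sum_{\bar S} N(\bar S)\,|\bar S|!\,2^{|\bar S|}\ \leq\ (3+o(1))^{r}\,m!\,2^{m}\,,
\]
i.e.\ at most a factor $3+o(1)$ per excess cycle relative to the factorial discrepancy $\tfrac{m!\,2^m}{|\bar S|!\,2^{|\bar S|}}=(m)_a\,2^a$. The mechanism is the same one that drives Theorem~\ref{thm:goodSuffixes}, now run \emph{without} the normality hypothesis along the $\leq r$ exposed cycles: exposing the excess cycles one at a time, the number of ordered length-$a$ prefixes of paths whose partial sums hit $k,2k,\dots,bk$ and which contain all $r$ excess paths, weighted as in that proof, telescopes against $(m)_a$; the ``hit every multiple of $k$'' constraint supplies a factor $(\tfrac13+o(1))^{b}$ per run (here one may use only the crude $P_k\leq 1$, or the rough $P_k\asymp m/n$ of Theorem~\ref{thm-renewals}, since these cycles may be abnormal), which cancels the $k^{b}$ from the choice of roots, while the ratio $\tfrac{a!}{(m)_a}=\big(\tfrac{a}{em}(1+o(1))\big)^{a}$ is minuscule because $a\leq\tfrac13 rk\ll m$, and $b\leq r\leq n/(k\log\log k)$ is so small that even the $\binom{n/k}{b}$ placement factor is absorbed; this is also where $k\geq(2+\epsilon)\log_2 n$ is used.

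The main obstacle is precisely this last estimate: making the three savings --- the local-limit restriction on the filler multisets summing to $k$ in each exposed cycle, the tiny factorial ratio $a!/(m)_a$, and the smallness of $b$ against $\binom{n/k}{b}$ and $k^b$ --- quantitative \emph{simultaneously}, so that the whole sum over filler multisets, their $\rod$-arrangements into $b$ cycles, their roots, and their placements collapses with only an $O(1)^r$ overhead. The remaining pieces (normality of the large leftover, applicability of Theorem~\ref{thm:normal}, and the error bookkeeping) are routine given the claims already established in this section.
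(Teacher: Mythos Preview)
Your overall architecture matches the paper's: separate off the $\rod$ factors whose leftover (after removing the cycles containing excess paths) is $\delta$-abnormal, show via Claim~\ref{clm:S-bar-abnormal} that these contribute $o(m!2^m)$, and for the rest apply Theorem~\ref{thm:normal} to the leftover. Where your proposal breaks down is precisely the step you flag as ``the main obstacle'': the bound
\[
\sum_{\bar S} N(\bar S)\,|\bar S|!\,2^{|\bar S|}\ \leq\ (3+o(1))^{r}\,m!\,2^{m}
\]
is not actually proved. Your sketch for it is internally inconsistent --- you invoke a $(1/3+o(1))^{b}$ saving from ``hitting every multiple of $k$'' and in the same breath say ``here one may use only the crude $P_k\le 1$''. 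The latter gives no saving, and the former is unavailable exactly because the path distribution feeding the excess cycles is in general abnormal (this is the whole reason excess paths were singled out). The other ingredients you list (the ratio $a!/(m)_a$, absorbing $\binom{n/k}{b}$) do not combine into a bound of the required shape; for instance $\binom{n/k}{b}$ alone can be $(e\log\log k)^{b}$, which is not $O(1)^{r}$.

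The paper handles this step by a direct counting argument that bypasses renewal estimates entirely. One generates an \emph{arrangement}: fix an ordering of the $r$ excess paths; repeatedly pick a slot among the $n/k$ cycle positions, drop in the lowest-numbered remaining excess path, choose a root (an offset in $[k]$), and complete that cycle by choosing and directing further paths; when the excess paths are exhausted, simply order and direct all remaining paths. View this as choosing paths one at a time: at a generic step the number of choices is $m-i+1$, but at each of the $q\le r$ cycle starts one instead has at most $(n/k)\cdot k = n$ choices (slot times root) while the next path is forced. Since $m-i+1\ge m-rk=(1-o(1))m$ throughout, replacing each $(n/k-j)k$ by the default $m-i+1$ costs at most $(1+o(1))n/m=3+o(1)$ per cycle start, giving at most $(3+o(1))^{r} m!\,2^{m}$ arrangements. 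Each class of arrangements sharing the same Step~(i) output has exactly $(m-x)!2^{m-x}$ members, while the $\rod$ factors with that Step~(i) output number at most $(3+o(1))(m-x)!2^{m-x}$ by Theorem~\ref{thm:normal}; multiplying gives $(3+o(1))^{r+1}m!2^{m}$. This ``trade the forced excess path for slot-and-root, costing $n/m\approx 3$'' trick is the missing idea in your argument.
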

\begin{proof}
To bound the number of $\rod$ factors that $S$ gives rise to, we first account for the cycles containing the $r$ excess paths, and then bound the number of ways to fill in the remaining cycles and produce a $\rod$ factor.

We will use $q$ to denote the number of cycles that contain excess paths in the $\rod$ factor, and let $x$ denote the total number of paths in these $q$ cycles. Note that trivially
\[ \lceil r/k \rceil \leq q \leq r\qquad\mbox{ and }\qquad r \leq x \leq q k = o(m)\,.\]

We will begin by counting all the $\rod$ factors in which, once we filter out the above mentioned $q$ cycles containing all excess paths, the remaining $m-x$ paths form a $\delta$-abnormal distribution. For given $q$ and $x$, one first chooses these $m-x$ paths out of the $m-r$ possible non-excess ones. As $x = o(m)$, Claim~\ref{clm:S-bar-abnormal} shows that an $e^{-m k^{-o(1)}}$ fraction of these choices results in a $\delta$-abnormal path distribution. For each such choice, there are at most $2^{m-x} (m-x)! k^{n/k-q} $ ways to order and orient the paths, and then root the cycles. As for the remaining $q$ cycles, we select an excess path for each of these, order and orient the remaining $x-q$ paths, then root the cycles and insert them into the list of $n/k$ cycles with a final factor of $(n/k)_q$. The number of such $\rod$ factors (i.e., with a $\delta$-abnormal suffix) is thus at most
\begin{align}
 & e^{-m k^{-o(1)}}\binom{m-r}{x-r} \binom{r}{q} (x-q)! \left(\frac{n}k\right)_q (m-x)! k^{n/k} 2^m \label{eq-excess-abnormal-suffix}\\
 = &e^{-m k^{-o(1)}} \binom{r}q \frac{(m-r)_{x-r}}{(m)_x} \frac{(x-q)!}{(x-r)!}  \left(\frac{n}k\right)_q k^{n/k} m! 2^m \,.\nonumber
\end{align}
As $(m)_x/(m-r)_{x-r} = \left((\frac13-o(1))n\right)^r$, this expression (slightly rearranged) is at most
\[
  e^{-m k^{-o(1)}} 2^r \left(\frac{3+o(1)}k\right)^q \left(\frac{(3+o(1))x}n\right)^{r-q} k^{n/k} m! 2^m \,.\nonumber
 \]
As $2^r k^{n/k} = e^{O((n/k)\log k)}$, even after summing over $q$ and $x$ this is $o( m!2^m)$. We may thus restrict our attention to $\rod$ factors for which the cycles that do not contain excess paths induce a $\delta$-normal distribution.

To count these, we number our excess paths from 1 to $r$ (in an arbitrary way) and proceed as follows:
\begin{enumerate}[(i)]
  \item \label{it-alg-part-1} Repeat the following steps until all excess paths are exhausted:
 \begin{itemize}
  \item Select a location for a new cycle (amongst the $n/k$ slots), the lowest numbered remaining excess path to be a part of it (which we direct, as usual), and its root (given by its offset  $i\in[k]$ from the start of the chosen path).
  \item Complete the cycle that contains this path (including directions) using any choice of the remaining paths, including excess ones.
\end{itemize}
  \item \label{it-alg-part-2} Order (and direct) the remaining paths.
\end{enumerate}
This gives an arrangement consisting of \begin{inparaenum}[(i)]
  \item a set of rooted, directed cycles containing all excess paths (where each cycle contains at least one excess path), each with its position in one of $n/k$ cycle slots specified,
       and
  \item an ordering of the remaining (non-excess) paths.
\end{inparaenum}
We claim the total number of arrangements is at most
\begin{equation}
  \label{eq-algorithm-term}
(3+o(1))^r m! 2^{m}  \,.
\end{equation}
Indeed, there are $m-i+1$ choices for the $i$-th path unless we are at the beginning of a new cycle, say the $j$-th cycle. In the latter case, the next path is dictated by the ordering of the excess paths, while we have $(n/k-j)k$ possibilities for positioning and rooting the new cycle. Since the excess paths are exhausted after some $q \leq r$ cycles, at which point the number of remaining paths is at least $m - q k \geq m - r k = (1-o(1))m$, we can replace each such term $(n/k-j)k$ by the ``default'' term $m-i+1$ at a cost of $(1+o(1))m/n = 3+o(1)$. This establishes~\eqref{eq-algorithm-term}.

To complete the proof, partition the arrangements into classes according to the output of Step~\eqref{it-alg-part-1}. For any given class, if $x$ is the total number of paths in its set of excess cycles, by definition there are $(m-x)!2^{m-x}$ arrangements in the class, corresponding to Step~\eqref{it-alg-part-2}. On the other hand, we may assume the remaining $m-x$ paths have a $\delta$-normal distribution by the previous discussion. Thus, we know by Theorem~\ref{thm:normal} that the number of $\rod$ factors whose cycles with excess paths agree with this class, is at most $ (3+o(1))(m-x)!2^{m-x}$, so at most $3+o(1)$ times the number of arrangements in the class. Altogether, $\sN(S)$ is at most $3+o(1)$ times the total number of arrangements, as required.
\end{proof}

We are now in a position to complete the proof of Theorem~\ref{thm-N(S)}.
 Let $\cE=\cE(S)$ be the event that $S\in\inter_{0,m}$
satisfies the $\delta'$-normality conditions for path lengths $\ell\leq\sM^*_{\delta'}$ and has at most $ n/(k\log\log k)$ excess paths. Thus far, we have reduced the proof of the theorem to showing that
\[ \E\left[ \sN(S)^2 \one_{\cE}(S)\right] \leq (3+o(1))m!2^m\,,\]
  where $S$ is a uniformly chosen intersection pattern. Combining Claim~\ref{clm:excess-rod} with the estimate in Claim~\ref{clm:excess-dist} on the number of excess paths $R$, we get
\begin{align*}
  \E&\left[ \sN(S)^2\one_{\cE}(S)\right] = \sum_r \E\left[ \sN(S)^2\one_{\cE}(S)\mid R=r\right]\P(R=r)   \\
  &\leq (3+o(1))m!2^m \bigg(1 + \sum_{r\geq 1} \left[(3+o(1))k^{-c(\delta')}\right]^r \bigg) = (3+o(1))m!2^m
\end{align*}
(where $c(\delta')$ is the constant from Claim~\ref{clm:excess-dist} and the summand for $r=0$ is given by Theorem~\ref{thm:normal} as $S$ is then $\delta'$-normal), as required.
\qed


\subsection{Abnormal suffix: proof of Theorem~\ref{thm:normal}}\label{subsec:bad-suffix}
For an inductive proof based on placing cycles, the main problem is to bound number of ways a $\rod$ factor can be formed such that the $t$-suffix  is $\delta$-normal for each $t>i$ and yet $\delta$-abnormal for $t=i$.  These two events (normal when more than $i$ cycles remain, and abnormal when $i$ remain) each have very small probability and we cannot assume independence. Moreover, given an abnormal $i$-suffix, a complementary prefix (of total length $n-ik$) might not be $\delta'$-normal  as a stand-alone path distribution for any useful $\delta'$. These features make this argument rather twisted.

By the $\delta$-normality of $S$, the number of paths that are of length at least $\log_{2-\delta} k$
is at most $m\sum_{\ell \geq \log_{2-\delta} k}\gamma_\ell(\delta)$, which is $O(m / (k \log^4 k)) < n/(k\log k)$ for large enough $n$.
 The first step in the proof is to reduce to the case where there are no such paths whatsoever, after which the following lemma would complete the proof.
\begin{lemma}\label{lem:normal-no-long}
For any fixed $\delta>0$, if $S\in\inter_{0,m}$ is $\delta$-normal and contains no path of length at least $\log_{2-\delta}k$ then
\[ \sN(S) \leq (3+o(1))m!2^m\,.
\]
\end{lemma}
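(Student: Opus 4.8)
The plan is to derive the bound from Theorem~\ref{thm:goodSuffixes} by showing that the $\rod$ factors possessing a $\delta'$-abnormal suffix contribute only a lower-order term. Fix $\delta'>\delta$ slightly larger (small enough that Theorem~\ref{thm:goodSuffixes} still applies with parameter $\delta'$). Every $\rod$ factor counted in $\sN(S)$ either has all $n/k$ suffixes $\delta'$-normal --- in which case it is counted in $\sN_{\delta'}(S)\le(3+o(1))m!\,2^m$ --- or else there is a largest index $i$ (necessarily $i<n/k$, since $S$ itself is $\delta'$-normal) for which the $i$-suffix has a $\delta'$-abnormal path distribution while every suffix of index $>i$ is $\delta'$-normal. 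Letting $M_i$ be the number of $\rod$ factors of this second type, we have $\sN(S)\le\sN_{\delta'}(S)+\sum_{i=1}^{n/k-1}M_i$, so it suffices to prove $\sum_i M_i=o(m!\,2^m)$.

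To bound $M_i$ I would build the factor cycle by cycle exactly as in the proof of Theorem~\ref{thm:goodSuffixes}: place the first cycle (its root chosen via a size-biased sample of the paths, the rest of that cycle uniformly at random), pass to the $(n/k-1)$-suffix, and recurse. While the cycles placed so far number at most $n/k-i-1$, the set of paths that remains is a suffix of index $>i$ and is therefore $\delta'$-normal, so the renewal estimates of \S\ref{sec-renewals} apply --- with the split ``$r<k^4$'' versus ``$r\ge k^4$'' used to dispose of the $O(k^4/m^2)$ error term, exactly as in Theorem~\ref{thm:goodSuffixes} --- and these steps contribute only the same $1+o(1)$ multiplicative factors as there, with no crude $k^{n/k}$-type loss. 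When exactly $i+1$ cycles remain we are on a $\delta'$-normal path set $S''$ with $|S''|=(\tfrac13+o(1))(i+1)k$ vertices; we place one more cycle by the same rule, and what remains is the $i$-suffix, a path set $A$ of volume $ik$ that must be $\delta'$-abnormal. We arrange $A$ into its $i$ rooted, directed cycles crudely: since $S$, hence $A$, has no path of length $\ge\log_{2-\delta}k$, the overestimate behind~\eqref{eq-N(S)-overestimate} (as used in Lemma~\ref{l:better}) gives at most $(\log_{2-\delta}k)^{\,i}|A|!\,2^{|A|}=e^{O(i\log\log k)}|A|!\,2^{|A|}$ such arrangements. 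Finally, Claim~\ref{clm:normal-abnormal}(2) --- available precisely because $S$ has no long paths --- bounds the probability that $A$ is $\delta'$-abnormal by $\sqrt{ik}\,e^{-ik^{\theta_0-o(1)}}$ with $\theta_0=1-\log_{2-\delta}(2-\delta')>0$. Unfolding the recursion and combining these three ingredients gives $M_i\le(3+o(1))\,e^{O(i\log\log k)}e^{-ik^{\theta_0-o(1)}}m!\,2^m\le e^{-ik^{\theta_0-o(1)}/2}m!\,2^m$ (using $k^{\theta_0-o(1)}\gg\log\log k$ since $k\to\infty$), and summing the resulting geometric-type series over $i\ge1$ yields $\sum_i M_i\le e^{-k^{\theta_0-o(1)}/2}m!\,2^m=o(m!\,2^m)$, as required.

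The step I expect to be the main obstacle is the estimate for the probability of a $\delta'$-abnormal remainder in the final block of $i+1$ cycles. Claim~\ref{clm:normal-abnormal}(2) concerns a \emph{uniformly} random subset of $S$ of prescribed volume, whereas in the construction the remainder $A$ is produced by the biased, constrained renewal sampling (a size-biased root path for each placed cycle, the rest of each cycle chosen uniformly subject to summing to $k$). I would handle this by conditioning on the multiset of path-lengths used by the $n/k-i$ cycles already placed and on their size-biased root paths: conditionally $A$ is a uniform subset of $S$ with a prescribed length-profile, which --- for all but a negligible set of profiles, since each placed cycle contributes only $O(k)$ paths --- differs negligibly from the profile of a uniform subset of that volume, so Claim~\ref{clm:normal-abnormal}(2) applies through the slack between $\delta$ and $\delta'$; the polynomially (in $k$) many residual parameter values, and the boundedly many small values of the recursion depth, are absorbed exactly as the ``$r<k^4$'' case is in the proof of Theorem~\ref{thm:goodSuffixes}. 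The secondary point --- ruling out that $A$ is $\delta'$-abnormal merely because of its short-path frequencies with the stronger $\delta'$-exponents --- is governed by Hoeffding's inequality for hypergeometric variables, exactly as in the proofs of Claims~\ref{clm:normal-abnormal} and~\ref{clm:normal-large-subset}, and contributes a term of the same, harmless order. Throughout, the verification to keep in mind is that all the $o(1)$ error terms remain summable over both the $n/k$ recursion levels and the index $i$ --- equivalently, that the super-exponentially small probability of an abnormal transition genuinely outweighs the $e^{O(i\log\log k)}$ overcounting incurred by treating the final block of cycles crudely.
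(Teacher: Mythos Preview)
Your overall strategy --- split $\sN(S)$ into $\sN_{\delta'}(S)$ plus a sum over the first abnormal suffix, bound the former by Theorem~\ref{thm:goodSuffixes}, and show the latter is $o(m!2^m)$ --- is the same as the paper's. The gap is in how you bound $M_i$.

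You want to run the renewal recursion for the first $n/k-i$ cycles and then multiply by ``the probability that the remainder $A$ is $\delta'$-abnormal''. But the renewal recursion of Theorem~\ref{thm:goodSuffixes} bounds the \emph{total} count $\sum_A(\text{ways to reach }A)$; it does not give you a bound on the count for a \emph{specific} $A$, which is what you would need in order to sum only over abnormal $A$'s and invoke Claim~\ref{clm:normal-abnormal}\eqref{it-abnorm-log-k}. Your proposed fix --- conditioning on the length-profile of the prefix --- does not help: once that profile is fixed, the normality of $A$ is deterministic, so you are really asking for the distribution of the prefix's length-profile under the constrained, size-biased renewal sampling to be close (in a very strong sense) to that of a uniform subset of the same volume. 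That is a substantial statement in its own right, not something that follows from the slack between $\delta$ and $\delta'$, and you have not supplied an argument for it.

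What is actually needed is a bound on the number of $\rod$ factors on $S\setminus A$ that holds even though $S\setminus A$ need not be $\delta'$-normal (removing an abnormal $A$ can badly distort the distribution). The paper handles this via Lemma~\ref{l:rClose}: it introduces the notion ``$r$-close to $\delta$-normal'' for such prefixes and proves, by a \emph{downward} induction on $r$ (and decomposing by the \emph{smallest} abnormal suffix rather than the largest), that any such prefix admits at most $(m')!2^{m'}e^{rk^\theta}$ $\rod$ factors. This extra $e^{rk^\theta}$ is then beaten by the $e^{-rk^{\theta_0-o(1)}}$ from Claim~\ref{clm:normal-abnormal}\eqref{it-abnorm-log-k}. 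Your argument is missing precisely this inductive ingredient; without it the prefix bound you are implicitly using is not justified.
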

We postpone the proof of the above lemma in favor of first showing how to reduce to its setting, in a way similar to our treatment of excess paths in the proof of Claim~\ref{clm:excess-rod}.

Fix $\delta'>\delta$. Let $T$ be the subset of all paths of length at least $\log_{2-\delta}k$ in $S$, and let $t=|T| \leq n/(k\log k)$. We first consider all the $\rod$ factors containing $S$ for which there is a subset $C_1,\ldots,C_{q-1}$ of the cycles and $C_q$, which is part (possibly all) of another cycle, such that every $C_i$ contains some path in $T$ and the distribution of $S \setminus \cup_{i\leq q} C_i$ is $\delta'$-abnormal.
To estimate the number of such factors, we sum over $q \leq t = o(n/k)$ and select $q$ such paths (each for a separate cycle). Next, we sum over $x \leq k q = o(m)$, the total number of paths in these cycles, and run over all $\binom{m-q}{m-x}$ subsets for the remaining cycles.
The combination of Claim~\ref{clm:normal-large-subset} and Part~\eqref{it-abnorm-m'-m} of Claim~\ref{clm:normal-abnormal}
implies that
for any $\delta'>\delta$ an $O(\exp(-m k^{-o(1)}))$-fraction of these will result in a $\delta'$-abnormal path distribution
(the former addresses the normality of the $(m-q)$-subset and the latter treats its $(m-x)$-subsets).
Thus, similarly to Eq.~\eqref{eq-excess-abnormal-suffix}, the total number of such $\rod$ factors is at most
\begin{align}
 & e^{-m k^{-o(1)}}\binom{t}q (m-q)_{x-q} \left(\frac{n}k\right)_q (m-x)! k^{n/k} 2^m \nonumber\\
  \leq &e^{-m k^{-o(1)}} 2^t \bigg(\frac{3+o(1)}k\bigg)^q k^{n/k} m! 2^m \,,\nonumber
\end{align}
where we used the fact that $m-i = (1-o(1))m$ for $1\leq i \leq x$ to replace the term $n^q$ by $(3+o(1))^q$. Since $q\leq t$ and $(6+o(1))^t k^{n/k} = \exp(m k^{-1+o(1)})$, the entire expression is $o(m! 2^m)$, as required.

The other $\rod$ factors (in which no set $C_1,\ldots,C_q$ as above leaves behind a $\delta'$-abnormal path distribution) are handled by prioritizing the paths of length at least $\log_{2-\delta}k$, as done before with the excess paths for the proof of Claim~\ref{clm:excess-rod}. This time, however, it is crucial to estimate the probability that these align to $k$-cycles, because we cannot afford to give away any constant factor (let alone a larger term such as the $(3+o(1))^q$ in~\eqref{eq-algorithm-term}).
Using the same procedure as in that claim (order the paths in $T$ arbitrarily, repeatedly select the lowest numbered such path and complete it into a cycle, and finally order the remaining paths), we now argue that in lieu of the estimate~\eqref{eq-algorithm-term}, the total number of arrangements is at most
\[ (1+o(1))m!2^m\,.\]
To see this, as in the proof of Theorem~\ref{thm:goodSuffixes}, we appeal to one of two strategies depending on the relation between $k$ and $n$:
\begin{compactitem}[\indent$\bullet$]
  \item If $k = o(\sqrt{n})$, we will appeal to our renewal estimate after placing each of the $q$ leading paths in $C_1,\ldots,C_q$. Namely, recall that upon forming the $(j+1)$-st cycle with a leading path from $T$ --- letting $\ell_0$ denote the length of this path --- the set of remaining paths (excluding the leading path) is $\delta'$-normal by our current assumption. Implementing Theorem~\ref{thm-renewals}(a) as before, we see that the probability of hitting the partial sum $k-\ell_0$ in a random permutation over the remaining elements is $\frac{m'-1}{n'-\ell_0} + O(1/n') + O((2-\delta')^{-(k-\ell_0)})$, where $m'$ and $n'=n-j k$ are the numbers of paths and vertices left after the first $j$ cycles, respectively. Thus, the $n'=(n/k-j)k$ choices for positioning and rooting the cycle can be replaced by the ``default'' term $m'$ at a multiplicative cost of $1+O(\ell_0/n') = 1+O(k/n)$ (as the total number of vertices in these $q$ cycles is at most $q k \leq t k = o(n)$). Repeating this procedure for all $q\leq t$ cycles, then finally ordering and directing all remaining paths, bounds the number of arrangements by $\exp(O(t k/ n))m!2^m = (1+o(1))m!2^m$, as claimed.
  \item If $k \gtrsim \sqrt{n}$, we tweak the above approach by revealing the paths that follow the leading path in $C_{j+1}$ until reaching at least $y=\lfloor k-10\log_2 k\rfloor$ vertices in that cycle. Let $m_0$ and $n_0$ be the numbers of paths and vertices remaining at that point. By our assumption, these remaining paths form a $\delta'$-normal distribution; thus, the renewal estimate from Theorem~\ref{thm-renewals}(a) is $m_0/n_0 + O(1/n')$. As $n_0 = n' - y + O(\log k)$, this is $m'/n' +O(k^{3/4}/n') + O(k^{-100})$ due to the variability in $m_0$. Hence, we are again entitled to replace the term $n'=(n/k-j)k$ by $m'$, this time at a cost of $1+O(\ell_0/n')+O(k^{3/4}/n')$. Accumulating these errors over the $q\leq t$ cycles gives a factor of $\exp(O(tk / n) + k^{-1/4+o(1)})=1+o(1)$, and therefore a total of at most $(1+o(1))m!2^m$ arrangements.
\end{compactitem}
The final step is to partition the arrangements into classes according to the cycles involving $T$; if these contain a total of $x$ paths in some given class then this class contains $(m-x)!2^{m-x}$ arrangements. The path distribution on the remaining $m-x$ paths is $\delta'$-normal and contains no path of length at least $\log_{2-\delta} k$, and so Lemma~\ref{lem:normal-no-long} guarantees that the number of $\rod$ factors agreeing with this class is at most $(3+o(1))(m-x)!2^{m-x}$. Altogether, $\sN(S) \leq (3+o(1))m!2^m$ modulo Lemma~\ref{lem:normal-no-long}.

With the values of $n$ and $k$ understood, we say that a path distribution $P$ on $n-kr$ points is {\em $r$-close to $\delta$-normal} if there is some set of paths on a total of $rk$ points which, when added to $P$, gives a $\delta$-normal path distribution   with maximum length at most $\log_{2-\delta} k$ (in particular, no paths of greater length exist in $P$ itself). Such path distributions are the ones that can conceivably result when deleting (the last) $r$ cycles from the path distributions under consideration.

To complete the proof of Lemma~\ref{lem:normal-no-long} we will use the following.

\begin{lemma}\label{l:rClose}
Fix $\delta'>\delta>0$ and let $0<\theta<\theta_0$ for $\theta_0=\theta_0(\delta,\delta')$ as was given in Claim~\ref{clm:normal-abnormal}. For all $r=r(n)\geq 1$,
\begin{enumerate}[(i)]
  \item \label{it-r-close}
  If $S'$ is a set of $m'$ paths on $n-rk$ vertices that is $r$-close to $\delta$-normal, then for any sufficiently large $n$, the number of $\rod$ factors (consisting of $n/k-r$ cycles) that contain $S'$ is at most
$(m')!2^{m'}\exp\left( r k^{\theta}\right)$.
\item \label{it-bad-suffix} For any $\delta$-normal $S\in\inter_{0,m}$ with no paths of length at least $\log_{2-\delta}k$, if $n$ is large enough then the number of $\rod$ factors arising from $S$ which  have a   $\delta'$-abnormal   $r$-suffix  is at most
$ m!2^m\exp\left(- r k^{\theta}\right)$.
\end{enumerate}
\end{lemma}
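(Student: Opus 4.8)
The two parts are proved together, as the last step of the induction on the number of cycles $n/k$ running through \S\ref{subsec:bad-suffix}: in treating $n/k$ cycles we may use Theorem~\ref{thm:normal}, Lemma~\ref{lem:normal-no-long} and Lemma~\ref{l:rClose} itself for any smaller number of cycles (the base case $n/k=1$ being immediate). We establish~\eqref{it-r-close} first. Note that an $r$-close-to-$\delta$-normal $S'$ lives on $n-rk$ points, hence involves $n/k-r<n/k$ cycles, so only lower levels are invoked. Fix $\alpha\in(0,\theta)$ and split on the threshold $r_0:=\tfrac{n\log k}{k^{1+\theta}}$. If $r\ge r_0$ then $(n/k-r)\log k\le rk^\theta$, so the trivial bound~\eqref{eq-N(S)-overestimate} at scale $n-rk$ already gives at most $k^{\,n/k-r}(m')!\,2^{m'}\le (m')!\,2^{m'}\exp(rk^\theta)$ $\rod$ factors. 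If $r<r_0$, then (using $\log k=o(k^{\theta-\alpha})$) the completion witnessing $r$-closeness consists of at most $rk/2<nk^{-\alpha}$ paths, so $S'$ is obtained from a $\delta$-normal pattern by deleting at most $nk^{-\alpha}$ paths; by Claim~\ref{clm:normal-large-subset} it is itself $\delta'$-normal, and it has no path of length $\ge\log_{2-\delta}k$, hence none of length $\ge\log_{2-\delta'}k$. Applying Lemma~\ref{lem:normal-no-long} with parameter $\delta'$ at the $(n/k-r)$-cycle level (legitimate since $n-rk\to\infty$ as $r<r_0=o(n/k)$) bounds the count by $(3+o(1))(m')!\,2^{m'}\le (m')!\,2^{m'}\exp(rk^\theta)$, as $r\ge1$. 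Since $r_0\le 2nk^{-1-\alpha}$ for large $n$, these two ranges cover all $r\ge1$.

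For~\eqref{it-bad-suffix}, fix a $\delta$-normal $S\in\inter_{0,m}$ with no path of length $\ge\log_{2-\delta}k$. A $\rod$ factor of $S$ with a $\delta'$-abnormal $r$-suffix is specified by the set $W\subseteq S$ of paths in its last $r$ cycles --- which covers $\Sigma_W=rk$ vertices and is $\delta'$-abnormal --- together with a $\rod$-type arrangement of $W$ into those $r$ ordered, rooted, directed cycles and one of $S\setminus W$ into the first $n/k-r$ cycles. Extending the notation $\sN(\cdot)$ of~\eqref{eq-def-N(S)} to path multisets on fewer than $n$ points in the obvious way, the number of such $\rod$ factors is $\sum_{W}\sN(W)\,\sN(S\setminus W)$ over admissible $W$. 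Here $\sN(W)\le k^r|W|!\,2^{|W|}$ by~\eqref{eq-N(S)-overestimate}, and $\sN(S\setminus W)\le (m-|W|)!\,2^{m-|W|}\exp(rk^\theta)$ by part~\eqref{it-r-close}, since $S\setminus W$ is $r$-close to $\delta$-normal (add back $W$). Using $|W|!\,(m-|W|)!=m!\binom{m}{|W|}^{-1}$ and grouping by $\ell=|W|$, the weight $\binom{m}{\ell}^{-1}$ converts the count of admissible $W$ with $|W|=\ell$ into the probability that a uniform $\ell$-subset of $S$'s paths is $\delta'$-abnormal with total length $rk$, which is at most $\sqrt{rk}\,e^{-rk^{\theta_0-o(1)}}$ by Claim~\ref{clm:normal-abnormal}\eqref{it-abnorm-log-k}. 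As $\ell$ ranges over $O(rk)$ values, the total number of these $\rod$ factors is at most
\[
 m!\,2^m\cdot k^r(rk)^{3/2}\exp\!\big(rk^\theta-rk^{\theta_0-o(1)}\big)\,.
\]
Finally $r\log k+\tfrac32\log(rk)+2rk^\theta=o\big(rk^{\theta_0-o(1)}\big)$ for large $n$ --- the logarithmic term is handled via $rk\ge k$ (so $\tfrac{\log(rk)}{rk}\le\tfrac{\log k}{k}$) and $\theta_0>0$, and $rk^\theta\ll rk^{\theta_0-o(1)}$ since $\theta<\theta_0$ --- so the displayed quantity is at most $m!\,2^m\exp(-rk^\theta)$, which is~\eqref{it-bad-suffix}.

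The main difficulty is the interlocking of the two parts and of the ambient induction. Part~\eqref{it-bad-suffix} can only afford the relatively generous bound of part~\eqref{it-r-close} (nothing sharper is needed there), and part~\eqref{it-r-close} must be proved without circularity --- which is possible precisely because an $r$-close configuration has $n/k-r<n/k$ cycles, so that Lemma~\ref{lem:normal-no-long} is available one level down, and because the two-regime split (trivial bound for large $r$, $\delta'$-normality via Claim~\ref{clm:normal-large-subset} for small $r$) exactly exhausts the range $r\ge1$ once $\alpha<\theta$ is fixed.
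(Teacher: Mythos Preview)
Your treatment of Part~\eqref{it-bad-suffix} is essentially the paper's argument (for the regime $r<r_0$), and your unified handling of all $r\ge1$ via the inequality $r\log k+\tfrac32\log(rk)+2rk^\theta=o(rk^{\theta_0-o(1)})$ is in fact a bit cleaner than the paper's case split.

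Your Part~\eqref{it-r-close}, however, departs from the paper and contains a genuine gap. The paper proves~\eqref{it-r-close} by a \emph{self-contained downward induction on $r$}: for $r<r_0$ it introduces a third parameter $\tilde\delta>\delta'$, considers for each $\tilde r\ge1$ the $\rod$ factors whose $\tilde r$-suffix is $\tilde\delta$-abnormal but whose shorter suffixes are $\tilde\delta$-normal, bounds the $(r+\tilde r)$-close prefix via the induction hypothesis (for the larger value $r+\tilde r$), and bounds the normal tail via Theorem~\ref{thm:goodSuffixes}. Crucially, Theorem~\ref{thm:goodSuffixes} is proved independently of Lemmas~\ref{lem:normal-no-long} and~\ref{l:rClose}, so there is no circularity.

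You instead invoke Lemma~\ref{lem:normal-no-long} ``at the $(n/k-r)$-cycle level'' and resolve the evident circularity (Lemma~\ref{l:rClose} $\to$ Lemma~\ref{lem:normal-no-long} $\to$ Lemma~\ref{l:rClose}) by an outer induction on $n/k$. The problem is parameter drift. To use Lemma~\ref{lem:normal-no-long} with parameter $\delta'$ at level $n/k-r$ you need Lemma~\ref{l:rClose}\eqref{it-bad-suffix} there with parameters $(\delta',\delta'')$ for some $\delta''>\delta'$; that in turn needs~\eqref{it-r-close} with $(\delta',\delta'')$, hence Lemma~\ref{lem:normal-no-long} with parameter $\delta''$ at a yet lower level, and so on. The chain $\delta'<\delta''<\delta'''<\cdots$ has length comparable to $n/k$, forcing the successive gaps to vanish; but the ``sufficiently large $n$'' thresholds in Claims~\ref{clm:normal-abnormal} and~\ref{clm:normal-large-subset} depend on the gap $\delta'-\delta$ (e.g.\ via $\gamma_\ell(\delta')/\gamma_\ell(\delta)=(1+\tfrac{\delta'-\delta}{2-\delta'})^\ell$, which must diverge), so the induction does not close with uniform constants. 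Your framing ``the induction on the number of cycles $n/k$ running through \S\ref{subsec:bad-suffix}'' is not something the paper actually sets up, and as written it does not go through. The paper's downward induction on $r$, anchored in Theorem~\ref{thm:goodSuffixes} rather than Lemma~\ref{lem:normal-no-long}, is precisely what avoids this difficulty.
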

(For our purposes, the bound in Part~\eqref{it-bad-suffix} could be replaced by any other that would give $o(2^m m!)$ when summed over $r$.)

\begin{proof}
We first prove Part~\eqref{it-r-close} by downward induction on $r$. Let
\[ r_0 = \frac{n}{k^{1+\theta}}\log k\,. \]
First consider $r\ge r_0$.  There are $(m')!2^{m'}$ possibilities for ordering and directing the paths (ignoring the need to hit multiples of $k$). The choices of roots in the lead paths give an extra factor at most $(\log_{2-\delta} k )^{n/k}$ by the upper bound on path length. Since $r\ge r_0$, the claimed upper bound is at least $(m')!2^{m'} \exp [(n/k)\log k]$ and so Part~\eqref{it-r-close} holds in this case.

Now take $1\leq r < r_0$ while assuming that Part~\eqref{it-r-close} holds for all $r < r' \leq r_0$.
Fix $\tilde{\delta} > \delta'$ and, for $\tilde{r}\geq 1$, let $\cR_{\tilde r}=\cR_{\tilde r}(S')$ be the set of $\rod$ factors arising from $S'$ where the $\tilde{r}$-suffix is $\tilde{\delta}$-abnormal but all shorter suffixes are $\tilde\delta$-normal. (It suffices to treat this case since the number of $\rod$ factors in which all suffixes are $\tilde\delta$-normal is $\sN_{\tilde\delta}(S) \leq (3+o(1))(m')!2^{m'}$ by Theorem~\ref{thm:goodSuffixes}, a fraction of $O(\exp(-k^\theta))=o(1)$ out of the desired upper bound.)

As $S'$ is $r$-close to $\delta$-normal, it is deterministically $\delta'$-normal by Claim~\ref{clm:normal-large-subset} using $r k \leq r_0 k = n k^{-\theta+o(1)}$.
Claim~\ref{clm:normal-abnormal} therefore implies (via Part~\eqref{it-abnorm-log-k} of that claim, noting that $S$ has no paths of length at least $\log_{2-\delta'} k \geq \log_{2-\delta}k$) that if $\tilde S$ is a uniform $\tilde m$-subset of $S'$ and $\tilde\Sigma$ is its number of vertices then
\begin{equation}
  \label{eq-tilde-S-abnormal}
  \P\left(\mbox{$\tilde S$ is $\tilde \delta$-abnormal},\,\tilde \Sigma=\tilde{r}k\right) \leq \exp\left(- \tilde{r} k^{\theta_0-o(1)}\right)
\end{equation}
(where we absorbed the prefactor $\sqrt{\tilde r k}$ from that bound into the $o(1)$-term).
We will now argue that the number of $\rod$ factors in $\cR_{\tilde r}$ that have $\tilde{m}$ paths in the $\tilde{r}$-suffix and $x$ paths in the $(n/k-\tilde{r}+1)$-st cycle is at most
\begin{align}
&\bigg[\binom{m'}{\tilde m} e^{-\tilde r k^{\theta_0-o(1)}}\bigg] \bigg[(m'-\tilde m)! 2^{m'-\tilde{m}}e^{(r+\tilde r)k^\theta}\bigg]\nonumber \\
&\cdot\bigg[(\tilde m)_x 2^x \log_{2-\delta}k\bigg]\bigg[(3+o(1))(\tilde m-x)!2^{\tilde m - x}\bigg]
\,.\label{eq-R-tilde-r}
 \end{align}
The first expression in brackets corresponds to choosing $\tilde{m}$ paths for the $\tilde\delta$-abnormal $\tilde r$-suffix, as estimated above. The second expression bounds the number of ways to form a $\rod$ factor out of the remaining $m'-\tilde m$ paths (the first $n/k-\tilde r$ cycles) via the induction hypothesis (since the set of $m'-\tilde{m}$ paths in the the $n/k-\tilde r$ prefix are $(r+\tilde r)$-close to $\delta$-normal and $\tilde r \geq 1$). The third expression treats the $(n/k-\tilde r+1)$-st cycle, namely, ordering and directing its paths and selecting a root out of the first path (whose length is at most $\log_{2-\delta}k$). Finally, the last expression treats the $(\tilde r-1)$-suffix, in which all suffixes are $\tilde\delta$-normal by assumption, via an application of Theorem~\ref{thm:goodSuffixes}.

  Since $\tilde{m} \leq \tilde{r}k$ and $x\leq k$, rearranging~\eqref{eq-R-tilde-r} gives, for each $\tilde{r}\geq 1$,
\begin{align*}
   |\cR_{\tilde{r}}| &\leq \tilde{r} k^2 (m')! 2^{m'} e^{r k^{\theta} -\tilde r (k^{\theta_0-o(1)} - k^{\theta})} (3+o(1))\log_{2-\delta}k \\
 &= e^{-\tilde r k^{\theta_0 -o(1)}}(m')! 2^{m'} e^{r k^\theta}
 \end{align*}
(using $\theta_0 > \theta$), and summing this over $\tilde{r}\geq 1$ now gives $o\big((m')! 2^{m'} e^{r k^\theta}\big)$.
This establishes Part~\eqref{it-r-close}.

It remains to prove Part~\eqref{it-bad-suffix}.
Since $S$ is $\delta$-normal, similar to~\eqref{eq-tilde-S-abnormal},
the number of $m'$-subsets of $S$ which are $\delta'$-abnormal and contain exactly $r k$ vertices is at most
\begin{equation}
  \label{eq-delta'-abnormal-m'}
  \binom{m}{m'} \exp\left(-r k^{\theta_0-o(1)}\right)\,.
\end{equation}

First, take $r \geq r_0$. By ordering the $m'$ paths of the suffix as well as the remaining $m-m'$ paths, directing all $m$ paths and choosing a root for each cycle at a multiplicative cost of
$(\log_{2-\delta}k)^{n/k}$ we find that the total number $\rod$ factors arising from $S$ and having a $\delta'$-abnormal $r$-suffix is at most
\[ r k \cdot 2^m m! (\log_{2-\delta}k)^{n/k} \exp\left(-r k^{\theta_0-o(1)}\right)\,,\]
where the prefactor $rk$ bounds the number of choices for $m'$.
Since $r \geq r_0$, we have $r k^{\theta_0-o(1)} \geq n k^{-1+\theta_0-o(1)}$, which outweighs the $\exp(n k^{-1+o(1)})$
factor from rooting the cycles (as well as the factor $rk$), so the above upper bound is at most
\[ 2^m m! \exp\left(-rk^{\theta_0-o(1)}\right) < 2^m m! \exp\left(-rk^{\theta}\right)\]
for large enough $n$, as required.

When $r<r_0$, for each of the choices for a $\delta'$-abnormal $r$-suffix with $m'$ paths (as estimated in~\eqref{eq-delta'-abnormal-m'})
we order and direct the paths of the suffix, then root its cycles at a multiplicative cost of $(m')!2^{m'}(\log_{2-\delta} k)^r$.
As for the first $n/k-r$ cycles, the $m-m'$ paths used for these induce a path distribution with is $r$-close to $\delta$-normal; thus,
Part~\eqref{it-r-close} bounds the number of $\rod$ factors arising from these by $(m-m')!2^{m-m'}\exp(r k^{\theta})$.
Overall we get the upper bound
\[ r k \cdot 2^m m! \exp\left(r(k^{\theta}-k^{\theta_0-o(1)})\right) (\log_{2-\delta}k)^{r} \leq 2^m m! e^{-r k^\theta}\]
for large $n$ (where the prefactor $rk$ again accounts for the choice of $m'$). This establishes Part~\eqref{it-bad-suffix} and completes the proof of the lemma.
\end{proof}

Finally, returning to the proof
of Theorem~\ref{thm:normal}, we sum the expression from Lemma~\ref{l:rClose}\eqref{it-bad-suffix} over $r\geq 1$ and find that the contribution to $\sN(S)$
of $\rod$ factors with $\delta'$-abnormal suffixes is $o(m!2^m)$. Consequently, Theorem~\ref{thm:goodSuffixes} implies the statement of
Lemma~\ref{lem:normal-no-long}, which, as already noted, completes the proof of Theorem~\ref{thm:normal}.
\qed

\section{Second moment of cycles factors via Theorem~\ref{thm-N(S)}}
\label{sec:h=0-reduction}
Our main result in this section is the promised upper bound on $\E[\cfact^2]$ (see~ \S\ref{sec-second-moment-framework}), which is based on our estimate for the number of $\rod$ factors arising from cycle-free intersection patterns.
\begin{theorem}
  \label{thm-second-moment}
If $k \geq K_0(n)$ with $K_0(n)$ as in~\eqref{eq-K0}, then
the number of $k$-cycle factors in $G\sim\cP(n,3)$ satisfies
$\E[\cfact^2] \leq (3+o(1))\E[\cfact]^2$.
\end{theorem}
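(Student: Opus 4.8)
Since $\cfact=Y_k/\big((n/k)!\,(2k)^{n/k}\big)$ by~\eqref{eq-rod-rescale}, it is equivalent to prove $\E[Y_k^2]\le(3+o(1))(\E Y_k)^2$, and the plan is to work directly from the exact identity
\[
\E Y_k^2=\frac{6^n}{\match(3n)}\sum_{h}\sum_{m}\match(n-2m)\sum_{S\in\inter_{h,m}}\sN(S)^2
\]
of~\eqref{eq-E[Y^2]}. I will split the triple sum into three regimes: \textbf{(A)} $h=0$ and $|\tfrac{m}{n}-\tfrac13|\le(\log n)^{-1/3}$, which will supply the main term $(3+o(1))(\E Y_k)^2$; \textbf{(B)} $h=0$ with $m$ outside that window; and \textbf{(C)} $h\ge1$. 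The last two regimes will be shown to be $o\big((\E Y_k)^2\big)$.

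In regime \textbf{(A)}, the hypothesis $k\ge K_0(n)$ gives $k>(2+\epsilon)\log_2 n$ for a fixed $\epsilon>0$, so Theorem~\ref{thm-N(S)} applies uniformly in $m$ over the window: $\sum_{S\in\inter_{0,m}}\sN(S)^2\le(9+o(1))(m!\,2^m)^2|\inter_{0,m}|$. Substituting $|\inter_{0,m}|=\binom{n-m-1}{m-1}\tfrac{n!}{m!2^m}$ from~\eqref{eq:I0m} and collecting factorials, regime (A) contributes at most
\[
(9+o(1))\,\frac{6^n n!}{\match(3n)}\sum_{|m/n-1/3|\le(\log n)^{-1/3}}\match(n-2m)\,m!\,2^m\binom{n-m-1}{m-1}.
\]
It then remains to evaluate this sum by Laplace's method: expanding $\match(j)=j!/\big((j/2)!\,2^{j/2}\big)$ and applying Stirling's formula, the summand equals $n^{n/2}\exp\!\big(n\phi(m/n)+O(\log n)\big)$, where $\phi(\mu)=-\tfrac{1-2\mu}{2}\log(1-2\mu)-\tfrac{1-2\mu}{2}-\mu+\mu\log2+(1-\mu)\log(1-\mu)$ is strictly concave on $(0,\tfrac12)$ with a unique maximum at $\mu=\tfrac13$, where $\phi(\tfrac13)=\log2-\tfrac12\log3-\tfrac12$ and $\phi''(\tfrac13)=-\tfrac92$. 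Hence the sum is $(1+o(1))\tfrac23\sqrt{\pi n}\,(n/e)^{n/2}(4/3)^{n/2}$; since $\tfrac{6^n n!}{\match(3n)}=(1+o(1))\sqrt{\pi n}\,(e/n)^{n/2}(4/3)^{n/2}$ (equivalently~\eqref{eq-E[Y]}), the $(n/e)$-powers cancel and regime (A) contributes $(9+o(1))\cdot\tfrac23\cdot\pi n\,(4/3)^n=(3+o(1))\cdot 2\pi n\,(4/3)^n=(3+o(1))(\E Y_k)^2$, as claimed. The constant is exactly $9\cdot\tfrac13$: the factor $9$ of Theorem~\ref{thm-N(S)} against the $\tfrac13$ produced by the Gaussian integral.

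Regime \textbf{(B)} uses the crude bound $\sN(S)\le k^{n/k}m!\,2^m$ from~\eqref{eq-N(S)-overestimate} (Lemma~\ref{l:better} would also do), so that (B) is at most $k^{2n/k}$ times the sum in (A) taken instead over $m$ outside the window; by strict concavity of $\phi$ this tail is at most $\exp\!\big(-cn(\log n)^{-2/3}\big)$ times its value at $m=n/3$, while $k=\Omega(\log n)$ forces $k^{2n/k}=\exp\!\big(O((n/k)\log k)\big)=\exp\!\big(o(n(\log n)^{-2/3})\big)$, so (B) is $o\big((\E Y_k)^2\big)$. Regime \textbf{(C)} exploits a factorisation through the common-cycle part: if $S\in\inter_{h,m}$ has path-part $S_0$ on the remaining $n-hk$ vertices, then revealing the $h$ shared $k$-cycles (placing them among the $n/k$ ordered cycle-slots and rooting and directing each) yields $\sN(S)=(n/k)_h(2k)^h\,\sN(S_0)$, and summing over the common $h$-cycle collection,
\[
\sum_{S\in\inter_{h,m}}\sN(S)^2=\big[(n/k)_h(2k)^h\big]^2\binom{n}{hk}\frac{(hk)!}{h!\,(2k)^h}\sum_{S_0\in\inter^{(n-hk)}_{0,m}}\sN(S_0)^2,
\]
where $\inter^{(n-hk)}_{0,m}$ is the analogue of $\inter_{0,m}$ on $n-hk$ vertices. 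Bounding the inner sum again by Theorem~\ref{thm-N(S)} near $m\approx(n-hk)/3$ and by~\eqref{eq-N(S)-overestimate} elsewhere, and comparing the matchings and factorials at shifted arguments (ratios of the shape $j^{hk/2}$), the total (C)-contribution works out to be at most $\sum_{h\ge1}\big(Cn^2 2^{-k}\big)^h(\E Y_k)^2$ for an absolute constant $C$; since $k\ge K_0(n)$ gives $2^{-k}\le(2n/e)^{-\kappa_0}$ with $\kappa_0=[1-\tfrac12\log_23]^{-1}>2$, this geometric series is $O(n^{2-\kappa_0})(\E Y_k)^2=o\big((\E Y_k)^2\big)$. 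Adding the three regimes completes the proof.

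I expect the \textbf{main obstacle} to be regime (C). A shared $k$-cycle is extremely rare once $k\gtrsim\log n$, but turning this into the clean geometric bound above requires carefully balancing the four competing factors — the binomial $\binom{n}{hk}$ and the $(n/k)_h(2k)^h$ placements pushing the $h$-term up, against the two pairs of falling-matching ratios pushing it down — and checking that their product falls below a constant precisely at $k=K_0(n)$; the bookkeeping is elementary but unforgiving, and it is where an error would most easily creep in. By contrast, the Laplace evaluation in regime (A) is routine once Theorem~\ref{thm-N(S)} is in hand, and regime (B) is immediate.
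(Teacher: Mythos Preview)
Your decomposition into regimes (A), (B), (C) and your treatment of (A) and (B) are essentially the paper's own approach, and those parts are fine. The problem is the claimed geometric bound in regime~(C).

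The assertion that the total (C)-contribution is at most $\sum_{h\ge1}(Cn^2 2^{-k})^h(\E Y_k)^2$ is false for large $h$. Take the extreme case $h=n/k$, $m=0$: here $R_1$ and $R_2$ share the same underlying cycle factor, $|\inter_{n/k,0}|=n!/\big((n/k)!(2k)^{n/k}\big)$, and $\sN(S)=(n/k)!(2k)^{n/k}$ for each such $S$. Running this through~\eqref{eq-E[Y^2]} gives a contribution of exactly
\[
\frac{6^n\match(n)}{\match(3n)}\,n!\,(n/k)!(2k)^{n/k}
\;=\;\E Y_k\cdot (n/k)!(2k)^{n/k}
\;=\;\frac{(\E Y_k)^2}{\E[\cfact]}\,,
\]
using~\eqref{eq-rod-rescale} and~\eqref{eq-E[Y]}. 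At $k=K_0(n)$ one has $\E[\cfact]\sim\sqrt{k}$ by~\eqref{eq-E[F]}, so this single term already contributes $\sim(\E Y_k)^2/\sqrt{k}$ --- polynomially small in $k$, not the doubly-exponentially small $(Cn^2 2^{-k})^{n/k}(\E Y_k)^2$ your geometric bound would predict. More generally, once $\theta:=hk/n$ is bounded away from $0$, the factor $e^{-kh\log\frac{n-m-1}{n-2m}}$ in the ratio $\Psi_h(m)/\Psi_0(m)$ is no longer close to $2^{-kh}$ (for the relevant small $m$ the logarithm is near $0$), so the $(Cn^2 2^{-k})^h$ form simply does not emerge.

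The paper's Lemma~\ref{lem-Psi-bounds} handles this by splitting further: for $h\le h^*:=\lceil n/(k\sqrt{\log k})\rceil$ and $m\in\cJ$ your geometric intuition is essentially correct (this is~\eqref{eq-Psi-h-nonzero-sm}); but for $m\notin\cJ$ or $h>h^*$ one must pay the crude factor $k^{2(n/k-h)}$ and then carry out a genuinely different analysis in several sub-ranges of $\theta$ (see the proof of~\eqref{eq-Psi-m-notin-J}), with the range $\theta\ge 3/4$ handled by comparison to $\hat\Psi_{n/k}(0)$ rather than to $\Psi_0(\lfloor n/3\rfloor)$. The large-$\theta$ terms are $o\big((\E Y_k)^2\big)$ \emph{only} because $k\ge K_0(n)$ forces $\E[\cfact]\to\infty$; it is not a consequence of individual shared cycles being rare. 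Your self-diagnosis that (C) is where an error would creep in was prescient --- the ``elementary bookkeeping'' you anticipate is in fact a qualitative case analysis that your uniform geometric bound cannot replace.
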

\begin{proof}
Since $\cfact / Y_k = (n/k)!(2k)^{n/k}$ (see~\eqref{eq-rod-rescale}), we need to show that
\[ \E[Y_k^2] \leq (3+o(1))\E[Y_k]^2\,.\]
Recall also (see~\eqref{eq-E[Y^2]}) that estimating $\E[Y_k^2]$ amounts to estimating
\begin{equation}
  \label{eq-sum-h-m}
  \sum_h \sum_m \match(n-2m) \sum_{S \in \inter_{h,m}} \sN(S)^2 \,.
\end{equation}
We begin with what will turn out to be the dominant contributions to this sum, those with $h=0$ and $m$ close to $n/3$. (The analysis of this simpler case is similar to the corresponding analysis of Hamilton cycles in cubic graphs~\cite{RW4}*{Theorem 2.4} and in $r$-regular graphs~\cite{FJMRW}, where $h=0$ by definition.)
Set
\[ \mathcal{J} \deq \left\{ \lceil(\tfrac13 -\delta)n\rceil, \ldots, \lfloor(\tfrac13+\delta) n\rfloor\right\}\quad\mbox{ where }\delta = \tfrac12(\log n)^{-1/3}\,, \]
and recall that Theorem~\ref{thm-N(S)} says that if $|\frac{m}n-\frac13|<(\log n)^{-1/3}$, then
\begin{align}
\sum_{S \in \inter_{0,m}} \sN(S)^2 &\leq (9+o(1))\left(m!\,2^m \right)^2 |\inter_{0,m}|\label{eq-Im-sum-upper-bound}
\end{align}
(the slight difference between the bounds on $|\frac{m}n-\frac13|$, here and in $\cJ$, will be helpful later).

Setting
\begin{equation}
  \label{eq-def-Psi0}
  \Psi_0(m) = \match(n-2m) n!\, m!\,2^m \binom{n- m-1}{m-1}
\end{equation}
and recalling from~\eqref{eq:I0m} that $ |\inter_{0,m}| = \frac{n!}{m!\, 2^m} \binom{n-m-1}{m-1}$, we rewrite~\eqref{eq-Im-sum-upper-bound} as
\[  \match(n-2m) \sum_{S \in \inter_{0,m}} \sN(S)^2 \leq (9+o(1))  \Psi_0(m)\,. \]
Notice that for any $m$,
\begin{equation*}
  \frac{\Psi_0(m)}{\Psi_0(m+1)} = \frac{m(n-m-1)}{2(m+1)(n-2m)}\,.
\end{equation*}
If $x = 1/3 - m/n > 0$ then this is at most $ 1 - \frac{9x}{2(1+6x)}$, and so in this case
for any $m = n/3 - \gamma n$ with $0<\gamma<1/3$,
\begin{align}
\frac{\Psi_0(m)}{\Psi_0(\lfloor \tfrac{n}3 \rfloor)} &\leq
e^{-\frac92 \left(\int_{0}^{\gamma}\frac{x dx}{1+6x}\right)n} = e^{-\left[\frac{3}4\gamma - \frac{1}{8}\log(1+6\gamma)\right]n}
\leq e^{-\left(\frac{9}4\gamma^2 - 9\gamma^3\right)n}
\,.\label{eq-Psi(m)-lower}
\end{align}
Similarly, if $x = 1/3-m/n<0$ then
\[ \frac{\Psi_0(m+1)}{\Psi_0(m)} \leq (2+O(1/n))\frac{n-2m}{n-m} =(1+O(1/n))\left(1-\frac{9|x|}{2-3|x|}\right)\,,\]
whence, for $m=n/3+\gamma n$ with $0<\gamma<1/6$ (recall $m\leq n/2$),
\begin{align}
\frac{\Psi_0(m)}{\Psi_0(\lfloor \tfrac{n}3 \rfloor)} &\lesssim
e^{-9 \left(\int_{0}^{\gamma}\frac{x dx}{2-3x}\right)n} = e^{\left[3 \gamma + 2\log(1-\frac32\gamma)\right]n  }
\leq e^{-\frac{9}4\gamma^2 n }
\label{eq-Psi(m)-upper}
\end{align}
(where the multiplicative constant implicit in the first inequality accounts for accumulating the $1+O(1/n)$ error factor over $O(n)$ values of $m$).
Finally, in both cases, when $x=o(1)$ the upper bounds are essentially tight (using $1-x \geq \exp[-x/(1-x)]$, for instance); namely,
\begin{align}
\frac{\Psi_0(m)}{\Psi_0(\lfloor \tfrac{n}3 \rfloor)} &= e^{-(\frac{9}4+o(1))\gamma^2 n }\qquad\mbox{ for $\gamma=o(1)$}\,.
\label{eq-Psi(m)-middle}
\end{align}
From this last estimate we have
\begin{align}
\sum_{m\in\mathcal{J}} \Psi_0(m) &\sim \bigg(\int_{-\infty}^{\infty} e^{-(\frac94-o(1)) x^2 / n} dx \bigg) \Psi_0(\lfloor\tfrac{n}3\rfloor) 
\sim \frac23 \sqrt{\pi n}\Psi_0(\lfloor\tfrac{n}3\rfloor)\,.
\label{eq-middle-sum}
 \end{align}

We next want to show that the remaining terms in~\eqref{eq-E[Y^2]} (those with $m\notin \cJ$ or $h\neq 0$) are negligible in comparison
with those in~\eqref{eq-middle-sum}.

For $h\neq 0$, we may choose $S\in\inter_{h,m}$ by first choosing the $h$ cycles --- this can be done in $\frac{(n)_{kh}}{h! (2k)^h}$ ways ---
and then choosing the set, say $S'$, of paths of $S$ from the remaining $n'=n-kh$ vertices. We then have
\[ \sN(S) = (n/k)_h (2k)^h \sN(S')\,, \]
where the first two factors count the number of ways to order (in the $\rod$ factor), root and direct the cycles.
Therefore, with $\inter_{0,m}^{h}$ the number of cycle-free, $m$-path intersection patterns on $n-kh$ vertices, we have
\begin{equation}
  \label{eq-sum-inter-0m-h}
  \sum_{S\in\inter_{h,m}} \sN(S)^2 = \frac{(n)_{kh}}{h! (2k)^h} \left( (n/k)_h (2k)^h\right)^2 \sum_{S'\in\inter_{0,m}^h} \sN(S')^2\,.
\end{equation}
The natural benchmark for $\match(n-2m) \sum_{S'\in\inter_{0,m}^h}\sN(S')^2$, analogous to $\Psi_0(m)$ (see~\eqref{eq-def-Psi0}), is
\[ \match(n-2m)  (n-kh)! m! 2^m \binom{n-kh-m-1}{m-1}\,.\]
Combining the last two displays, we find that the counterpart of $\Psi_0(m)$ in consideration of the contribution of $\inter_{h,m}$ is
\begin{align}
  \Psi_h(m) &\deq \match(n-2m) n!\, m!\,2^m \binom{n-k h - m-1}{m-1}\binom{n/k}{h}^2 (2k)^h h!\label{eq-Psi-def}\,,
\end{align}
though here we will sometimes need to retreat to
\begin{align}
    \hat{\Psi}_h(m) &\deq \Psi_h(m) k^{2(n/k - h)
    }\,.\label{eq-Psibar-def}
\end{align}
(It may be worth observing, though we do not make explicit use of this, that the preceding description of the number of choices for the cycles
of $S\in\inter_{h,m}$, combined with~\eqref{eq:I0m}, gives $|\inter_{h,m}| = \frac{1}{h! (2k)^h}\frac{n!}{m!\, 2^m} \binom{n-kh-m-1}{m-1}$.)

As we will soon see, the main task remaining is to control $\sum_{h>0}\!\sum_{m}\! \Psi_{h}(m)$. This is achieved by the following lemma.
\begin{lemma}
  \label{lem-Psi-bounds}
For $\Psi_h(m),\hat{\Psi}_h(m)$ as in~\eqref{eq-Psi-def} and $h^* = \lceil n/(k\sqrt{\log k})\rceil$,
\begin{align}
  \sum_{h=1}^{h^*}\sum_{m\in \cJ} \Psi_h(m) &= o\bigg(\sum_{m\in\cJ}\Psi_0(m)\bigg)\,,\label{eq-Psi-h-nonzero-sm}\\
    \sum_{h = h^*}^{n/k}\sum_{m\in \cJ} \hat{\Psi}_h(m)  &= o\bigg(\sum_{m\in\cJ}\Psi_0(m)\bigg)\,,\label{eq-Psi-h-nonzero-lg}\\
  \sum_{h=0}^{n/k}\sum_{m\notin \cJ} \hat{\Psi}_h(m)  &= o\bigg(\sum_{m\in\cJ}\Psi_0(m)\bigg) \,.\label{eq-Psi-m-notin-J}
\end{align}
\end{lemma}
The reason for the division at $h^*$ is that for $h< h^*$ we will be able to apply Theorem~\ref{thm-N(S)} (with $n-kh$ vertices)
to say that the inner sum on the left of~\eqref{eq-Psi-h-nonzero-sm} bounds, within a constant factor, the corresponding inner sum over $m\in\cJ$ in~\eqref{eq-sum-h-m}.  This is in contrast to the remaining terms (those in~\eqref{eq-Psi-h-nonzero-lg} and~\eqref{eq-Psi-m-notin-J}),
for which we must pay the extra factor $k^{2(n/k-h)}$ appearing in~\eqref{eq-Psibar-def}.

Before proving Lemma~\ref{lem-Psi-bounds}, we show that it gives
our earlier assertion that the terms with $h=0$ and $m\in\cJ$ dominate the sum in~\eqref{eq-sum-h-m};
that is,
\begin{equation}
  \label{eq-sum-Psi-bound}
  \sum_h\sum_{m}\match(n-2m)\sum_{S\in\inter_{h,m}}\sN(S)^2 \leq (1+o(1))\sum_{m\in\cJ}\Psi_0(m)\,.
\end{equation}
For $m\in\cJ$ and $1 \leq h \leq h^*$, setting $n'=n-kh$, we have
\[ \left|\frac{m}{n'} - \frac13\right| \leq \delta + O(kh / n) = \delta + O\left(1/\sqrt{\log n}\right) < (\log n')^{-1/3}\]
for large enough $n$. Theorem~\ref{thm-N(S)} thus says that $\sum_{S' \in \inter^h_{0,m}} \sN(S')^2 $ is at most $(9+o(1))\left(m! 2^m\right)^2$.
Hence, using~\eqref{eq-sum-inter-0m-h} (and slightly simplifying) we have
\begin{align*}
\sum_{h=1}^{h^*}\sum_{m\in\cJ}\match(n-2m)\sum_{S\in\inter_{h,m}}\sN(S)^2 
&\lesssim
\sum_{h=1}^{h^*} \sum_{m\in\cJ} \Psi_h(m)\,,
\end{align*}
which is $o\left(\sum_{m\in\cJ}\Psi_0(m)\right)$ by~\eqref{eq-Psi-h-nonzero-sm}.
For the terms in~\eqref{eq-sum-Psi-bound} corresponding to $h>h^*$ or $m\notin\cJ$, we can use the trivial bound $\sN(S')\leq k^{n'/k} m! 2^m$,
whence~\eqref{eq-Psi-h-nonzero-lg} and~\eqref{eq-Psi-m-notin-J} imply a total contribution of $o\left(\sum_{m\in\cJ}\Psi_0(m)\right)$.
So, Lemma~\ref{lem-Psi-bounds} indeed allows us to estimate~\eqref{eq-sum-h-m}.
\begin{proof}[\textbf{\emph{Proof of Lemma~\ref{lem-Psi-bounds}}}]
For any $m$ and $h$, we have
\begin{align}
\frac{\Psi_{h}(m)}{\Psi_{0}(m)}  &= 
\binom{n/k}h^2 (2k)^h h! \prod_{i=1}^{m-1}\left(1-\frac{kh}{n-m-i}\right) \nonumber\\
&\leq \frac{(2n^2/k)^{h}}{h!}e^{-kh\log \frac{n-m-1}{n-2m}}
\leq \left(2e\frac{n^2}{kh}e^{-k\log \frac{n-m-1}{n-2m}}\right)^h
\label{eq-Psi-h2-Psi-h1-ratio}\,.
\end{align}
The two cases with $m\in\cJ$ (\eqref{eq-Psi-h-nonzero-sm} and~\eqref{eq-Psi-h-nonzero-lg}) are easy. Here we observe that, even with our lower bound
on $k$ relaxed to $k>(2+\epsilon)\log_2 n$ for some $\epsilon>0$, the right-hand side of~\eqref{eq-Psi-h2-Psi-h1-ratio} is at most
\[ \left[2e \frac{n^2}{kh} (2-o(1))^{-k} \right]^h < n^{-\left(\epsilon-o(1)\right)h}\,. \]
This gives~\eqref{eq-Psi-h-nonzero-sm} (actually the finer $\sum_{h=1}^{h^*}\Psi_h(m) = o(\Psi_0(m))$ for $m\in\cJ$).
Similarly,~\eqref{eq-Psi-h-nonzero-lg} follows once we observe that when $h>h^*$ and $m\in\cJ$ one has
$kh \log\frac{n-m-1}{n-2m} \gtrsim n/\sqrt{\log n}$. Thus, the term $\exp(-kh\log\frac{n-m-1}{n-2m})$ in~\eqref{eq-Psi-h2-Psi-h1-ratio} easily
overpowers the extra $k^{2(n/k-h)}$ in $\hat{\Psi}_h(m)$.

For the more interesting~\eqref{eq-Psi-m-notin-J}, we write $m=n/3+\gamma n$ and proceed as follows.
First suppose $m \geq \frac2{9} n$.
When estimating $\Psi_h(m)/\Psi_0(m)$ via~\eqref{eq-Psi-h2-Psi-h1-ratio},
the two opposing factors in that bound are 
$\exp\big[-k h \log\frac{2-3\gamma-O(1/n)}{1-6\gamma}\big]$ vs.\
$\exp\big[h(2\log n - \log k + O(1))\big]$;
thus, if
\begin{equation}\label{eq-down-deviation-gamma}
k \log\frac{2-3\gamma-O(1/n)}{1-6\gamma} \geq (2+\epsilon) \log n
\end{equation}
for some fixed $\epsilon>0$ then $\Psi_h(m)/\Psi_0(m) \leq \exp[-O(n^{-\epsilon})]$.
Since $k \geq K_0(n)= 2\log_{4/3}(2n/e)$, it follows that
\[ \exp\left(2 \frac{\log n}k \right) \leq \exp\left(2 \frac{\log n}{K_0(n)} \right)= \frac43 -o(1)\,,\]
and so~\eqref{eq-down-deviation-gamma} easily holds as long as $\frac{2-\gamma}{1-6\gamma} \geq \frac43 +\epsilon'$
for some $\epsilon'>0$, and in particular it holds for any $\gamma \geq  -\frac19$ (with room to spare). Overall we have
\[ \sum_{h=1}^{n/k}\sum_{\substack{m \geq \frac29n \\ m\notin\cJ}} \hat{\Psi}_h(m) = o\bigg(\sum_{\substack{m \geq \frac29n \\ m\notin\cJ}}\hat{\Psi}_0(m)\bigg) =
o\bigg(\sum_{m\in\cJ}\Psi_0(m)\bigg)\,,\]
with the last equality using~\eqref{eq-Psi(m)-lower}--\eqref{eq-Psi(m)-upper},
in which the deviation of $\delta n$ in $m$, compared to the interval $\cJ$, translates into a bound of $\exp(-c n \log^{-2/3} n)$ for an absolute $c>0$ and overtakes the factor
$k^{2n/k}=\exp[O(n\frac{\log k}k)]$.

It remains to establish~\eqref{eq-Psi-m-notin-J} when $m < \frac29 n$.
For such values of $m$ we get from~\eqref{eq-Psi(m)-lower} that, for some absolute constant $c_0>0$,
\begin{equation}\label{eq-side-deviation}
\Psi_0(m)/\Psi_0(\lfloor\tfrac{n}3\rfloor) \leq
e^{-c_0 n}\,.
\end{equation}
Observe now that if $h < n/\log^2 n$ then $n^{2h} = \exp(O(n/\log n)) = \exp(o(n))$, whereas if $h\geq n/\log^2 n$,
\[ \frac{(2n^2/k)^h}{h!} = \left[(2e+o(1))\frac{n^2}{hk}\right]^h \leq e^{(1+o(1))h\log n} \,.\]
Immediately we see that if $k > (2/c_0) \log n$ (say) then the last expression is at most
$\exp[(c_0/2+o(1))n]$, outweighed by
the factor $\exp(-c_0 n)$ from~\eqref{eq-side-deviation};
Similarly, if $hk/n \leq 5c_0$ then $\frac{h\log n}n \leq \frac{hk \log n}{n K_0(n)} < \frac56c_0$
(using $K_0(n)\geq 6\log n$),
again outweighed by the aforementioned factor $\exp(-c_0n)$.
Setting
\[ \alpha := m/n\,,\quad \theta = h k / n\,,\]
it therefore remains to show~\eqref{eq-Psi-m-notin-J} when
\begin{equation}
  \label{eq-last-regime}
  \theta> c> 0\,,\quad \frac{2(1-\theta)}k\leq \alpha\leq \frac{1-\theta}2\;\wedge\;\frac29\,,\quad k \leq c' \log n
\end{equation}
for some absolute constants $c,c'>0$ (the upper bound on $\alpha$ used that every path has length at least 2,
while the lower bound on $\alpha$ used that each of the $(1-\theta)n/k$ cycles unaccounted for by $h$
 must contain at least 2 paths).

We next treat the range $c < \theta \leq 3/4$. With the above notation for $m$,
\begin{align}
&\Psi_0(m) = \match(n-2m)  m!2^m  \binom{n-m-1}{m-1} \nonumber\\
&\asymp \left(\frac{(1-2\alpha)n}e\right)^{(1-2\alpha)n/2}  \sqrt{\alpha n} \left(\frac{\alpha n}e\right)^{\alpha n} 2^{\alpha n}
 O\Big(\frac{1}{\sqrt{\alpha n}}\Big)e^{(1-\alpha)H_e\left(\frac{\alpha}{1-\alpha}\right)n}
\nonumber\\
&\asymp (n/e)^{n/2} \, e^{\left[ (1-\alpha)H_e\left(\frac{\alpha}{1-\alpha}\right)+\frac{1-2\alpha}2\log(1-2\alpha) + \alpha\log(2\alpha) \right] n}\,,
\nonumber
\end{align}
where $H_e(x)=-x\log x - (1-x)\log(1-x)$ is the natural entropy function, and using the fact $\binom{y}{\alpha y} \asymp (\alpha y)^{-1/2}
\exp(H_e(\alpha)y)$, valid for any $\alpha\in(0,1)$ and $y$. Letting
\begin{align}
  \label{eq-g-def}
  g(\alpha) := (1-\alpha)H_e\left(\frac{\alpha}{1-\alpha}\right)-\frac12 H_e(2\alpha)\,,
\end{align}
we see that
\begin{align}\label{eq-psi0m-g(a)}
\Psi_0(m) &\asymp (n/e)^{n/2} \exp\left[ g(\alpha) n\right]\,.
\end{align}
Combining this with~\eqref{eq-Psi-h2-Psi-h1-ratio} (using $h k \log(\frac{n-m-1}{n-2m}) = hk\log(\frac{n-m}{n-2m})+O(1)$) gives
\begin{align}
  \frac{\hat{\Psi}_h(m)}{\Psi_0(\lfloor\tfrac{n}3\rfloor)} &=
  \frac{\hat{\Psi}_h(m)}{\Psi_0(m)}\frac{\Psi_0(m)}{\Psi_0(\lfloor\tfrac{n}3\rfloor)}\nonumber\\
  &\lesssim k^{2n/k}\exp\left[(\theta n/k) (\log n + O(1)) - \theta n \log\frac{1-\alpha}{1-2\alpha} \right]\nonumber\\
  & \qquad\quad\cdot\exp\Big[\left(g(\alpha)-g(1/3)\right)n\Big]\,.\label{eq-Psih-Psi0(n/3)-ratio}
\end{align}
If $\log\frac{1-\alpha}{1-2\alpha} \geq \frac{\log n}k$ then the first exponent in the final expression is at most $\exp(o(n))$ and the entire
expression is less than $\exp(-c n)$ for some fixed $c>0$ thanks to the term $\exp[(g(\alpha)-g(1/3))n]$ (as $\alpha \leq \frac29$ and $g'(\alpha) = \log\frac{2-4\alpha}{1-\alpha}$, so $g$ is increasing for $\alpha\leq\frac13$).
Assume therefore that
\[ \log\frac{1-\alpha}{1-2\alpha} < \frac{\log n}k\,.\]
Then the right-hand side of~\eqref{eq-Psih-Psi0(n/3)-ratio} is increasing in $\theta$
and decreasing in $k$, so we can take $\theta = \frac34$ (its maximum in our present regime) and $k=K_0(n)$ to get
\begin{align}\label{eq-Psih-Psi0(n/3)-ratio-2}
  \frac{\hat{\Psi}_h(m)}{\Psi_0(\lfloor\tfrac{n}3\rfloor)}
\lesssim k^{2n/k}e^{\left[\frac34 \frac{\log n + O(1)}{K_0(n)} - \frac34 \log\frac{1-\alpha}{1-2\alpha} + g(\alpha)-g(\frac13) \right]n}\,.
\end{align}
Since $g(1/3) = \frac12\log(4/3)$ and $K_0(n)=2\log_{4/3}(2n/e)$, clearly
\begin{equation}
  \label{eq-logn-K0-g(1/3)}
  \frac{\log n}{K_0(n)} = g(1/3) + O(1/\log n)\,,
\end{equation}
and we further claim that
\begin{equation}
  \label{eq-g(a)-vs-log}
  \beta\log\frac{1-\alpha}{1-2\alpha} > g(\alpha)\qquad\mbox{ for any $\alpha\in(0,\frac12)$ and $\beta > \log 2$}\,.
\end{equation}
Indeed, letting $f(\alpha)$ denote the left-hand side, we have $f(0)=g(0)=0$, so~\eqref{eq-g(a)-vs-log} will follow from showing that $f'(\alpha)> g'(\alpha)$ for $\alpha\in(0,1/2)$. Along this interval $f'(\alpha)=\beta/(1-3a+2a^2)$ is increasing,
while $g'(\alpha)=\log(\frac{2-4\alpha}{1-\alpha})$ is decreasing,
and using $f'(0)=\beta > \log 2 = g'(0)$ this implies~\eqref{eq-g(a)-vs-log}.

Plugging~\eqref{eq-logn-K0-g(1/3)} and~\eqref{eq-g(a)-vs-log} (with $\beta=3/4$) in~\eqref{eq-Psih-Psi0(n/3)-ratio-2} yields
\[   \frac{\hat{\Psi}_h(m)}{\Psi_0(\lfloor\tfrac{n}3\rfloor)}
\lesssim k^{2n/k}e^{- (\frac14-o(1)) g(\frac13) n}\,,
\]
which clearly suffices to show that
\[ \sum_{h\leq \frac34 n/k}\sum_{m\notin\cJ} \hat{\Psi}_h(m) = o\bigg(\sum_{m\in\cJ}\Psi_0(m)\bigg)\,.\]

We proceed to the case $\frac34 \leq \theta \leq 1-k/n$.
Here we will compare $\Psi_h(m)$ to $\Psi_{n/k}(0)$ (rather than $\Psi_{0}(\lfloor\tfrac{n}3\rfloor)$),
and later show that $\Psi_{n/k}(0)$ is small.
Observe that
\begin{align*}
\frac{\hat{\Psi}_h(m)}{\hat{\Psi}_{n/k}(0)} &= \frac{\match(n-2m)}{\match(n)} m! 2^m \binom{n-kh-m-1}{m-1}
\frac{\binom{n/k}{h}^2 h! (2k)^{2n/k-h}}{(n/k)! (2k)^{n/k}}  \\
&\lesssim \left(\frac{n-2m}{e}\right)^{\frac{n-2m}2}\left(\frac{n}e\right)^{-\frac{n}2} 2^m (n-kh-m)^m \frac{\binom{n/k}{h} }{(n/k-h)! } (2k)^{n/k-h}\,,
\end{align*}
using $m \leq n-kh-m$. Writing $\binom{n/k}h \leq (\frac{en}{k(n/k-h)})^{n/k-h}$, we find the last expression to be at most
\begin{align*}
& O(1)\left(1-\frac{2m}n\right)^{n/2}\left(\frac{2e(n-kh-m)}{n-2m}\right)^m \frac{\big[\frac{e}{1-\theta}\big]^{(1-\theta) \frac{n}k}
\big[\frac{2ek^2}{(1-\theta)n}\big]^{(1-\theta)\frac{n}k} }
{\sqrt{(1-\theta)\frac{n}k} }  \\
&\qquad\qquad \leq  \left(\frac{2(1-\theta-\alpha)}{1-2\alpha}\right)^{\alpha n} \left[\frac{2e^2k^2}{(1-\theta)^2n}\right]^{(1-\theta)\frac{n}k}
 \,,
\end{align*}
using $\sqrt{(1-\theta)n/k} \geq 1$ since $\theta \leq 1- k/n$.
For any $\theta >1/2$ we know that $\frac{2(1-\theta-\alpha)}{1-2\alpha}$ is at most $2(1-\theta)$. Recalling from~\eqref{eq-last-regime} that
$\alpha n \geq 2(1-\theta)n/k$, we can infer that the right-hand side of the last inequality
is at most
\[  \left(\frac{8e^2k^2}{n}\right)^{(1-\theta)\frac{n}k}
\,.
\]
Since $\theta = ik/n$ for some integer $1\leq i \leq \frac14n/k$, in which case $m \leq ik/2$,
summing over the last expression over $m$ and $\theta$ amounts to
\[ \frac12 \sum_{i=1}^{\frac14n/k} ik\left(\frac{8e^2 k^2}n\right)^i = O(k^3/n)\]
(here we used the fact $k = O(\log n)$ from~\eqref{eq-last-regime}).

To complete the proof, we analyze $h=n/k$. By~\eqref{eq-g-def}--\eqref{eq-psi0m-g(a)}, together with the fact that $g(1/3)=\frac12\log(4/3)$,
\begin{align*}
 \frac{\hat{\Psi}_{n/k}(0)}
{\Psi_0(\lfloor\tfrac{n}3\rfloor)} &\asymp \frac{(n/e)^{n/2} (n/k)! (2k)^{n/k}}{(n/e)^{n/2}\exp[g(\frac13)n] }
\asymp \sqrt{n/k}\left(\frac{2n}{e}\right)^{\frac{n}k} \left(\frac{3}4\right)^{\frac{n}2} \leq \sqrt{n/k}\,,
\end{align*}
using the fact $(2n/e)^{n/k} \leq (4/3)^{n/2}$ since $k \geq K_0(n)=2\log_{4/3}(2n/e)$.
Recalling from~\eqref{eq-middle-sum} that $\sum_{m\in\cJ} \Psi_0(m) \asymp \sqrt{n}\Psi_0(\lfloor\tfrac{n}3\rfloor)$ now gives
\[ \sum_{h\geq \frac34 n/k}\sum_{m\notin\cJ} \hat{\Psi}_h(m) = O(1/\sqrt{k}) \sum_{m\in\cJ}\Psi_0(m)\,.\]
This establishes~\eqref{eq-Psi-m-notin-J} and completes the proof of the lemma.
\end{proof}

To complete the proof of the theorem, we revisit the definition of $\Psi_0$ in~\eqref{eq-Psi-def}, and see that $\Psi_0(\lfloor\tfrac{n}3\rfloor)$ can readily be estimated as
\begin{align*} \Psi_0( \tfrac{n}3) &\sim \sqrt{2\pi n}(n/e)^n (n/3) 2^{n/6}
\frac{\sqrt{\frac43\pi n}\left(\frac23n -1\right)^{\frac23 n -1}}
{\sqrt{\frac13\pi n}\left(n/6\right)^{n/6}}e^{1-n/2} \\
&\sim \sqrt{2\pi n} (n/e)^{3n/2} \left[ (12)^{\frac16} (\tfrac23)^{\frac23} \right]^{n}
= \sqrt{2\pi n} (n/e)^{3n/2} (4/3)^{n/2}\,,
\end{align*}
where we used the fact that $(\frac23 n-1)^{\frac23 n-1} \sim (\frac23 e n)^{-1}(\frac23 n)^{\frac23 n}$.
Combining Lemma~\ref{lem-Psi-bounds} with~\eqref{eq-middle-sum} yields
\[ \sum_h \sum_m \match(n-2m) \sum_{S \in \inter_{h,m}} \sN(S)^2 \leq \left(6\pi \sqrt{2} + o(1)\right) n \Big(\frac{n}e\Big)^{3n/2}  \Big(\frac43\Big)^{n/2}\,,\]
and altogether we have
\begin{align*}
\E[Y_k^2] &= \frac{6^n}{\match(3n)} \sum_{m} \match(n-2m) \sum_{S \in \inter_{h,m}} \sN(S)^2\\
&\leq 6\pi\sqrt{2} \frac{6^n}{\sqrt{2}(\frac{3n}e)^{\frac{3n}2}} n \Big(\frac{n}e\Big)^{3n/2} \Big(\frac43\Big)^{n/2} = 6 \pi n \frac{6^n}{3^{3n/2}} \Big(\frac43\Big)^{n/2} = 6\pi n \Big(\frac43\Big)^{n} \,.
\end{align*}
By~\eqref{eq-E[Y]} we have thus arrived at the promised
\[ \E[Y_k^2] = (3+o(1))(\E Y_k)^2\,,\]
as required.
\end{proof}

\section{Small subgraph conditioning and contiguity}\label{sec-small-subgraph}
In this section we derive Theorem~\ref{mainthm-cycle-factor} and Corollary~\ref{maincoro-contig}
from Theorem~\ref{thm-second-moment}, which was the upshot of the second moment analysis of the previous sections. For this we use what is called the small subgraph conditioning method in~\cite{Wormald}. This calls for estimating the joint moments $\E\big[Y_k\prod_{i=1}^j[X_i]_{r_i}\big]$ where $X_i$ is the number of $i$-cycles in a pairing in $\cP_{n,3}$ (and $[X]_{r}=X!/(X-r)!$) for each fixed vector $(r_1,\ldots, r_j)$. The computation here is almost the same as the corresponding original computation, the first time small subgraph conditioning was used, when the random variable of concern was the number of Hamilton cycles, and this is presented for the configuration model in~\cite{Wormald}*{Proof of Theorem 4.5}. The argument is short so we include a complete version here.

We first show that for any  fixed  $i\ge 1$
\begin{equation} \label{onemoment}
\frac{\E[Y_kX_i]}{\E[Y_k]}\to \lambda_i(1+\delta_i)
\end{equation}
where
\[
\lambda_i =\frac{2^i}{2i}\quad \delta_i = \frac{(-1)^i -1}{2^i}.
\]
The fact that $\lambda_i = \E[X_i]$ was one of the first results
Let $D$ be some fixed set of pairs forming
a $\rod$ cycle factor in pairings in $\cP_{n,3}$. By symmetry all copies
of $D$ are
equivalent and  so
\[
 \E [Y_kX_i]/\E[ Y_k] = \E\left[X_i\mid D\subseteq \cP_{n,3}\right]\,.
 \]

If $C$ is the set of pairs corresponding to an $i$-cycle (in which
case we also call $C$ itself an $i$-cycle), since $k\to\infty$ and $i$ is fixed, we can assume $ D\cap C$ forms a  configuration of paths. We will classify
$C$ according to this configuration.  Give these
paths a consistent orientation along $C$  and distinguish one path as first. This induces a linear ordering of paths around $C$, and considerting the overcounting we just introduced, it is now clear that
\begin{equation} \label{Qsum}
\frac{\E[Y_kX_i]}{\E[Y_k]} =
\sum_{Q}\frac{1}{2|Q|}\E[X_i(Q)\mid D\subseteq \cP_{n,3}]
\end{equation}
where $Q$ denotes the sequence of lengths
of paths, $|Q|$ is the number of paths in $Q$ and $X_i(Q)$ is the number of
$i$-cycles in $\cP_{n,3}$ consistent with such
a configuration $Q$. Fix on such a $Q$ with $|Q|=j$. There are
asymptotically
$n^j$ ways to choose the starting
points of the paths on $D$ together with their directions along $D$.
Almost all choices of such starting points are such that each two starting points are at distance greater than $i$ along the cycles in $D$. Furthermore, once they
are chosen, the pairs in $C$ are determined if it creates an $i$-cycle yielding
$Q$. The probability that these pairs all occur in
$\cP_{n,3}$  conditional upon $D\subseteq P$ is asymptotically $n^{-j}$. Hence
$
\E[X_i(Q)\mid D\subseteq P] \to 2^{|Q|}.
$
Now~\eqref{Qsum} becomes
\begin{equation} \label{Qsum2}
\frac{\E[Y_kX_i]}{\E[Y_k]} \to
\sum_{j\ge 1}\frac{2^j}{2j}\ |\,\{Q\,\colon\, |Q|=j\}\,|.
\end{equation}
Note that every $Q$ must have $i$ vertices in total.

The ordinary generating function for the number of configurations $Q$
with $x$ marking the total number of vertices involved and $y$ marking
the number of paths is $\frac{g(x,y)}{1-g(x,y)}$ where $g(x,y)$ is the
generating function for one path; that is, $ yx^2/(1-x)$. Thus,
with square brackets denoting extraction of coefficients,
\[
\frac{\E[Y_kX_i]}{\E[Y_k]} \to
\sum_{j\ge 1}\frac{2^j}{2j}[x^iy^j] \frac{yx^2}{1-x-yx^2}
\]
and standard generating function manipulations (shown in detail in~\cite{Wormald})
now give~\eqref{onemoment}. This argument is easily generalised to give
\[
\E\big[Y_k\prod_{i=1}^j[X_i]_{r_i}\big]\to \prod_{i=1}^j\lambda_i^{r_i}(1+\delta_i)^{r_i} \,.
\]

By definition of $\lambda_i$ and $\delta_i$ we have  $\sum_{i\ge
1}\lambda_i\delta_i^2 = \log 3$. Since we showed in Theorem~\ref{thm-second-moment} that
$\E Y_k^2 \leq (3+o(1))(\E Y_k)^2$,
all the requirements of small subgraph conditioning are satisfied
(see~\cite{Wormald}*{Theorems~4.1 and 4.3} or~\cite{JLR}*{Theorem 9.12 and Remark 9.16}) provided that $\E[Y_k]\to\infty$.
To state the conclusion of this, we first note that $\delta_i=-1$ iff $i=1$. Several things may now be concluded when $\E Y_k \to\infty$. One is that $\cP_{n,3}$, conditioned on no loops, is contiguous to the superposition of a random $\rod$ factor and a random perfect matching. Another is that in the same conditional space, the variable $\cfact$ converges to a distribution of the type stated for $W$ given in Theorem~\ref{mainthm-cycle-factor}, but with the product starting at $j=2$ rather than $j=3$. The convergence in that statement and of the $X_i$ to Poisson with means $\lambda_i$ hold jointly. Hence, conditioning on $X_2=0$, i.e., the multigraph has no multiple edges either, we obtain the statement in the theorem (as $\P(X_2=0)$ is bounded away from 0).

\section{The Comb Conjecture via cycle factors in regular graphs}\label{sec-gnp-comb}
In this section we derive the proof of Theorem~\ref{mainthm-comb} using our main result on cycle factors in random cubic graphs, Theorem~\ref{mainthm-cycle-factor}. As a preliminary step we establish the next corollary
on the threshold for a $k$-cycle factor in $\cG(n,p)$.
\begin{corollary}
  \label{cor-cycle-factor-gnp}
Fix $\epsilon > 0$. For any $k,n$ such that $k\mid n$ and $k \geq K_0(n)$ as given in~\eqref{eq-K0},
the random graph $\cG(n,p)$ with $p=(2+\epsilon)\frac{\log n}n$ has a $k$-cycle factor
w.h.p. In particular, for any $k \geq K_0(n)$ dividing $n$, the threshold for the existence of a $k$-cycle factor in $\cG(n,p)$ is at $p\asymp \frac{\log n}n$.
\end{corollary}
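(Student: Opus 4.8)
The plan is to treat the two bounds separately. The lower bound $p\gtrsim\frac{\log n}{n}$ is immediate: if $p\le(1-\epsilon)\frac{\log n}{n}$ then $\cG(n,p)$ has an isolated vertex w.h.p., and such a vertex lies on no cycle, so no $k$-cycle factor exists; together with the upper bound this gives a threshold of order $\frac{\log n}{n}$. For the upper bound it suffices to prove the displayed assertion, that $\cG\big(n,(2+\epsilon)\tfrac{\log n}{n}\big)$ contains a $k$-cycle factor w.h.p. Since ``contains a $k$-cycle factor'' is a monotone increasing graph property, it is enough to produce a coupling under which $\cG(n,p)$ contains, as a spanning subgraph, a random graph $G'$ with $\P(G'\text{ has a }k\text{-cycle factor})\to1$. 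By Theorem~\ref{mainthm-cycle-factor}, $\cG(n,3)$ is such a $G'$; by the contiguity recalled in the introduction the same is true of the superposition of a uniform Hamilton cycle and a uniform perfect matching conditioned on their being edge-disjoint. Thus the task reduces to planting a copy of one of these models inside $\cG(n,p)$.

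To do this I would expose $\cG(n,p)$ in two independent rounds: write $\cG(n,p)\supseteq G_1\cup G_2$ with $G_1,G_2$ independent copies of $\cG(n,q)$, where $q:=\tfrac{(1+\epsilon/3)\log n}{n}$, so that $G_1\cup G_2\sim\cG(n,2q-q^2)$ is stochastically dominated by $\cG(n,p)$. Since $q$ is well above the Hamiltonicity threshold, $G_1$ has a Hamilton cycle $H$ w.h.p.; fix one. Exploiting the exchangeability of $\cG(n,q)$ one may relabel the vertices so that $H$ becomes the canonical cycle $C^\ast=(1,2,\dots,n)$ while $G_2$ remains an independent copy of $\cG(n,q)$. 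It then remains to locate in $G_2$ a perfect matching $M$, edge-disjoint from $C^\ast$, for which the pair $(C^\ast,M)$ has (or is contiguous to) the law of a uniform Hamilton cycle together with a uniform perfect matching conditioned on edge-disjointness. Given this, the $S_n$-symmetry of that model — together with the isomorphism-invariance of the event ``has a $k$-cycle factor'' and the contiguity of the model with $\cG(n,3)$ — forces $C^\ast\cup M$ to contain a $k$-cycle factor w.h.p., which finishes the proof.

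The hard part is exactly this last step. A random graph of density $\Theta(\tfrac{\log n}{n})$ is far too sparse to contain a uniformly random perfect matching of $[n]$ — a fixed matching appears with probability only $q^{n/2}$ — so one cannot read off the desired $M$ from $G_2$, and the matching delivered by a greedy or flow argument in $G_2$ carries a bias that is not obviously negligible for the contiguity transfer. Overcoming this is where the genuine work lies: one must either invoke a coupling embedding a (near-)uniform cubic spanning subgraph inside $\cG(n,p)$ once $p$ is a constant factor above the connectivity threshold — the factor $2$ in the statement being an artifact of splitting the edge budget into a ``Hamilton-cycle round'' and a ``matching round,'' and not expected to be optimal — or generate the cubic structure and the host graph $\cG(n,p)$ simultaneously by a configuration-model-type coupling, using that the minimum degree of $\cG(n,p)$ tends to infinity and hence leaves ample room to route the three regular half-edges at each vertex to random available neighbours, and then transfer the conclusion of Theorem~\ref{mainthm-cycle-factor} across the coupling via contiguity.
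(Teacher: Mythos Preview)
Your outline matches the paper's two-round strategy, but you leave a gap at precisely the point where the argument is simplest, and you then speculate that ``genuine work'' is needed to close it. You consider two ways of obtaining $M$ from $G_2$ --- sample a uniform matching of $K_n$ and hope it lands in $G_2$, or take whatever matching a greedy/flow argument produces --- and correctly reject both. But you overlook the third option: conditional on $G_2$ containing at least one perfect matching (which happens w.h.p.\ since $q>(1+\epsilon/3)\tfrac{\log n}{n}$), choose $M$ \emph{uniformly at random among the perfect matchings of $G_2$}. The joint law of $(G_2,M)$ is invariant under the action of $S_n$ on the vertex set, so the marginal law of $M$ is $S_n$-invariant and therefore exactly uniform over all $(n-1)!!$ perfect matchings of $K_n$. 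This is the very symmetry you already invoked when relabelling so that $H=C^\ast$; applying it once more to the matching finishes the job with no coupling or configuration-model machinery.

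This is exactly what the paper does. From independent $G_1,G_2\sim\cG(n,q)$ it extracts a uniform Hamilton cycle $\mathcal H\subseteq G_1$ and a uniform perfect matching $\mathcal M\subseteq G_2$ by the symmetry trick; independence of $G_1,G_2$ makes $\mathcal H,\mathcal M$ independent. One further point you got slightly wrong: there is no need to condition on edge-disjointness. View $\mathcal H\cup\mathcal M$ as a \emph{multigraph}; this multigraph (which automatically has no loops) is contiguous to $\cP_{n,3}$ conditioned on having no loops, and Theorem~\ref{mainthm-cycle-factor} (whose proof is carried out in $\cP_{n,3}$) then yields a $k$-cycle factor w.h.p. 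Since a $k$-cycle factor is a simple graph and $\mathcal H\cup\mathcal M\subseteq G_1\cup G_2\subseteq\cG(n,p)$, the cycle factor sits inside $\cG(n,p)$. The paper also notes that a softer version with $p=(3+\epsilon)\tfrac{\log n}{n}$ follows even more directly by extracting three independent uniform perfect matchings from three rounds.
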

\begin{proof}
To simplify the exposition, we first give a proof establishing this fact for $p=(3+\epsilon)\frac{\log n}n$. Further note that since the threshold for connectivity in $\cG(n,p)$ is at the edge-probability $p=(1+o(1))\frac{\log n}n$, the above fact is already sufficient for establishing $p\asymp \frac{\log n}n$ as the threshold for a $k$-cycle factor in $\cG(n,p)$.

Fix $\epsilon>0$ and consider the random graphs $G_i \sim \cG(n,p')$ for $i=1,2,3$, where $p'=(1+\epsilon/3)\frac{\log n}n$. Clearly, by stochastic domination, for any given {\em simple} graph $F$ the probability that $G'\sim \cG(n,3p')$ contains $F$ as a subgraph is at least the probability that
$F$ appears as a subgraph of the multigraph $H$ comprised of the union of $G_1,G_2,G_3$.

 Since the threshold for the appearance of a perfect matching in $\cG(n,p)$ is at $p=(1+o(1))\frac{\log n}n$, w.h.p.\ we can extract an independent uniform perfect matching on $M_i$ from each of the $G_i$'s and denote the multigraph formed by the union of $M_1,M_2,M_3$ by $H_0 \subset H$.

By the contiguity results of~\cites{Janson,MRRW}, it is known that the multigraph $H_0$ is contiguous to a uniformly chosen 3-regular multigraph $\cP_{n,3}$ conditioned to have no loops (see~\cite{JLR}*{Theorem 9.40}, as well as~\cite{Wormald} for further information). We can therefore invoke Theorem~\ref{mainthm-cycle-factor} (and the remark following that theorem concerning multigraphs) and gather that $H_0$ contains a $k$-cycle factor with high probability. Carrying this to $G' \sim \cG(n,3p')$ concludes the proof.

To obtain the same result for $p=(2+\epsilon)\frac{\log n}n$, rather than using three uniform independent perfect matchings we instead take a uniform perfect matching $\mathcal{M}$ and an independent uniform Hamilton cycle $\mathcal{H}$, each of which has a threshold of $(1+o(1))\frac{\log n}n$ in $\cG(n,p)$. A delicate point one should note is the following: it is known that a random $3$-regular graph is contiguous to $\mathcal{H}\oplus\mathcal{M}$, the union of $\mathcal{H}$ and $\mathcal{M}$ conditioned on having no self-loops or multiple edges. However, in our case we wish to address the multigraph formed by the union of $\mathcal{H}$ and $\mathcal{M}$ conditioned only to have no self-loops. The fact that this multigraph is contiguous to a random 3-regular multigraph $G \sim \cP_{n,3}$ conditioned on having no self-loops (addressed by our cycle factor results) similarly holds (for example, see the second-to-last conclusion in the proof~\cite{Wormald}*{Theorem~4.5}) and follows from the exact same arguments in the framework of~\cite{FJMRW}, as well as from~\cite{Janson}*{Theorem 3}.
\end{proof}

\begin{proof}[\textbf{\emph{Proof of Theorem~\ref{mainthm-comb}}}]
As already mentioned, a prerequisite for containing the comb as a spanning subgraph is connectivity, whose threshold in $\cG(n,p)$ is at $p = (1+o(1))\frac{\log n}n$; hence, establishing the threshold for the appearance of the comb will readily follow from showing that w.h.p.\ $\cG(n,p)$ contains the comb at $p=(2+\epsilon)\frac{\log n}n$.

First consider $G' \sim \cG(n,p')$ for $p' = c/n$ with some large $c>0$. By the results of Ajtai, Koml{\'o}s and Szemer{\'e}di~\cite{AKS}, as well as de la Vega~\cite{dlVega1}, in this regime w.h.p.\ the random graph contains a path of length $c' n$ for some absolute $c'>0$. In particular, there exists some path $P$ of length $\sqrt{n}$ w.h.p.; let $M$ denote the remaining $m=n-\sqrt{n}$ vertices. The path $P$ will serve as the spine of the comb whereas the vertices of $M$ will produce its $\sqrt{n}$ teeth.

Fix $\epsilon>0$ and consider the random graph $G''\sim \cG(n,(2+\epsilon/2)\frac{\log n}n)$. We first examine the induced subgraph on the vertices of $M$, which is a random graph $\cG(m,p'')$ with $p'' = (2+\epsilon/2-o(1))\frac{\log m}m$. Applying Corollary~\ref{cor-cycle-factor-gnp} for $k = \sqrt{n}-1 = (1-o(1))\sqrt{m}$ we deduce that w.h.p.\ this random graph contains a $k$-cycle factor. In other words, w.h.p.\ we can partition the vertices of $M$ into $m/k=\sqrt{n}$ disjoint $k$-cycles, which we denote by $C_1,\ldots,C_{\sqrt{n}}$.

Observe that in our analysis of $G''$ we have thus far only addressed the edges within the induced subgraph on $M$. Of the remaining edges, every edge between $P$ and $M$ appears in $G''$ independently with probability $p''$. Therefore, the random bipartite graph whose sides are the vertices of $P$ vs.\ the cycles $C_1,\ldots,C_{\sqrt{n}}$, with an edge connecting a vertex $u\in P$ and the cycle $C_i$ iff they are connected in $G''$ as above, has edge probability $1-(1-p'')^k = (2-o(1))\frac{\log \sqrt{n}}{\sqrt{n}}$. By results of Erd\H{o}s and R\'enyi~\cite{ER66} this edge probability is asymptotically twice than the threshold for a perfect bipartite matching in this bipartite random graph; thus, w.h.p.\ we can match each vertex of $P$ to an exclusive $(\sqrt{n}-1)$-cycle.

Unraveling the cycles in the obvious manner now produces the comb (with $P$ as its backbone). Altogether, the comb appears as a spanning subgraph of $\cG(n,p)$ with $p=p'+p''=(2+\epsilon/2+o(1))\frac{\log n}n$, as required.
\end{proof}

\begin{remark}\label{rem-gen-comb}
The analogue of Theorem~\ref{mainthm-comb} holds for generalized combs $\combg_{n,k}$ (in which the spine has $n/k$ vertices, and the teeth are of length $k$) as long as $k \geq K_0(n)$.
Indeed, via the exact same proof, we ultimately seek a perfect matching between $n/k$ vertices (the spine) and $k$-cycles, which exists w.h.p.\ since the edge-probability in this bipartite graph is $(1+o(1))p k \geq (2+o(1))\frac{\log (n/k)}{n/k}$.
\end{remark}

\subsection*{Acknowledgment}
A major part of this work was carried out while the first and third authors were visiting the Theory Group of Microsoft Research, Redmond. They would like to thank the Theory Group for its hospitality and for creating a stimulating research environment.

\begin{bibdiv}
\begin{biblist}

\bib{KS}{article}{
   author={Koml{\'o}s, J{\'a}nos},
   author={Szemer{\'e}di, Endre},
   title={Limit distribution for the existence of Hamiltonian cycles in a random graph},
   journal={Discrete Math.},
   volume={43},
   date={1983},
   number={1},
   pages={55--63},
}

\bib{AKS}{article}{
   author={Ajtai, Mikl{\'o}s},
   author={Koml{\'o}s, J{\'a}nos},
   author={Szemer{\'e}di, Endre},
   title={The longest path in a random graph},
   journal={Combinatorica},
   volume={1},
   date={1981},
   number={1},
   pages={1--12},
}

\bib{AlKrSu}{article}{
   author={Alon, Noga},
   author={Krivelevich, Michael},
   author={Sudakov, Benny},
   title={Embedding nearly-spanning bounded degree trees},
   journal={Combinatorica},
   volume={27},
   date={2007},
   number={6},
   pages={629--644},
}


\bib{BCPS}{article}{
   author={Balogh, J{\'o}zsef},
   author={Csaba, B{\'e}la},
   author={Pei, Martin},
   author={Samotij, Wojciech},
   title={Large bounded degree trees in expanding graphs},
   journal={Electron. J. Combin.},
   volume={17},
   date={2010},
   number={1},
   pages={Research Paper 6, 9},
}

\bib{Baxendale}{article}{
   author={Baxendale, Peter H.},
   title={Renewal theory and computable convergence rates for geometrically
   ergodic Markov chains},
   journal={Ann. Appl. Probab.},
   volume={15},
   date={2005},
   number={1B},
   pages={700--738},
}

\bib{BC}{article}{
   author={Bender, Edward A.},
   author={Canfield, E. Rodney},
   title={The asymptotic number of labeled graphs with given degree
   sequences},
   journal={J. Combinatorial Theory Ser. A},
   volume={24},
   date={1978},
   number={3},
   pages={296--307},
}

\bib{BW}{article}{
   author={Bender, E. A.},
   author={Wormald, N. C.},
   title={Random trees in random graphs},
   journal={Proc. Amer. Math. Soc.},
   volume={103},
   date={1988},
   number={1},
   pages={314--320},
}


\bib{Bollobas1}{article}{
   author={Bollob{\'a}s, B{\'e}la},
   title={A probabilistic proof of an asymptotic formula for the number of labelled regular graphs},
   journal={European J. Combin.},
   volume={1},
   date={1980},
   number={4},
   pages={311--316},
}


\bib{Bollobas-1}{article}{
   author={Bollob{\'a}s, B{\'e}la},
   title={The evolution of sparse graphs},
   conference={
      title={Graph theory and combinatorics},
      address={Cambridge},
      date={1983},
   },
   book={
      publisher={Academic Press},
      place={London},
   },
   date={1984},
   pages={35--57},
}

\bib{Bollobas2}{book}{
  author={Bollob{\'a}s, B{\'e}la},
  title={Random graphs},
  series={Cambridge Studies in Advanced Mathematics},
  volume={73},
  edition={2},
  publisher={Cambridge University Press},
  place={Cambridge},
  date={2001},
  pages={xviii+498},
}



\bib{EFP}{article}{
   author={Erd{\"o}s, P.},
   author={Feller, W.},
   author={Pollard, H.},
   title={A property of power series with positive coefficients},
   journal={Bull. Amer. Math. Soc.},
   volume={55},
   date={1949},
   pages={201--204},
}

\bib{ER66}{article}{
   author={Erd{\H{o}}s, P.},
   author={R{\'e}nyi, A.},
   title={On the existence of a factor of degree one of a connected random
   graph},
   journal={Acta Math. Acad. Sci. Hungar.},
   volume={17},
   date={1966},
   pages={359--368},
   issn={0001-5954},
}

\bib{Feller}{book}{
   author={Feller, William},
   title={An introduction to probability theory and its applications. Vol.
   II. },
   series={Second edition},
   publisher={John Wiley \& Sons Inc.},
   place={New York},
   date={1971},
   pages={xxiv+669},
   review={\MR{0270403 (42 \#5292)}},
}



\bib{FJMRW}{article}{
   author={Frieze, Alan},
   author={Jerrum, Mark},
   author={Molloy, Michael},
   author={Robinson, Robert},
   author={Wormald, Nicholas},
   title={Generating and counting Hamilton cycles in random regular graphs},
   journal={J. Algorithms},
   volume={21},
   date={1996},
   number={1},
   pages={176--198},
}


\bib{Hoeffding}{article}{
   author={Hoeffding, Wassily},
   title={Probability inequalities for sums of bounded random variables},
   journal={J. Amer. Statist. Assoc.},
   volume={58},
   date={1963},
   pages={13--30},
}

\bib{Janson}{article}{
   author={Janson, Svante},
   title={Random regular graphs: asymptotic distributions and contiguity},
   journal={Combin. Probab. Comput.},
   volume={4},
   date={1995},
   number={4},
   pages={369--405},
}

\bib{JLR}{book}{
   author={Janson, Svante},
   author={{\L}uczak, Tomasz},
   author={Rucinski, Andrzej},
   title={Random graphs},
   series={Wiley-Interscience Series in Discrete Mathematics and
   Optimization},
   publisher={Wiley-Interscience, New York},
   date={2000},
   pages={xii+333},
}

\bib{JKV}{article}{
   author={Johansson, Anders},
   author={Kahn, Jeff},
   author={Vu, Van},
   title={Factors in random graphs},
   journal={Random Structures Algorithms},
   volume={33},
   date={2008},
   number={1},
   pages={1--28},
}

\bib{KLW}{article}{
   author={Kahn, Jeff},
   author={Lubetzky, Eyal},
   author={Wormald, Nick},
   title={The threshold for combs in random graphs},
   status={Preprint, available at arXiv:1401.2710.}
}

\bib{Kendall}{article}{
   author={Kendall, David G.},
   title={Unitary dilations of Markov transition operators, and the
   corresponding integral representations for transition-probability
   matrices},
   conference={
      title={Probability and statistics: The Harald Cram\'er volume (edited
      by Ulf Grenander)},
   },
   book={
      publisher={Almqvist \& Wiksell},
      place={Stockholm},
   },
   date={1959},
   pages={139--161},
}

\bib{Krivelevich}{article}{
   author={Krivelevich, Michael},
   title={Embedding spanning trees in random graphs},
   journal={SIAM J. Discrete Math.},
   volume={24},
   date={2010},
   pages={1495--1500},
}

\bib{Lindvall}{book}{
   author={Lindvall, Torgny},
   title={Lectures on the coupling method},
   series={Wiley Series in Probability and Mathematical Statistics:
   Probability and Mathematical Statistics},
   note={A Wiley-Interscience Publication},
   publisher={John Wiley \& Sons Inc.},
   place={New York},
   date={1992},
   pages={xiv+257},
}

\bib{Luczak91}{article}{
   author={{\L}uczak, Tomasz},
   title={Cycles in a random graph near the critical point},
   journal={Random Structures Algorithms},
   volume={2},
   date={1991},
   number={4},
   pages={421--439},
}

\bib{MRRW}{article}{
   author={Molloy, M. S. O.},
   author={Robalewska, H.},
   author={Robinson, R. W.},
   author={Wormald, N. C.},
   title={$1$-factorizations of random regular graphs},
   journal={Random Structures Algorithms},
   volume={10},
   date={1997},
   number={3},
   pages={305--321},
}

\bib{Posa}{article}{
   author={P{\'o}sa, L.},
   title={Hamiltonian circuits in random graphs},
   journal={Discrete Math.},
   volume={14},
   date={1976},
   number={4},
   pages={359--364},
   issn={0012-365X},
}

\bib{RW4}{article}{
   author={Robinson, R. W.},
   author={Wormald, N. C.},
   title={Existence of long cycles in random cubic graphs},
   conference={
      title={Enumeration and Design (D. M. Jackson and S. A. Vanstone, eds.)},
   },
   book={
      publisher={Academic Press},
      place={Toronto},
   },
   date={1984},
   pages={251--270},
}

\bib{RW3}{article}{
   author={Robinson, R. W.},
   author={Wormald, N. C.},
   title={Almost all cubic graphs are Hamiltonian},
   journal={Random Structures Algorithms},
   volume={3},
   date={1992},
   number={2},
   pages={117--125},
}

\bib{RW2}{article}{
   author={Robinson, R. W.},
   author={Wormald, N. C.},
   title={Almost all regular graphs are Hamiltonian},
   journal={Random Structures Algorithms},
   volume={5},
   date={1994},
   number={2},
   pages={363--374},
}


\bib{SF}{book}{
  title={An introduction to the analysis of algorithms},
  author={Sedgewick, Robert},
  author={Flajolet, Philippe},
  date={1996},
  pages={512},
  publisher={Addison-Wesley}
}

\bib{dlVega1}{article}{
 author={Fernandez de la Vega, W.},
 title={Long paths in random graphs},
 journal={Studia Sci. Math. Hungar.},
 volume={14},
 date={1979},
 pages={335--340},
}


\bib{Wilf}{book}{
   author={Wilf, Herbert S.},
   title={generatingfunctionology},
   edition={3},
   publisher={A K Peters Ltd.},
   place={Wellesley, MA},
   date={2006},
   pages={x+245},
}

\bib{Wormald2}{article}{
   author={Wormald, Nicholas C.},
   title={The asymptotic distribution of short cycles in random regular
   graphs},
   journal={J. Combin. Theory Ser. B},
   volume={31},
   date={1981},
   number={2},
   pages={168--182},
}

\bib{Wormald}{article}{
   author={Wormald, N. C.},
   title={Models of random regular graphs},
   conference={
      title={Surveys in combinatorics, 1999 (Canterbury)},
   },
   book={
      series={London Math. Soc. Lecture Note Ser.},
      volume={267},
      publisher={Cambridge Univ. Press},
      place={Cambridge},
   },
   date={1999},
   pages={239--298},
}

\end{biblist}
\end{bibdiv}

\end{document}